\newtheorem{theorem}{Theorem}[section]
\newtheorem{proposition}{Proposition}[section]
\newtheorem{lemma}{Lemma}[section]
\newtheorem{claim}{Claim}[section]
\newtheorem{cor}{Corollary}[section]
\newtheorem*{thm*}{ Theorem}
\newenvironment{defi}{\medskip\noindent{\sc
Definition}. }{\goodbreak\medskip}
\newenvironment{nota}{\medskip\noindent{\sc
Notation}.}{\goodbreak\medskip}
\newenvironment{remk}{\noindent{\sc
Remark}. }{\goodbreak\vskip10pt}
\newenvironment{notas}{\medskip\noindent{\sc
Notations}. }{\goodbreak\medskip}
\newenvironment{exa}{\noindent{\sc
Example}. }{\goodbreak\vskip10pt}
\newenvironment{ques}{\noindent{\sc
Question}. }{\goodbreak\vskip10pt}
\def\cal{\mathcal}
\def\P{{\mathcal P}}
\def\M{{\mathcal M}}
\def\cb{{\mathcal B}}
\def\cq{{\mathcal Q}}
\def\cF{{\mathcal F}}
\def\cw{{\mathcal W}}
\def\cC{{\mathcal C}}
\def\cg{{\mathcal G}}
\def\cl{{\mathcal L}}
\def\cm{{\mathcal M}}
\def\cN{{\mathcal N}}
\def\cR{{\mathcal R}}
\def\cU{{\mathcal U}}
\def\cV{{\mathcal V}}
\def\cw{{\mathcal W}}
\def\cv{{\mathcal V}}
\def\cz{{\mathcal Z}}
\def\cO{{\mathcal O}}
\def\R{\mathbb{R}}
\def\Z{\mathbb{Z}}
\def\N{\mathbb{N}}
\def\C{\mathbb{C}}
\def\T{\mathbb{T}}
\def\dim{{\rm dim}\,}
\def\smallskip{\par\vspace{1mm}}
\def\medskip{\par\vspace{2mm}}
\def\bigskip{\par\vspace{3mm}}
\def\thenumber{0}
\def\eq#1{\global\advance\equationcount by 1
   \def\thenumber{\number\equationcount}
                        {$$#1\eqno(\thenumber)$$}}
\tikzset{
xmin/.store in=\xmin, xmin/.default=-1.5, xmin=-1.5,
xmax/.store in=\xmax, xmax/.default=7.5, xmax=7.55,
ymin/.store in=\ymin, ymin/.default=-0.75, ymin=-0.75,
ymax/.store in=\ymax, ymax/.default=3.25, ymax=3.25,
}
\newcommand\footnoteref[1]{\protected@xdef\@thefnmark{\ref{#1}}\@footnotemark}
\begin{document}

\title[Vanishing Maslov index]{ { Vanishing asymptotic Maslov index  for  conformally  symplectic  flows}}

\author{Marie-Claude Arnaud$^{\dag,\ddag,\circ}$, Anna Florio$^{\ast,\circ}$, Valentine Roos$^{+,\circ}$}

\email{Marie-Claude.Arnaud@imj-prg.fr }
\email{florio@ceremade.dauphine.fr}
\email{valentine.roos@ens-lyon.fr}

\date{}

\keywords{Maslov index, Conformally symplectic flows, twist condition}

\subjclass[2010]{37E10, 37E40,  37J10, 37J30, 37J35}

\thanks{$\dag$ Universit\'e   Paris Cit\'e  and Sorbonne Universit\'e, CNRS, IMJ-PRG, F-75006 Paris, France } 
\thanks{$\ddag$ Member of the {\sl Institut universitaire de France.}}
\thanks{$\circ$ ANR AAPG 2021 PRC CoSyDy: Conformally symplectic dynamics, beyond symplectic dynamics.ANR-CE40-0014}
\thanks{$\ast$ Universit\'e Paris Dauphine - Ceremade
UMR 7534
Place du M$^{\mathrm{al}}$ De Lattre De Tassigny
75775 PARIS Cedex 16, France. }
\thanks{$+$ \'Ecole Normale Sup\'erieure de Lyon, UMPA UMR 5669
46 all\'e d'Italie, 69364 Lyon Cedex 07, France.}

\begin{abstract}  {Motivated by Mather theory of minimizing measures for symplectic twist dynamics, we study conformally symplectic flows on a cotangent bundle. These dynamics} are the most general dynamics for which it makes sense to look at (asymptotic) dynamical Maslov index. {Our main result is} the existence of invariant measures with vanishing index without any convexity hypothesis, in the  general framework of conformally symplectic flows. A degenerate twist-condition hypothesis  implies the existence of \textit{ergodic} invariant measures with zero dynamical Maslov index {and thus the existence of points with zero dynamical Maslov index.}

\end{abstract}

\maketitle
 \section{Introduction and Main Results.}\label{SecIntro}

This study  mainly  concerns   conformally symplectic  flows that are defined  on the cotangent  bundle ${\cal M}=T^*M$ of a closed manifold $M$, where  ${\cal M}$ is endowed with its tautological 1-form $\lambda$, its symplectic form $\omega=-d\lambda$ and we denote by $\pi:T^*M\rightarrow M$ the usual projection.

Symplectic dynamics have been intensively studied because they model conservative phenomena, but a lot of phenomena are dissipative, e.g. mechanical systems with friction. Some of these dissipative dynamics are conformally symplectic~:  
a diffeomorphism $f :  {\cal M}\righttoleftarrow$ is conformally symplectic if for some constant $a$, we have $f^*\omega=a\omega$. When $a=1$, the diffeomorphism is symplectic.  A complete vector field $X$ on $\cal M$ is conformally symplectic if $L_X\omega=\alpha\omega$, where $L_X$ is the Lie derivative, for some $\alpha\in\R$. 

{ When $\dim M\geq 2$ and $M$ is connected, we have also the following characterization of conformally symplectic dynamics of ${\cal M}$ : a diffeomorphism $f :  {\cal M}\righttoleftarrow$ is conformally symplectic if and only if the image by $Df$ of any Lagrangian subspace in $T\cal M$ is Lagrangian. The existence of a conformal factor at every point is a result of \cite{LiveraniWojtkowski1998} and the independence of this factor from the point is a result of \cite{Libermann1959}.\\
}

 In the symplectic setting, an inspiring example  is the completely integrable Hamiltonian case. Then the manifold  is   foliated by  invariant  Lagrangian graphs. This example is of course very specific. \\
 However,  several authors  found some traces of  integrability in many non integrable cases.  Aubry-Mather  theory in the  case of exact symplectic twist maps and its vast extension  by Ma\~n\'e and Mather  to the case of Tonelli  Hamiltonian systems are such  results.\\
  In both settings, the method is variational and the  ``ghosts"  of invariant submanifolds are filled  by minimizing orbits. A cotangent bundle has a  natural  Lagrangian foliation given by its vertical fiber  and a feature of the minimizing orbits is that they have vanishing Maslov index with respect to this foliation. \\
Here,  in a more general  setting, our goal  is to prove the existence  of a large set of points with vanishing dynamical Maslov index. We  recall  that the Maslov index  ${\rm MI}(\Gamma)$ of a piece of  arc of Lagrangian subspaces $\Gamma=(\Gamma_t)_{t\in I}$ of $T{\cal M}$  is the algebraic  number of intersection of this arc with the Maslov singular cycle of the vertical foliation, i.e. the Maslov index gives more or less  the number of times when the arc is non transverse to the vertical foliation. See Subsection \ref{subsec defi MI}. The dynamical Maslov index of a Lagrangian subspace $L$ of $T{\cal M}$ for some time interval $I$ and some flow $(\phi_t)$ whose differential preserves Lagrangian subspaces, which is denoted by  ${\rm DMI}(L, (\phi_t)_{t\in I})$, is then the  Maslov index ${\rm MI}((D\phi_t(L))_{t\in I})$. The precise definitions are given in Section \ref{section def MI}.


{We begin with a preliminary statement, that is the key result  for finding invariant measures with vanishing asymptotic Maslov index.}

\begin{theorem}\label{Tppal}
 Let $\cl\subset {\cal M}$ be a Lagrangian graph.
 Let $(\phi_t)$ be an isotopy  of  conformally symplectic diffeomorphisms  of ${\cal M}$ such that $\phi_0={\rm Id}_{{\cal M}}$. Then there exists a smooth closed 1-form $\eta :M\rightarrow {\cal M}$ and a Lipschitz function $u:M\rightarrow \R$ that is $C^1$ on an open subset $U\subset M$ of full Lebesgue measure such that
$$ \forall q\in U, p:=\phi^{-1}_1(\eta(q)+du(q))\in \cl\quad\text{and}\quad {\text{\rm DMI}\Big(T_{p}\cl,(\phi_s)_{s\in[0,1]}
	\Big)=0}.$$

\end{theorem}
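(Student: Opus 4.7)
The strategy is to construct the pair $(\eta,u)$ as a \emph{graph selector} for the Lagrangian submanifold $\cl':=\phi_1(\cl)$, built from a Chaperon--Sikorav discrete action associated with the isotopy $(\phi_t)$ and adapted to the conformally symplectic setting. Since each $\phi_t$ preserves Lagrangian subspaces (by the characterization recalled in the introduction), $\cl'$ is a Lagrangian submanifold of $({\cal M},\omega)$; I take $\eta$ to be a smooth closed 1-form on $M$ representing the cohomology class of $\lambda|_{\cl'}$ (equivalently, the class of $\sigma$ if $\cl={\rm graph}\,\sigma$, suitably twisted by the isotopy).

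\textbf{Step 1 (discrete action).} Subdivide $[0,1]$ into pieces $[t_i,t_{i+1}]$, $i=0,\dots,N-1$, so fine that each $\psi_i:=\phi_{t_{i+1}}\circ \phi_{t_i}^{-1}$ is $C^1$-close to the identity and admits a generating function $F_i:M\times M\to\R$; the conformal factors $a_i=\exp\int_{t_i}^{t_{i+1}}\alpha\,ds$ enter as scalar rescalings of the defining relations $(q_i,-D_1F_i(q_i,q_{i+1}))\mapsto(q_{i+1},a_i D_2F_i(q_i,q_{i+1}))$. Writing $\cl=\{(q,\sigma(q))\}$ and letting $v$ be a primitive of $\sigma-\eta$ on $M$, set
\begin{equation*}
S(q_0,\dots,q_{N-1};q):=v(q_0)+\sum_{i=0}^{N-1}F_i(q_i,q_{i+1}),\qquad q_N:=q.
\end{equation*}
Critical points of $S$ in the interior variables $(q_0,\dots,q_{N-1})$ at fixed endpoint $q$ are in bijection with preimages $p\in\cl$ of points of $\cl'$ lying over $q$; moreover $D_qS$ at such a critical point equals the corresponding $\cl'$-covector minus $\eta(q)$.

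\textbf{Step 2 (selector and regularity).} Define
\begin{equation*}
u(q):=\min_{(q_0,\dots,q_{N-1})\in M^{N}}S(q_0,\dots,q_{N-1};q).
\end{equation*}
Compactness of $M$ and joint smoothness of $S$ guarantee that the minimum is attained and that $u$ is Lipschitz. By Rademacher $u$ is differentiable a.e.; a standard genericity argument shows that the set $U\subset M$ on which the minimum is attained at a \emph{unique non-degenerate} critical point is open and of full Lebesgue measure. On $U$ the envelope theorem yields $du(q)=D_qS$ at the minimizer, so $\eta(q)+du(q)\in\cl'$ and $p:=\phi_1^{-1}(\eta(q)+du(q))\in\cl$, giving the first conclusion.

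\textbf{Step 3 (vanishing DMI, and main obstacle).} At $q\in U$ the Hessian of $S$ in the interior variables at the minimum is \emph{positive definite}; it is block-tridiagonal with blocks built from the $D^2F_i$ (rescaled by $a_i$), and encodes the discrete Jacobi equation along the orbit $p,\phi_{t_1}(p),\dots,\phi_1(p)$. I plan to invoke a Maslov--Morse correspondence à la Robbin--Salamon/Viterbo: the Morse index of $S$ at such a non-degenerate critical point equals $\text{\rm DMI}(T_p\cl,(\phi_s)_{s\in[0,1]})$ after a normalization that vanishes because $\cl$ is a graph (so $T_p\cl$ is transverse to the vertical at $s=0$) and the subdivision is fine enough to preclude ``crossings at infinity''. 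A Morse index of $0$ then gives $\text{\rm DMI}=0$; geometrically, positive definiteness of the Hessian means there are no discrete conjugate points along the orbit, so $(D\phi_s(T_p\cl))_{s\in[0,1]}$ never signed-crosses the Maslov cycle of the vertical. The principal obstacle is precisely this last point: writing down the Maslov--Morse dictionary in the conformally symplectic case and verifying that the scalings $a_i$ cancel so that the normalization constant is \emph{exactly} $0$ rather than a nontrivial integer shift. A secondary technical issue is obtaining openness of $U$ (not merely full measure), which should follow from upper semi-continuity of the minimum set and genericity of Morse behavior for $S$.
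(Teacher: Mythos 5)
Your strategy is in the same family as the paper's (generating-function/selector methods, a Maslov--Morse dictionary), but there are two serious gaps, one technical and one structural.

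\textbf{Technical gap: generating functions on $M\times M$.} A map $\psi_i$ that is $C^1$-close to the identity does \emph{not} admit a classical position-type generating function $F_i:M\times M\to\R$ (with $p_i=-D_1F_i,\ p_{i+1}=D_2F_i$). The obstruction is exactly that close-to-identity maps are not twist maps: the graph of $\psi_i$ in $T^*M\times T^*M\cong T^*(M\times M)$ projects onto (a neighborhood of) the diagonal and is therefore not a graph over $M\times M$. The correct Chaperon--Sikorav construction uses a generating function of mixed type $S(q,P)$ on $T^*M$ for each short time step, which is then composed into a generating family quadratic at infinity on $M\times \R^K$. This is what the paper uses (via exactification of $\phi_t$ into $\psi_t$ so that $\psi_t(\cl_0)$ is H-isotopic to the zero section and the Sikorav/Chaperon existence theorem applies). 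Your discrete action on $M^N\times M$ does not exist as written, and the nice compactness that justifies taking the minimum disappears once you work on the correct non-compact fiber $\R^K$.

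\textbf{Structural gap: normalizing the Maslov index.} You correctly flag the "normalization constant" as the principal obstacle, but the issue is worse than a bookkeeping nuisance and a direct Morse $\Rightarrow$ Maslov argument will not close it. The Morse--Maslov correspondence (the paper's Theorem \ref{Tmain}, proved via symplectic reduction à la Viterbo) is inherently \emph{relative}: the Maslov index of a path in $\cl'$ joining two nondegenerate fiber-critical points equals the \emph{difference} of fiber Morse indices at the endpoints. This immediately gives Proposition \ref{Cselecindconst} (Maslov index along $\cl'$ between two selector points is zero), but it says nothing a priori about the \emph{absolute} value of $\mathrm{DMI}(T_p\cl,(\phi_s)_{s\in[0,1]})$. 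In particular, without a twist hypothesis (which Theorem \ref{Tppal} does not assume), "Hessian positive definite $\Rightarrow$ no signed crossings of the Maslov cycle" is false: the path $(D\phi_s T_p\cl)_{s\in[0,1]}$ can have cancelling positive and negative crossings that leave no trace in the endpoint Morse index. The paper's actual argument supplies the missing normalization in two steps: Lemma \ref{LDMIgraphselec} shows, via a four-segment loop homotopy and Proposition \ref{Cselecindconst}, that $\mathrm{DMI}$ is the same constant $n_t$ at every selector point of $\psi_t(\cl_0)$; Lemma \ref{Lncont} (using joint continuity of $(q,t)\mapsto u_t(q)$ on the open set $\cU$ from Proposition \ref{tUouvert}) shows $t\mapsto n_t$ is locally constant; then $n_1=n_0=0$. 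Your proposal contains neither the homotopy step nor the time-continuity step, so the claim $\mathrm{DMI}=0$ is not established.

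A smaller point: the selector $u$ in the paper is the cohomological minimax selector attached to the GFQI, not a literal minimum of $S$; taking a minimum implicitly presumes a GFQI of index $0$, which is not what the construction produces in general.
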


This theorem has important consequences concerning the so-called asymptotic Maslov index.

\begin{defi}  Let 
$(\phi_t)
$ be an isotopy  of  conformally symplectic diffeomorphisms  of ${\cal M}$ such that $\phi_0={\rm Id}_{\cal M}$. 
\begin{enumerate} 
\item Let $L\subset T{\cal M}$ be a Lagrangian subspace that is transverse to the vertical foliation. 
Whenever the limit exists, the {asymptotic Maslov index}  of $L$ for  $(\phi_t)$ is
 		\[
 		\text{DMI}_{\infty}(L, (\phi_t)
 		):=\lim_{t\to+\infty}\dfrac{\text{DMI}(L,(\phi_s)_{s\in[0,t]})}{t}.
 		\]
We will prove {\color{black}(see Corollary \ref{no subspace})} that if $L, L'\subset T_x{\cal M}$ then $$\text{DMI}_{\infty}(L, (\phi_t)
)=\text{DMI}_{\infty}(L', (\phi_t)
).$$
This allows us to introduce the following.

\item Let $(\phi_t)
$ be an isotopy  of  conformally symplectic diffeomorphisms  of ${\cal M}$ such that $\phi_0={\rm Id}_{\cal M}$. Let $x\in {\cal M}$. Then the dynamical asymptotic Maslov index at $x$ for $(\phi_t)
$ is denoted by $\text{DMI}_\infty(x, (\phi_t)
)$ and is the asymptotic Maslov index of $L$ for every Lagrangian subspace $L$ of $T_x{\cal M}$. 
\end{enumerate}
\end{defi}

The definition of (asymptotic) dynamical Maslov index first appears in the work of Ruelle \cite{Ruelle1985}: the author introduced the notion of \textit{rotation number} for surface diffeomorphisms { that are isotopic to identity} and for $3$-dimensional flows, and generalized this to symplectic dynamics. {He proves that if $(\phi_t)$ is an  isotopy such that $\phi_0={\rm Id}_{\cal M}$ and $\phi_{t+1}=\phi_t\circ \phi_1$, then for every probability measure $\mu$  invariant  by $\phi_1$ with compact support,   $\text{DMI}_\infty(x, (\phi_t)
)$ exists at $\mu$-almost every point and $x\mapsto \text{DMI}_\infty(x, (\phi_t)
)$ is a measurable and bounded function. Hence he defines
 the asymptotic Maslov index of such a measure.

\begin{defi}\label{def DMI infty measure}
Let $(\phi_t)
$ be a conformally symplectic isotopy of ${\cal M}$ such that $\phi_0=\mathrm{Id}_{\cal M}$ and $\phi_{t+1}=\phi_t\circ\phi_1$. Let $\mu$ be a $\phi_1$-invariant probability measure with compact  support. Then, the asymptotic Maslov index of $\mu$ for $(\phi_t)
$ is
\[
\mathrm{DMI}(\mu, (\phi_t)
):=\int_{\cal M}\text{DMI}_\infty(x, (\phi_t)
)\, d\mu(x).
\]
\end{defi}
If $\mu$ is a $\phi_1$-invariant \textit{ergodic} measure with compact support, then for $\mu$-almost every $x\in\M$ it holds
     \[
     \mathrm{DMI}_\infty(x,(\phi_t))=\mathrm{DMI}(\mu,(\phi_t))\, .
     \]

We will present in Proposition \ref{MI existence SCH} a proof of these results   that is a consequence of a result of Schwartzman, \cite{Schwartzman1957}.}

 
 {Our first corollary gives the existence of invariant probability measures with vanishing asymptotic Maslov index. A priori, this doesn't implies the existence of points with vanishing dynamical Maslov index.}

\begin{cor}\label{cormeasurebis}
Let $(\phi_t)
$ be a conformally symplectic isotopy of ${\cal M}$ such that $\phi_0=\mathrm{Id}_{\cal M}$ and $\phi_{t+1}=\phi_t\circ\phi_1$.
Let $\cl\subset {\cal M} $ be a Lagrangian submanifold   that is {H-isotopic}\footnote{A H-isotopy is a Hamiltonian isotopy.} to a graph and such that $\displaystyle{\bigcup_{t\in[0, +\infty)}\phi_t(\cl)}$ is relatively compact. Then there exists at least one $\phi_1$-invariant probability measure $\mu$ whose asymptotic Maslov index is zero and whose  support is in {
$$\displaystyle{\bigcap_{T\in [0, +\infty)}\overline{\bigcup_{t\in[T, +\infty)}\phi_t(\cl)}}.$$}
Moreover, if $(\phi_t)$ is a flow, then $\mu$ can be chosen $(\phi_t)$ invariant.
\end{cor}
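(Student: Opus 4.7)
My plan is to apply Theorem~\ref{Tppal} repeatedly on ever longer time windows, and then extract an invariant measure by a Krylov--Bogolyubov-type averaging, performed on the Lagrangian Grassmannian bundle $\Lambda(\cm)\to\cm$ so that the Maslov information is carried along with the points. I first reduce to the case where $\cl$ is itself a Lagrangian graph: since $\cl$ is H-isotopic to some graph $\cl_0$, fix a Hamiltonian isotopy $(\psi_t)_{t\in[0,1]}$ with $\psi_0=\mathrm{Id}$ and $\psi_1(\cl_0)=\cl$. For each $n\geq 1$, concatenate $(\psi_t)_{t\in[0,1]}$ with $(\phi_{n(t-1)}\circ\psi_1)_{t\in[1,2]}$ and reparametrize to $[0,1]$ to obtain a conformally symplectic isotopy $\Phi^{(n)}$ starting at $\mathrm{Id}$ whose time-one map is $\phi_n\circ\psi_1$. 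Applying Theorem~\ref{Tppal} to $\Phi^{(n)}$ and $\cl_0$ yields a point $p_n\in\cl_0$ with $\mathrm{DMI}(T_{p_n}\cl_0,\Phi^{(n)})=0$. Writing $q_n:=\psi_1(p_n)\in\cl$ and $L_n:=D\psi_1(T_{p_n}\cl_0)=T_{q_n}\cl$, the cocycle property of the Maslov index, together with the fact that the $\psi$-contribution is uniformly bounded by a constant $C=C(\psi)$ independent of $n$, gives
\[
\bigl|\mathrm{DMI}(L_n,(\phi_s)_{s\in[0,n]})\bigr|\leq C.
\]

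Next I lift to the Lagrangian Grassmannian bundle via $\tilde\phi_1(x,L):=(\phi_1(x),D\phi_1(L))$ and form the probability measures
\[
\tilde\mu_n:=\frac{1}{n}\sum_{k=0}^{n-1}\delta_{\tilde\phi_1^{k}(q_n,L_n)}.
\]
Since $\bigcup_{t\geq 0}\phi_t(\cl)$ is relatively compact in $\cm$ and the fibers of $\Lambda(\cm)$ are compact, the supports of the $\tilde\mu_n$'s lie in a common compact subset of $\Lambda(\cm)$. Extract a weak-$*$ limit $\tilde\mu_{n_k}\to\tilde\mu$. Because $\|(\tilde\phi_1)_*\tilde\mu_n-\tilde\mu_n\|\leq 2/n$, the measure $\tilde\mu$ is $\tilde\phi_1$-invariant, so its projection $\mu:=\pi_*\tilde\mu$ is $\phi_1$-invariant. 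Moreover, for every fixed $T\geq 0$ the mass of $\tilde\mu_n$ sitting over $\bigcup_{k<T}\phi_k(\cl)$ is at most $T/n\to 0$, so $\mu$ is supported in $\bigcap_{T\geq 0}\overline{\bigcup_{t\geq T}\phi_t(\cl)}$.

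It remains to show $\mathrm{DMI}(\mu,(\phi_t))=0$, which is the most delicate step. Proposition~\ref{MI existence SCH} (the Schwartzman--Ruelle machinery) represents $\mathrm{DMI}_\infty$ as the Birkhoff average under $\tilde\phi_1$ of a bounded measurable ``Maslov generator'' $g\colon\Lambda(\cm)\to\R$ satisfying $g(x,L)=\mathrm{DMI}(L,(\phi_s)_{s\in[0,1]})$ up to a bounded correction, and satisfying $\int g\,d\tilde\nu=\mathrm{DMI}(\pi_*\tilde\nu,(\phi_t))$ for every $\tilde\phi_1$-invariant probability $\tilde\nu$ with compact support. The cocycle property of the Maslov index then gives
\[
\int g\,d\tilde\mu_n=\frac{1}{n}\mathrm{DMI}(L_n,(\phi_s)_{s\in[0,n]})+O(1/n)=O(1/n).
\]
The obstacle is to pass this estimate to the weak-$*$ limit $\tilde\mu$ in spite of the discontinuity of $g$. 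I expect to handle this by writing $g=\tilde g+\delta\circ\tilde\phi_1-\delta$, where $\tilde g$ is a continuous density on $\Lambda(\cm)$ and $\delta$ a bounded measurable angle function absorbing the jumps: the coboundary part integrates to zero against any $\tilde\phi_1$-invariant measure, and $\int\tilde g\,d\tilde\mu_{n_k}\to\int\tilde g\,d\tilde\mu$ by weak-$*$ convergence. Combining these identities yields $\int g\,d\tilde\mu=0$, hence $\mathrm{DMI}(\mu,(\phi_t))=0$.

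For the flow case, set $\mu':=\int_0^1(\phi_s)_*\mu\,ds$. A direct Fubini computation shows $\mu'$ is $(\phi_t)$-invariant, its support is still contained in $\bigcap_{T\geq 0}\overline{\bigcup_{t\geq T}\phi_t(\cl)}$ by invariance of this set under each $\phi_s$, and since $x\mapsto\mathrm{DMI}_\infty(x,(\phi_t))$ is a $(\phi_t)$-invariant function its integral against $\mu'$ equals its integral against $\mu$, which is zero.
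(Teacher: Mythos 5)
Your proposal follows essentially the same skeleton as the paper's proof: reduce to a graph by a Hamiltonian isotopy, apply Theorem~\ref{Tppal} to produce points $z_n$ with uniformly bounded $\mathrm{DMI}$ on $[0,n]$, lift to the Lagrangian Grassmannian bundle, average by Krylov--Bogolyubov, and pass to a weak-$*$ limit. The support argument and the flow case are handled correctly.

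The genuine gap sits precisely where you flag it yourself. Your entire argument hinges on the claim that the bounded measurable ``Maslov generator'' $g$ admits a decomposition $g=\tilde g+\delta\circ\tilde\phi_1-\delta$ with $\tilde g$ \emph{continuous} on $\Lambda(\cm)$ and $\delta$ bounded measurable. You present this as something you ``expect'' to be able to do, but you neither construct $\tilde g$ and $\delta$ nor cite a result that would provide them. Without this, the key step fails: the naive generator $g(x,L)=\mathrm{DMI}(L,(\phi_s)_{s\in[0,1]})$ is an integer-valued jump function that is not even everywhere defined (it requires transversality of the endpoint Lagrangians to the vertical), so one cannot pass $\int g\,d\tilde\mu_{n_k}\to\int g\,d\tilde\mu$ under weak-$*$ convergence, and the Birkhoff identity you invoke from Proposition~\ref{MI existence SCH} does not by itself give the weak-$*$ limit statement. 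The same issue also affects, more mildly, your earlier uniform bound $\lvert\mathrm{DMI}(L_n,(\phi_s)_{s\in[0,n]})\rvert\le C$ via the cocycle property: the integer Maslov index is not a strict cocycle, and the uniform boundedness of the ``$\psi$-contribution'' over the compact graph $\cl_0$ is not automatic for a discontinuous function. The paper's resolution is exactly the machinery you would need: the \emph{angular} Maslov index $\mathrm{D}\alpha\mathrm{MI}$ introduced in Section~\ref{angular MI}. Proposition~\ref{lemma relation MI et AMI} shows that $\alpha\mathrm{MI}$ differs from $\mathrm{MI}$ by the difference of sums of angles at the endpoints, Corollary~\ref{comparison DAMI et DMI} bounds $\lvert\mathrm{D}\alpha\mathrm{MI}-\mathrm{DMI}\rvert<d$, and $L\mapsto\mathrm{D}\alpha\mathrm{MI}(L,(\phi_t)_{t\in[0,1]})$ is continuous and satisfies the exact cocycle property. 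This is literally the $\tilde g$ (the continuous part) and $\delta$ (the bounded angle function, namely $\frac1\pi\sum_j\theta_j^{JV,L}$) that you postulated. If you made these citations explicit and used $\mathrm{D}\alpha\mathrm{MI}$ directly in place of $g$ — as the paper does in both the discrete and flow cases, via Birkhoff's theorem for the invariant lifted measure $\bar\mu$ — the argument would close.
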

This result applies in the autonomous conservative Tonelli case --where the Hamiltonian is a proper first integral--  or in the discounted autonomous case -- where there is a proper Lyapunov function defined in  the complement on some compact subset--.\\

As $\T^{2d}$ can be obtained as the quotient of $T^*\T^d$ by a discrete group of transformations, we obtain also a result for $\T^{2d}$. In the following statement, { the leaves of the reference Lagrangian foliation are the $d$-dimensional Lagrangian tori $\{0\}\times \T^d$. 
}

\begin{cor}\label{measuretorus}
Let $(\phi_t)
$ be a   symplectic isotopy of $\T^{2d}$ such that $\phi_0=\mathrm{Id}_{\T^{2d}}$ and $\phi_{t+1}=\phi_t\circ\phi_1$. Then, there exists at least one $\phi_1$-invariant probability measure $\mu$ whose asymptotic Maslov index is zero.
\end{cor}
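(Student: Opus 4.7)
The plan is to reduce to Theorem~\ref{Tppal} on the standard cover of $\T^{2d}$ by $T^*\T^d$, then to build empirical measures on the compact $\T^{2d}$, using its compactness in place of the relative-compactness hypothesis of Corollary~\ref{cormeasurebis}. First identify $\T^{2d}\cong T^*\T^d/\Z^d$, with $\Z^d$ acting on $T^*\T^d=\R^d\times\T^d$ by $(p,q)\mapsto(p+k,q)$; this action preserves the canonical symplectic form, so the covering $\pi:T^*\T^d\to\T^{2d}$ is a local symplectomorphism sending the vertical foliation of $T^*\T^d$ to that of $\T^{2d}$. Since $\phi_0=\mathrm{Id}$, the isotopy $(\phi_t)$ lifts uniquely to a symplectic isotopy $(\tilde\phi_t)$ on $T^*\T^d$ with $\tilde\phi_0=\mathrm{Id}$; uniqueness together with $\phi_{t+1}=\phi_t\circ\phi_1$ yields $\tilde\phi_{t+1}=\tilde\phi_t\circ\tilde\phi_1$. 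Because $\pi$ is a local symplectomorphism preserving the vertical foliation, the Maslov index of any Lagrangian path upstairs equals the Maslov index of its projection downstairs.

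Let $\tilde\cl:=\{0\}\times\T^d\subset T^*\T^d$ be the zero section (a Lagrangian graph) and $\cl_0:=\pi(\tilde\cl)\subset\T^{2d}$. For each integer $n\geq 1$, apply Theorem~\ref{Tppal} to the reparameterized isotopy $(\tilde\phi_{ns})_{s\in[0,1]}$ and to $\tilde\cl$: by reparameterization invariance of the Maslov index this yields a point $\tilde p_n\in\tilde\cl$ satisfying $\mathrm{DMI}(T_{\tilde p_n}\tilde\cl,(\tilde\phi_s)_{s\in[0,n]})=0$, and hence $\bar p_n:=\pi(\tilde p_n)\in\cl_0$ satisfies $\mathrm{DMI}(T_{\bar p_n}\cl_0,(\phi_s)_{s\in[0,n]})=0$. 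Form the probability measures $\mu_n:=\tfrac{1}{n}\int_0^n(\phi_s)_*\delta_{\bar p_n}\,ds$ on the compact manifold $\T^{2d}$; by Prokhorov a subsequence converges weakly to some probability measure $\mu$, and a standard Krylov--Bogolyubov argument (using $\|(\phi_1)_*\mu_n-\mu_n\|\leq 2/n$) shows $\mu$ is $\phi_1$-invariant.

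To conclude $\mathrm{DMI}(\mu,(\phi_t))=0$ I would invoke the Schwartzman--Ruelle framework of Proposition~\ref{MI existence SCH}: the asymptotic Maslov index is the $\mu$-average of a rotation class of an integer-valued Maslov cocycle, and this rotation class evaluated on $\mu_n$ is exactly $\tfrac{1}{n}\mathrm{DMI}(T_{\bar p_n}\cl_0,(\phi_s)_{s\in[0,n]})=0$. The \textbf{main obstacle} is precisely this last step: passing the vanishing of the Maslov index through the weak-$*$ limit. Since the asymptotic Maslov index is a Birkhoff average of an integer-valued cocycle, it is not a priori continuous in $\mu$, and one must exploit the cocycle structure (rather than the pointwise definition of $\mathrm{DMI}_\infty$). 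This is exactly the role played by the relative-compactness assumption in Corollary~\ref{cormeasurebis}; on the compact manifold $\T^{2d}$ that compactness is automatic, and the same Schwartzman-type machinery used there should carry over verbatim.
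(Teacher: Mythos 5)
Your lift-to-the-cover strategy, invocation of Theorem~\ref{Tppal} on the zero section, and Krylov--Bogolyubov construction all mirror the paper's argument, so the architecture is right. But the ``main obstacle'' you flag at the end is a genuine gap, and the way you set up the construction makes it hard to close: you form the empirical measures $\mu_n$ directly on $\T^{2d}$, which discards exactly the information (the Lagrangian frame) that the Maslov cocycle lives on. Once that data is gone, there is no evident way to evaluate a ``rotation class'' of the Maslov cocycle against a weak-$*$ limit of measures on the base.

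The paper's fix is precisely to keep the frame: form the empirical measures on the Lagrangian Grassmannian, $\mu_n := \tfrac{1}{n}\sum_{i=0}^{n-1}\delta_{D\phi_i(U_n)}$ with $U_n = D\Pi(T_{u_n}\cz_0)\in\Lambda(\T^{2d})$, extract a $D\phi_1$-invariant weak-$*$ limit $\bar\mu$ on $\Lambda(\T^{2d})$, and only then push forward to $\mu=p_*\bar\mu$. The passage to the limit is then handled exactly as in Corollary~\ref{cormeasurebis}: by Corollary~\ref{comparison DAMI et DMI} one replaces the integer-valued $\mathrm{DMI}_\infty$ by the angular index $\mathrm{D}\alpha\mathrm{MI}_\infty$; since $\mathrm{D}\alpha\mathrm{MI}_\infty$ is independent of the Lagrangian subspace, its $\mu$-average equals its $\bar\mu$-average over $\Lambda(\T^{2d})$; Birkhoff's ergodic theorem for $D\phi_1$ on $(\Lambda(\T^{2d}),\bar\mu)$ reduces the asymptotic index to the one-step function $L\mapsto\mathrm{D}\alpha\mathrm{MI}(L,(\phi_t)_{t\in[0,1]})$, which (unlike the integer-valued Maslov index) is \emph{continuous} in $L$; and continuity is what lets weak-$*$ convergence of $\mu_{n_k}\to\bar\mu$ give the value $0$. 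So the missing ingredient is not ``more Schwartzman machinery'' but the choice to carry the measure on $\Lambda(\T^{2d})$ so that the angular index is a bounded continuous test function. A minor secondary remark: since the hypothesis is $\phi_{t+1}=\phi_t\circ\phi_1$ (an isotopy, not a flow), the discrete Birkhoff sums $\tfrac{1}{n}\sum_{i=0}^{n-1}\delta_{D\phi_i(\cdot)}$ are the natural empirical measures here rather than the time integral you wrote.
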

In the latter corollaries, we cannot ensure that the measure is ergodic and then we don't know if there is at least one point with zero asymptotic Maslov index. Now we will give sufficient conditions to obtain such ergodic measures and such points.
{\color{black}
\begin{defi}
A Darboux  chart $F=(F_1, F_2):{\cal U}{ \,\subset\cal M} \rightarrow \R^d\times \R^d$ is {\sl vertically foliated} if
\begin{itemize}
\item its image is a product  $I^d\times J^d$ where $I$ and $J$ are two intervals of $\R$;
\item $\forall x\in {\cal U}, F(T^*_{\pi(x)}M\cap {\cal U})=F_1(x)\times J^d$.
\end{itemize}
\end{defi}

}

\begin{defi}\label{def-twist}
		An isotopy $(\phi_t)$ of conformally symplectic diffeomorphisms of    $\cal M$ twists the vertical if at every point $(t_0, x_0)\in \R\times \cal M$, there exists 
\begin{itemize}
\item $\varepsilon>0$;
\item a vertically foliated chart $F=(F_1, F_2): {\cal U}\rightarrow\R^{2d}$   such that $x_0\in\cal U$,  $F(\cal U)=(-a, a)^{d}\times (-a, a)^d$ and $F(x_0)=0$
\end{itemize}
that satisfy for all $t\in (t_0-\varepsilon, t_0+\varepsilon)$
\begin{itemize}
\item 
${\cal G}_t:=(\phi_t\circ\phi_{t_0}^{-1})\big(F^{-1}(\{ 0_{\R^d}\}\times (-\frac{a}{2}, \frac{a}{2})^d)\big) \subset{\cal U}$;
\item $F({\cal G_t})$ is the graph of a function $p\mapsto q=dg_t(p)$ where 
\begin{enumerate}
\item for $t\in [t_0, t_0+\varepsilon)$, $g_t$ is a convex function;
\item for $t\in (t_0-\varepsilon, t_0]$, $g_t$ is a concave function\footnote{We don't assume the strict concavity or convexity.}.
\end{enumerate}

\end{itemize}
\end{defi}
\begin{exa}  Assume that $a:I\rightarrow \R$ and $H:I\times \cal M\rightarrow \R$ are smooth functions and let us use the notation $H_t(x)=H(t, x)$. We assume  that the Hessian of $H$ restricted to every vertical fiber is positive definite\footnote{Observe that such a fiber is a linear space, hence the Hessian has an intrinsic meaning at every point.}. We define the time-dependent vector field $X_t$ of $\cal M$ by
$$i_{X_t}\omega=dH_t-a(t)\lambda.$$
Then the isotopy defined by $X_t$ is conformally symplectic and twists the vertical, see Proposition \ref{PHconvextwist}.
A subclass of examples is the class of discounted Tonelli flows, see e.g. \cite{MaroSor2017}.

~\newline
\begin{remk}
    In Proposition \ref{PtwistMAslov} we will prove that, when the isotopy $(\phi_t)$ twists the vertical, all the dynamical Maslov indices are non positive. 
\end{remk}

\end{exa}
\begin{theorem}\label{existencepoints0DMIbis}
	Let $\cl\subset {\cal M} $ be a Lagrangian submanifold that is {H-isotopic to a graph}. Let $(\phi_t)$ be an isotopy  of conformally symplectic diffeomorphisms of ${\cal M}$ that twists the vertical.

	Then there exists a constant $C\in \N^*$ and a point $x\in \cl$ such that  
	
	$$\forall t\in [0, +\infty), {\rm DMI}(T_x\cl, (\phi_s)_{s\in [0, t]})\in [-C, C]\,.$$
	In particular
	\[
	\mathrm{DMI}_\infty(x,(\phi_t)
	)=0\, .
	\]
	\end{theorem}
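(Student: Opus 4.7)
The plan is to combine Theorem~\ref{Tppal} with a Hamiltonian-concatenation trick to handle the H-isotopic case, apply it on arbitrarily long time intervals, and then pass to a limit using compactness of $\cl$ together with semi-continuity of the Maslov index.

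Since $\cl$ is H-isotopic to a graph, write $\cl=\psi_1(\cl_0)$ for some Hamiltonian isotopy $(\psi_s)_{s\in[0,1]}$ with $\psi_0=\mathrm{Id}$ and some Lagrangian graph $\cl_0$. For each $N\in\N^*$, I form the isotopy obtained by concatenating $(\psi_s)_{s\in[0,1]}$ with $(\phi_s\circ\psi_1)_{s\in[0,N]}$, reparametrized to $[0,1]$; smoothing the junction does not affect any Maslov index. Because Hamiltonian diffeomorphisms are symplectic (hence conformally symplectic with factor $1$), this concatenation is a conformally symplectic isotopy starting at the identity, and Theorem~\ref{Tppal} applied to it and to the graph $\cl_0$ yields a point $x_0^{(N)}\in\cl_0$ such that, setting $L_0^{(N)}:=T_{x_0^{(N)}}\cl_0$ and $x^{(N)}:=\psi_1(x_0^{(N)})\in\cl$, the concatenation property of Maslov indices gives
\[
\mathrm{MI}\bigl((D\psi_s L_0^{(N)})_{s\in[0,1]}\bigr)\;+\;\mathrm{DMI}\bigl(T_{x^{(N)}}\cl,(\phi_s)_{s\in[0,N]}\bigr)\;=\;0.
\]
Since $\{T_{x_0}\cl_0:x_0\in\cl_0\}$ is compact and $\psi$ is fixed, the first term is uniformly bounded in absolute value by some constant $C_\psi$ independent of $N$.

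By the Remark after Definition~\ref{def-twist} (Proposition~\ref{PtwistMAslov}), the twist hypothesis forces $\mathrm{DMI}$ to be non-positive on every time subinterval. Combined with the displayed identity this gives $\mathrm{DMI}(T_{x^{(N)}}\cl,(\phi_s)_{s\in[0,N]})\in[-C_\psi,0]$, and splitting $[0,N]=[0,t]\cup[t,N]$ (with both pieces non-positive) yields
\[
\mathrm{DMI}\bigl(T_{x^{(N)}}\cl,(\phi_s)_{s\in[0,t]}\bigr)\in[-C_\psi,0]\qquad\text{for every }t\in[0,N].
\]
Compactness of $\cl$ lets me extract a subsequence $x^{(N_k)}\to x\in\cl$. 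Using the standard $\pm d$-semi-continuity of the Maslov index under $C^0$-convergence of Lagrangian paths, i.e.\ jumps of size at most $d=\dim M$ that may appear as the endpoint Lagrangians meet the Maslov singular cycle in the limit, one deduces
\[
\mathrm{DMI}\bigl(T_x\cl,(\phi_s)_{s\in[0,t]}\bigr)\in[-C_\psi-d,\,d]\qquad\text{for every }t\ge 0.
\]
Setting $C:=C_\psi+d\in\N^*$ gives the first assertion; dividing by $t$ and letting $t\to\infty$ yields $\mathrm{DMI}_\infty(x,(\phi_t))=0$.

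The principal obstacle is this last step: quantifying the change in $\mathrm{DMI}$ under the convergence $x^{(N_k)}\to x$ when the starting and endpoint Lagrangians may pile up on the Maslov singular cycle. This is handled via the intersection-number (or Robbin--Salamon crossing-form) definition of the Maslov index together with a compactness argument that bounds the discrepancy by a dimension-dependent constant. A secondary technicality is to verify that the smoothed concatenated isotopy genuinely fits the hypotheses of Theorem~\ref{Tppal} (a conformally symplectic isotopy starting at the identity), and that the Maslov index is invariant both under the smoothing of the junction and under the $[0,N{+}1]\to[0,1]$ reparametrization.
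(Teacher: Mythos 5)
Your proposal follows the same skeleton as the paper's proof: concatenate a (smoothed) Hamiltonian isotopy from a graph $\cl_0$ to $\cl$ with the given twist isotopy, apply Theorem~\ref{Tppal} to obtain, for each $N$, a point $x^{(N)}\in\cl$ whose total index vanishes, use compactness of the Hamiltonian part to bound the contribution over $[0,1]$ by a constant $C_\psi$, use Proposition~\ref{PtwistMAslov} (non-positivity of the index on each subinterval) and the splitting $[0,N]=[0,t]\cup[t,N]$ to localize the bound $\mathrm{DMI}(T_{x^{(N)}}\cl,(\phi_s)_{s\in[0,t]})\in[-C_\psi,0]$ to every $t\le N$, and finally extract a convergent subsequence $x^{(N_k)}\to x$ on $\cl$. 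All of this matches the paper.

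Where you genuinely diverge is the limiting step, which you (correctly) flag as ``the principal obstacle.'' You appeal to ``\,$\pm d$-semi-continuity of the Maslov index under $C^0$-convergence of Lagrangian paths'' via the Robbin--Salamon crossing form. That is not a clean, self-contained statement: the integer-valued Maslov index is locally constant on paths with endpoints off $\Sigma$, but there is no general control when endpoints pile up on the singular cycle, and the crossing-form picture alone does not deliver a quantitative jump bound. The paper's resolution is precisely the content of Section~\ref{angular MI}: it replaces $\mathrm{DMI}$ by the angular Maslov index $\mathrm{D}\alpha\mathrm{MI}$, which \emph{is} continuous in the Lagrangian subspace, and then invokes Corollary~\ref{comparison DAMI et DMI}, which bounds $\lvert \mathrm{D}\alpha\mathrm{MI}-\mathrm{DMI}\rvert < d$ uniformly. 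The $d$ appearing in your constant $C_\psi+d$ is exactly the $d$ of that corollary; the angular Maslov index is the device that makes your heuristic rigorous (and the paper gets $C=\rho+2d$ because $d$ enters twice, once on each side of the comparison). You should also be aware that the splitting of $\mathrm{DMI}$ at time $t$ implicitly requires $D\phi_t(T_{x^{(N)}}\cl)\notin\Sigma$; here too the angular index, being defined for all paths regardless of endpoints, is what lets the paper sidestep case distinctions as $t$ ranges over $[0,+\infty)$.
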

	Moreover, we deduce the following.
	\begin{theorem}\label{existencepoints0DMItris}
		Let $\cl\subset {\cal M} $ be a Lagrangian submanifold that is {H-isotopic to a graph}. Let $(\phi_t)$ be an isotopy  of conformally symplectic diffeomorphisms of ${\cal M}$ that twists the vertical and such that $\phi_{1+t}=\phi_t\circ\phi_1$. Let $x\in\cl$ be the point given by Theorem \ref{existencepoints0DMIbis}. Assume that the positive orbit of $x$ is relatively compact. Then there exists an ergodic $\phi_1$-invariant probability measure $\mu$ with compact support such that 
		$$\mathrm{DMI}(\mu, (\phi_t)
		)=0\,.$$
		Moreover, the support of $\mu$ is contained in the $\omega$-limit set of $x$.
\end{theorem}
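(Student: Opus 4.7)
The plan is to build the desired measure $\mu$ by applying Krylov--Bogoliubov to the positive orbit of $x$, to identify the asymptotic Maslov index of the resulting $\phi_1$-invariant measure as the limit of bounded Birkhoff-type averages (hence zero), and finally to pick an ergodic component using the sign constraint from the twist condition. As a first step, the relative compactness of the orbit $\{\phi_1^k(x):k\geq 0\}$ makes the empirical probability measures
\[
\mu_n := \frac{1}{n}\sum_{k=0}^{n-1}\delta_{\phi_1^k(x)}
\]
tight; I extract a weakly-$\ast$ convergent subsequence $\mu_{n_j}\rightharpoonup\nu$. A standard Krylov--Bogoliubov argument ensures that $\nu$ is $\phi_1$-invariant, and its support is contained in $\omega(x)$ since any open set disjoint from $\omega(x)$ is visited only finitely often by the orbit.

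The heart of the proof is showing $\mathrm{DMI}(\nu,(\phi_t))=0$. For this I would use the Schwartzman/Ruelle viewpoint underlying Proposition \ref{MI existence SCH}: working on the Lagrangian Grassmannian bundle $\pi\colon\Lambda(T{\cal M})\to{\cal M}$ and passing to its fiberwise universal cover, the isotopy $(\phi_s)_{s\in[0,1]}$ selects a specific $\Z$-equivariant lift of $D\phi_1$, from which one extracts a continuous function $f\colon\Lambda\to\R$ with
\[
\mathrm{DMI}(L,(\phi_s)_{s\in[0,n]})=\sum_{k=0}^{n-1}f(D\phi_1^k(L))\qquad\forall n\in\N,\ \forall L\in\Lambda.
\]
Lifting the empirical measures to $\tilde\mu_n:=\frac{1}{n}\sum_{k=0}^{n-1}\delta_{D\phi_1^k(T_x\cl)}$, which all sit inside a compact part of $\Lambda$, Prokhorov yields, along a further subsequence, a weak-$\ast$ limit $\tilde\nu$ which is $D\phi_1$-invariant and satisfies $\pi_\ast\tilde\nu=\nu$. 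The hypothesis $|\mathrm{DMI}(T_x\cl,(\phi_s)_{s\in[0,n]})|\leq C$ together with continuity of $f$ then gives
\[
\int f\,d\tilde\nu=\lim_j\frac{\mathrm{DMI}(T_x\cl,(\phi_s)_{s\in[0,n_j]})}{n_j}=0,
\]
while Birkhoff's ergodic theorem applied to $(\Lambda,D\phi_1,\tilde\nu)$ combined with Corollary \ref{no subspace} identifies $\int f\,d\tilde\nu=\int \mathrm{DMI}_\infty\,d\nu=\mathrm{DMI}(\nu,(\phi_t))$. Hence $\mathrm{DMI}(\nu,(\phi_t))=0$.

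To produce an ergodic measure, I take the ergodic decomposition $\nu=\int\mu_y\,d\nu(y)$: for $\nu$-a.e.\ $y$ the component $\mu_y$ is $\phi_1$-ergodic with compact support inside $\mathrm{supp}(\nu)\subset\omega(x)$. The twist condition (Proposition \ref{PtwistMAslov}) forces $\mathrm{DMI}(L,(\phi_s)_{s\in[0,t]})\leq 0$ for every $L$ and every $t\geq 0$, so $\mathrm{DMI}_\infty\leq 0$ wherever defined and consequently $\mathrm{DMI}(\mu_y,(\phi_t))\leq 0$ for $\nu$-a.e.\ $y$. Combined with
\[
\int\mathrm{DMI}(\mu_y,(\phi_t))\,d\nu(y)=\mathrm{DMI}(\nu,(\phi_t))=0,
\]
this forces $\mathrm{DMI}(\mu_y,(\phi_t))=0$ for $\nu$-a.e.\ $y$; any such ergodic $\mu:=\mu_y$ satisfies the conclusion of the theorem. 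The main obstacle is the second step, namely the construction of the continuous cocycle $f\colon\Lambda\to\R$ representing DMI as a genuine Birkhoff sum; this is what allows the boundedness of the partial DMI along the (possibly non-recurrent) orbit of $x$ to be transported, via the weak-$\ast$ convergence of the lifted empirical measures, into the rotation-number identity $\mathrm{DMI}(\nu,(\phi_t))=0$ for the limit invariant measure.
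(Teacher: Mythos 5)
Your overall strategy is the same as the paper's: Krylov--Bogoliubov on the orbit of the point $x$ from Theorem \ref{existencepoints0DMIbis}, a Schwartzman/Birkhoff argument on the Lagrangian Grassmannian to show the limiting invariant measure has vanishing asymptotic Maslov index, and finally ergodic decomposition together with the sign constraint from Proposition \ref{PtwistMAslov} to pick out an ergodic component. The ergodic-decomposition step you write out is exactly the one in the paper. The paper is terse about the Krylov--Bogoliubov step because it was already carried out in detail in the proof of Corollary \ref{cormeasurebis}; you are re-deriving it, which is fine.

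There is, however, one assertion that is false as stated, and you correctly identify it as the crux: the claim that there is a \emph{continuous} $f\colon\Lambda\to\R$ with
\[
\mathrm{DMI}\bigl(L,(\phi_s)_{s\in[0,n]}\bigr)=\sum_{k=0}^{n-1}f\bigl(D\phi_1^k(L)\bigr)\qquad\forall n,\ \forall L.
\]
This cannot hold: $\mathrm{DMI}(\cdot,(\phi_s)_{s\in[0,1]})$ is integer-valued, jumps by $\pm 1$ across the Maslov cycle $\Sigma$, and is not even defined when $L$ or $D\phi_1 L$ lies in $\Sigma$, so it is not the restriction of a continuous real function on $\Lambda$. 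The correct object, and the one the paper actually uses (in Proposition \ref{MI existence SCH} and in the proof of Corollary \ref{cormeasurebis}), is the angular Maslov index: set $f(L)=\mathrm{D}\alpha\mathrm{MI}(L,(\phi_s)_{s\in[0,1]})$, which \emph{is} continuous and does satisfy the Birkhoff-sum identity $\mathrm{D}\alpha\mathrm{MI}(L,(\phi_s)_{s\in[0,n]})=\sum_{k=0}^{n-1}f(D\phi_1^k L)$ for every $L$. Then Corollary \ref{comparison DAMI et DMI} (the bound $|\mathrm{DMI}-\mathrm{D}\alpha\mathrm{MI}|<d$) converts the estimate $|\mathrm{DMI}(T_x\cl,(\phi_s)_{s\in[0,n]})|\leq C$ from Theorem \ref{existencepoints0DMIbis} into a bound on the angular version, which gives $\int f\,d\tilde\nu=0$; and the identification $\mathrm{DMI}_\infty=\mathrm{D}\alpha\mathrm{MI}_\infty$ (again Corollary \ref{comparison DAMI et DMI}, together with Corollary \ref{no subspace}) lets you conclude $\mathrm{DMI}(\nu,(\phi_t))=0$. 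With this replacement of $\mathrm{DMI}$ by $\mathrm{D}\alpha\mathrm{MI}$ in the cocycle, your argument is correct and coincides with the paper's.
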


{ Corollary \ref{cor on minimizing measure} explains why this statement is reminiscent of Ma\~n\'e and Mather theory for invariant measures of Tonelli Hamiltonians flows.
\begin{defi}
	A measure $\mu$ is minimizing for a Tonelli Hamiltonian flow if its dual measure $\nu$ on $TM$ is such that\[
	\int_{TM}L\,d\nu = \inf_{\rho}\int_{TM}L\,d\rho\, ,
	\]
	where $L$ is the associated Lagrangian function and the infimum is taken over all measures on $TM$ invariant by the Euler-Lagrange flow.
\end{defi}
\begin{cor}\label{cor on minimizing measure}
	Let $\cl\subset\cal M$ be a Lagrangian graph. Let $(\phi_t)$ be a Tonelli Hamiltonian flow. The invariant measure $\mu$ with compact support of zero asymptotic Maslov index given by Theorem \ref{existencepoints0DMItris} applied at $\cl$ is a Mather minimizing measure.
\end{cor}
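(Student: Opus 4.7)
The plan is to connect the data produced by Theorem~\ref{Tppal} with Fathi's weak KAM theory for the Tonelli Hamiltonian $H$ generating $(\phi_t)$. Applied at the Lagrangian graph $\cl$, that theorem yields a smooth closed $1$-form $\eta$ on $M$, a Lipschitz function $u : M \to \R$ which is $C^1$ on an open full-measure set $U \subset M$, and, through Theorem~\ref{existencepoints0DMIbis}, a distinguished point $x \in \cl$ whose forward orbit has uniformly bounded dynamical Maslov index relative to $T_x\cl$.

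First, I would interpret $\eta$ and $u$ intrinsically. The $1$-form $\eta$ defines a cohomology class $c := [\eta] \in H^1(M,\R)$, and the identity $\phi_1^{-1}(\eta(q) + du(q)) \in \cl$ for $q \in U$ expresses that the pseudograph of $\eta + du$ is contained in $\phi_1(\cl)$. Since $\cl$ is a graph and $H$ is Tonelli, the standard variational description of the time-one map identifies this with a Lax--Oleinik type fixed-point relation for the class $c$, so that $u$ is a Lipschitz Fathi subsolution, in fact a negative weak KAM solution, of the Hamilton--Jacobi equation $H(q,\eta(q)+du(q)) = \alpha(c)$, where $\alpha$ denotes Mather's $\alpha$-function.

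Second, I would use the vanishing Maslov index at $x$ to show that the orbit of $x$ is $u$-calibrated for the class $c$. By Proposition~\ref{PHconvextwist} the Tonelli flow twists the vertical, and by the remark following Definition~\ref{def-twist} every dynamical Maslov index along the orbit is non-positive; the bound $\mathrm{DMI}(T_x\cl,(\phi_s)_{s\in[0,t]}) \in [-C,C]$ therefore forces only finitely many conjugate times along the orbit. Strict fibrewise convexity of $H$ then promotes the absence of conjugate times on every sufficiently large interval into the property that $t \mapsto \pi(\phi_t(x))$ minimizes the action $\int (L - \eta)\,dt$ among absolutely continuous curves with the same endpoints, which is precisely Fathi calibration by $u$. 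The ergodic measure $\mu$ from Theorem~\ref{existencepoints0DMItris}, being a weak-$*$ limit of empirical measures along this orbit, is therefore supported in the Aubry set $\tilde{\mathcal{A}}(c)$, and any invariant probability measure supported there realises the infimum defining $\alpha(c)$, so $\mu$ is Mather minimizing.

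The main obstacle will be the second step: upgrading the mere finiteness of the total number of conjugate times along the orbit of $x$ to pointwise minimization of the Tonelli action. This passage rests on the fact that a Tonelli extremal without conjugate points on an interval is locally action-minimizing, and that a limit (in the $\omega$-limit sense) of such locally minimizing extremals inherits full calibration by the weak KAM solution $u$; both of these facts use the full strength of strict convexity of $H$ in the fibers and the smoothness of the Hamiltonian flow, features that go beyond the purely conformally symplectic framework of the rest of the paper.
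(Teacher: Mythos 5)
Your proposal takes a genuinely different route from the paper, but it contains two gaps, the second of which you yourself flag.

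The first gap is in your interpretation of $u$. The identity $\phi_1^{-1}(\eta(q)+du(q))\in\cl$ from Theorem~\ref{Tppal} only says that the pseudograph of $\eta+du$ sits inside $\phi_1(\cl)$; it is not a Lax--Oleinik fixed-point relation for $u$, and there is no reason for the time-one graph selector to be a (negative) weak KAM solution. What the paper actually uses is the one-parameter family of graph selectors $u_t$ as $t\to\infty$: in the Tonelli case they coincide with the Lax--Oleinik semigroup applied to the generating data, Fathi's convergence theorem gives convergence of $u_t$ to a weak KAM solution, and Arnaud's result upgrades this to Hausdorff convergence of the pseudographs. The weak KAM solution appears as a limit, not at time $1$.

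The second gap is the one you call the main obstacle, and it is genuine. Bounded dynamical Maslov index along the orbit of $x$, together with the twist property, does bound the number of conjugate times. But a Tonelli extremal without conjugate points on $[T_0,T]$ is only \emph{locally} action-minimizing (a strict minimum among $C^0$-nearby competitors with the same endpoints), which is strictly weaker than calibration by a weak KAM solution, i.e.\ minimization among \emph{all} absolutely continuous competitors. There are Tonelli systems with extremals that have no conjugate points on $[0,\infty)$ and yet are not globally minimizing, so the implication ``finitely many conjugate times $\Rightarrow$ calibration'' does not hold without further input. The paper avoids this step entirely: once the supports of the measures are located in the Hausdorff limit of the pseudographs, which is the pseudograph of a weak KAM solution, the minimization property is read off directly from the characterization of Mather minimizing measures as those supported in such a pseudograph (the reference to (3.14) in Bernard's paper). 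In short, the paper's argument is geometric and global, tracking where the measures live, whereas your argument tries to convert local second-order information (conjugate points) into global first-order information (calibration), and that passage is exactly what is missing.
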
}

\begin{ques}
In the symplectic Tonelli case, when $\cl$ is not a graph, especially when its graph selector is not semi-concave, does Theorem \ref{existencepoints0DMItris} always give a minimizing measure?
\end{ques}

\begin{ques}
Without the Tonelli hypothesis, can we 
characterize the invariant measure of zero asymptotic Maslov index given by Theorem \ref{existencepoints0DMItris}?
\end{ques}

\begin{exa}
 At the beginning of this introduction, we dealt with the completely integrable case, where $\cal M$  is foliated by invariant graphs and where there are minimizing invariant measures in each of these graphs. But there are dissipative examples where there is only one measure with zero asymptotic Maslov index. In the case of the damped pendulum, see e.g. \cite{MaroSor2017}, there are only two invariant measures, one supported at  a sink with non-zero asymptotic index and one measure supported at a saddle hyperbolic fixed point, which has zero asymptotic Maslov index. Moreover, the only points that have zero asymptotic Maslov  index are the points that belong to the stable manifold of this saddle point. In this case, the Hausdorff dimension of the set of points with vanishing asymptotic Maslov index is 1. The next statement explain why it cannot be less in this setting.
\end{exa}

\begin{cor}\label{hausdorff dimension} Let $(\phi_t)$ be an isotopy  of conformally symplectic diffeomorphisms of ${\cal M}$ that twists the vertical. Assume that there exists $n$ closed 1-forms $\eta_1, \dots, \eta_n$ of $M$ such that no non-trivial linear combination of them vanishes, i.e.
	$$\forall (\lambda_1, \dots, \lambda_n)\in \R^n\backslash\{ 0_{\R^n}\}, \forall q\in M, \sum_{k=1}^n\lambda_k\eta_k(q)\not=0\,.$$
	Then 
	\[
	\text{dim}_H\Big( \{ x\in {\cal M} ;\ \text{\rm DMI}(x,(\phi_t)
	)=0 \} \Big)\geq n,
	\]
	where $\text{dim}_H(U)$ denotes the Hausdorff dimension of a set $U$.
\end{cor}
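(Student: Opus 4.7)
The plan is to construct a smooth $(n+d)$-parameter family of Lagrangian graphs parametrized by $\R^n\times M$, extract from each graph one point with vanishing asymptotic Maslov index via Theorem \ref{existencepoints0DMIbis}, and lower-bound the Hausdorff dimension of the resulting set by projecting onto $\R^n$.

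For every $\lambda=(\lambda_1,\dots,\lambda_n)\in\R^n$, the 1-form $\eta_\lambda:=\sum_{k=1}^n\lambda_k\eta_k$ is closed and its graph $\Gamma_\lambda\subset \mathcal M$ is a Lagrangian submanifold, which is trivially H-isotopic to a graph (itself, via the constant isotopy). Applying Theorem \ref{existencepoints0DMIbis} to $\Gamma_\lambda$ produces a point $x_\lambda\in\Gamma_\lambda$ with $\mathrm{DMI}_\infty(x_\lambda,(\phi_t))=0$. Set $S:=\{x_\lambda:\lambda\in\R^n\}\subset Z:=\{x\in \mathcal M:\mathrm{DMI}_\infty(x,(\phi_t))=0\}$.

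Consider the smooth map
\[
\Phi:\R^n\times M\to \mathcal M,\qquad (\lambda,q)\longmapsto (q,\eta_\lambda(q)).
\]
The hypothesis states that for every $q\in M$ the linear map $\lambda\mapsto \eta_\lambda(q)$ from $\R^n$ to $T^*_qM$ is injective, hence $\Phi$ is an injective immersion. Since $M$ is compact, the smallest singular value of $\lambda\mapsto \eta_\lambda(q)$ is continuous in $q$ and strictly positive, so it is bounded below by some $\sigma_0>0$ uniformly in $q$; this gives $\|\eta_\lambda(q)\|\geq \sigma_0\|\lambda\|$, which makes $\Phi$ proper and therefore a smooth embedding onto an $(n+d)$-dimensional submanifold of $\mathcal M$. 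By construction $x_\lambda=\Phi(\lambda,q_\lambda)$ for some $q_\lambda\in M$, so the smooth projection $\pi_1:\Phi(\R^n\times M)\to\R^n$ defined by $\pi_1(\Phi(\lambda,q))=\lambda$ satisfies $\pi_1(S)=\R^n$. For any $r>0$ the set $\Phi(\bar B_r(0)\times M)$ is compact and on it $\pi_1=\pi_{\R^n}\circ \Phi^{-1}$ is Lipschitz. Since $\pi_1(S\cap \Phi(\bar B_r(0)\times M))=\bar B_r(0)$ and Lipschitz maps do not increase Hausdorff dimension,
\[
\dim_H(Z)\geq \dim_H(S)\geq \dim_H\bigl(S\cap\Phi(\bar B_r(0)\times M)\bigr)\geq \dim_H(\bar B_r(0))=n.
\]

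The only non-trivial technical point is the uniform injectivity of $\lambda\mapsto \eta_\lambda(\cdot)$, which upgrades $\Phi$ from an injective immersion to a proper embedding and makes $\pi_1$ Lipschitz on compact pieces of the image. Everything else is a direct combination of Theorem \ref{existencepoints0DMIbis} with the standard behavior of Hausdorff dimension under Lipschitz maps.
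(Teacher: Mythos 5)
Your proposal is correct and takes essentially the same route as the paper: for each $\lambda\in\R^n$ apply Theorem~\ref{existencepoints0DMIbis} to the Lagrangian graph of $\eta_\lambda=\sum_k\lambda_k\eta_k$ to produce a point with vanishing asymptotic Maslov index, then observe that the map sending $\sum_k\lambda_k\eta_k(q)$ back to $(\lambda_1,\dots,\lambda_n)$ is Lipschitz (the paper packages this as the inverse of a bi-Lipschitz embedding $M\times\R^n\hookrightarrow T^*M$; you package it via the singular-value lower bound and properness of $\Phi$), and conclude by monotonicity of Hausdorff dimension under Lipschitz maps.
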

\begin{remk}When $M$ is the $d$ dimensional torus, this statement allows to bound from below by $d$ the Hausdoff dimension of the set of points with zero asymptotic Maslov index.
\end{remk}

We now give a by-product of the proof of Theorem \ref{Tppal}. This proof  relies on spectral  invariants that come from the symplectic topology, in particular graph selectors that were introduced by {\color{black}Chaperon and Sikorav, see \cite{Chap1991}, \cite{OttVit94} or \cite{PPS2003}}. We will see in the proof that the closed 1-form $\eta$ and the Lipschitz function $u$ in Theorem \ref{Tppal} only depend on $\phi_1(\cl)$ and not on the isotopy and will deduce, after introducing in Section \ref{angular MI} the angular Maslov index,  the following statement, which expresses the independence of the dynamical Maslov index from the isotopy.

\begin{proposition}\label{Indisotopybis}
Let $(\phi_{1, t})$ and $(\phi_{2, t})$ be two isotopies of conformally symplectic diffeomorphisms of ${\cal M}$ such that $\phi_{1, 0}=\phi_{2, 0}={\rm Id}_{\cal M}$ and $\phi_{1, 1}=\phi_{2, 1}$.  Then for every Lagrangian subspace $L$ of $T{\cal M}$ such that $L$ and $D\phi_{1, 1}(L)$ are transverse to the vertical foliation, we have
$${\rm DMI}(L, (\phi_{1, t})_{t\in[0, 1]})= {\rm DMI}(L, (\phi_{2, t})_{t\in[0, 1]}).
$$
 	
\end{proposition}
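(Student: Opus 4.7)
The plan is to combine Theorem~\ref{Tppal} with the angular Maslov index to be introduced in Section~\ref{angular MI}, the crucial point being the emphasized observation that the closed 1-form $\eta$ and the function $u$ produced by Theorem~\ref{Tppal} depend only on $\phi_1(\cl)$ and not on the isotopy.

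Given a Lagrangian subspace $L\subset T_x\cal M$ such that $L$ and $D\phi_{1,1}(L)$ are both transverse to the vertical foliation, I first pick an auxiliary Lagrangian graph $\cl\subset\cal M$ passing through $x$ with $T_x\cl=L$, and apply Theorem~\ref{Tppal} to both isotopies for this common $\cl$. Since $\phi_{1,1}(\cl)=\phi_{2,1}(\cl)$, the two applications return the same data $(\eta,u)$, hence the same family of good points
\[
p_q=\phi_{1,1}^{-1}\bigl(\eta(q)+du(q)\bigr)\in\cl,\qquad q\in U,
\]
at which $\mathrm{DMI}\bigl(T_{p_q}\cl,(\phi_{i,s})_{s\in[0,1]}\bigr)=0$ holds simultaneously for $i=1,2$. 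Because $U$ has full Lebesgue measure in $M$ and $q\mapsto p_q$ is continuous on $U$, this family is dense in $\cl$.

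The second step bootstraps the equality $\mathrm{DMI}_1=\mathrm{DMI}_2$ from this dense subset of $\cl$ to arbitrary admissible $(x,L)$. This is where the angular Maslov index intervenes: a real-valued continuous refinement of the integer Maslov index, defined for every path of Lagrangians and coinciding with the integer version whenever the endpoints are transverse to the vertical foliation. Denoting by $\widetilde{\mathrm{DMI}}$ the associated angular dynamical index, the difference
\[
F(y,\Lambda):=\widetilde{\mathrm{DMI}}\bigl(\Lambda,(\phi_{1,t})_{t\in[0,1]}\bigr)-\widetilde{\mathrm{DMI}}\bigl(\Lambda,(\phi_{2,t})_{t\in[0,1]}\bigr)
\]
depends continuously on $(y,\Lambda)$ in the Lagrangian Grassmannian bundle and vanishes at every good point $p_q$, hence vanishes identically on the connected component of $(x,L)$.

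On the open subset where both $\Lambda$ and $D\phi_{1,1}(\Lambda)$ are transverse to the vertical, the angular index coincides with the integer Maslov index, so $F\equiv 0$ translates into $\mathrm{DMI}(L,(\phi_{1,t})_{t\in[0,1]})=\mathrm{DMI}(L,(\phi_{2,t})_{t\in[0,1]})$, as desired. The principal obstacle I anticipate is setting up the angular Maslov index with the requisite continuous dependence on the Lagrangian even at non-transverse configurations, which is the content of Section~\ref{angular MI}; once this is granted, the isotopy-independence of $(\eta,u)$ from Theorem~\ref{Tppal} combined with the continuity-plus-density argument closes the proof.
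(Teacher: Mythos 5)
Your proposal correctly isolates the two ingredients the paper uses — that the data $(\eta,u)$ of Theorem~\ref{Tppal} depend only on $\phi_{1,1}(\cl)=\phi_{2,1}(\cl)$, and that the angular Maslov index provides a continuous quantity to interpolate — but there is a real gap in the bootstrap step.

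The density claim ``Because $U$ has full Lebesgue measure in $M$ and $q\mapsto p_q$ is continuous on $U$, this family is dense in $\cl$'' is not justified and is in fact false in general. A continuous map on a full-measure set need not have dense image, and here the image $\{\eta(q)+du(q):q\in U\}$ is the graph selector of $\phi_{1,1}(\cl)$, which picks a single branch in each fiber: when $\phi_{1,1}(\cl)$ develops caustics the selector's image is typically \emph{not} dense in $\phi_{1,1}(\cl)$, hence $\{p_q\}$ is not dense in $\cl$. So continuity alone cannot propagate the vanishing from the good points to the chosen point $x$, and your argument stalls at precisely the step where it needs to conclude $F(x,L)=0$. (A secondary inaccuracy: the angular Maslov index does not ``coincide with the integer version'' at transverse endpoints; by Proposition~\ref{lemma relation MI et AMI} they differ by $\frac{1}{\pi}\sum_j(\theta_j(b)-\theta_j(a))$. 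This is harmless here because that correction depends only on the endpoints, which are shared by the two isotopies, so it cancels in the difference — but it is worth saying explicitly rather than asserting coincidence.)

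The paper closes the gap by a constancy argument rather than a density argument: since the two isotopies end at the same map, the angular correction term is the same for both, so the difference
\[
L\ \longmapsto\ \mathrm{D}\alpha\mathrm{MI}(L,(\phi_{1,t})_{t\in[0,1]})-\mathrm{D}\alpha\mathrm{MI}(L,(\phi_{2,t})_{t\in[0,1]})
\]
is an integer-valued continuous function on the (connected) Lagrangian Grassmannian, hence constant. A \emph{single} good point from Theorem~\ref{Tppal} then forces the constant to be $0$; density of good points is never needed. If you replace your density-plus-continuity step by this integrality-plus-connectedness argument (which is exactly Proposition~\ref{independence isotopy} in the paper), your proof becomes correct and matches the paper's.
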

\begin{remk} For ease of reading, we have chosen not to deal with angular Maslov index in this introduction.
The statement given in Section \ref{angular MI} is more precise, because it deals with the angular Maslov index for every Lagrangian subspace of $T{\cal M}$.
\end{remk}
\medskip

\noindent {\it Organisation of the paper.} Section \ref{section def MI} is devoted to the definition of the Maslov index and the dynamical Maslov index. We show that the twist hypothesis forces the index to be non positive. The invariance under symplectic reduction of the Maslov index is discussed following \cite{Vit1987}. In Section \ref{section gfqi} we prove that any Lagrangian path contained in a Lagrangian submanifold and whose endpoints project on the graph selector has zero Maslov index. This result is fundamental to prove Theorem \ref{Tppal}, whose proof occupies Section \ref{section heart}. The angular Maslov index is introduced in Section \ref{angular MI}, where also its relation with the Maslov index is detailed. Finally, Section \ref{section applications} is devoted to the proofs of the main outcomes presented in the introduction.

\medskip

\noindent {\it Acknowledgements.} The authors are grateful to Patrice Le Calvez for pointing out the link with Schwartzman's article \cite{Schwartzman1957}.
\section{On Maslov index}\label{section def MI}
\subsection{Some reminders on Maslov index}\label{ssremindersMslov}
Let $\cal M$ be a $2d$-dimensional  symplectic manifold that admits a Lagrangian foliation $\cal V$. We denote by  $V(x)=V_x:=T_x\cal V$ its associated Lagrangian bundle. Let $p:T{\cal M}\rightarrow \cal M$ be the  canonical projection. 
Let $\Lambda(\cal M)$ be the Grassmanian of Lagrangian subspaces of $T\cal M$.  We recall that $\Lambda(\cal M)$ is a smooth manifold with dimension $2d+\frac{d(d+1)}{2}$. The fibered singular cycle associated to $\cv$ is the set 
$$\Sigma(\cal M)=\{ L\in \Lambda(\cal M) :\  L\cap V_{p(L)}\not=\{ 0\}\}.$$
Every fiber $\Sigma_x(\cal M)$ of $\Sigma(\cal M)$ is a  cooriented algebraic singular hypersurface of $\Lambda_x(\cal M)$, see e.g. \cite{McDuffSalamon2017}, \cite{RobbinSalamon1993}. Hence $\Sigma(\cal M)$ is a cooriented singular hypersurface of $\Lambda(\cal M)$.\\
{ The singular locus of $\Sigma(\cal M)$ is then $\{ L\in\Lambda(\cal M) :\ \mathrm{dim}(L\cap V_{p(L)})\geq 2 \}$ and the regular locus is \begin{equation}\label{Esinglocus}
\Sigma_1:=\{ L\in\Lambda :\ \mathrm{dim}(L\cap V_{p(L)})=1 \}.
\end{equation} }
Once a coorientation of $\Sigma(\cal M)$ is fixed, it is classical to  associate to every continuous loop
$\Gamma:\T\rightarrow \Lambda (\cal M)$ its {\sl Maslov index} ${\rm MI}(\Gamma)$,  that satisfies the following properties:
\begin{itemize}
\item two homotopic loops have the same Maslov index;
\item if $\Gamma$ is a loop that avoids the singular locus   and is { topologically} transverse to the regular one 
of $\Sigma(\cal M)$, then ${\rm MI}(\Gamma)$ is the number of signed intersections of $\Gamma$ with $\Sigma(\cal M)$ with respect to the chosen coorientation;
\item every loop is homotopic to a smooth loop that avoid the singular locus and is transverse to the regular locus.
\end{itemize}
 An arc in $\Lambda(\cal M)$ is an immersion $\Gamma:[0,1]\to\Lambda(\cal M)$. In particular, $\Gamma([0,1])$ does not have self-intersections. By smooth arc, we mean a $C^\infty$ arc.
When $\Gamma: [0, 1]\rightarrow \Lambda(\cal M)$ is an arc 
	whose endpoints are in  $\Lambda({\cal M})\backslash \Sigma({\cal M})$, following 
	{\color{black} Duistermaat \cite[Page 183]{Dui76}},  
	we can concatenate $\Gamma$ with an arc $\Gamma_1$ that connects $\Gamma (1)$ to $\Gamma(0)$ in $\Lambda({\cal M})\backslash \Sigma({\cal M})$. The Maslov index of $\Gamma$ is the Maslov index of this loop, which is independent from the choice of $\Gamma_1$ {
	since $\Gamma_1$ is in $\Lambda(\M)\setminus\Sigma(\M)$}. 

~\newline
	\begin{remk}\label{rmq path transverse}
		If $\Gamma:[0,1]\to\Lambda(\cm)$ is an arc contained in $\Lambda(\cm)\setminus\Sigma(\cm)$, i.e. $\Gamma(t)\cap\Sigma_{p\circ\Gamma(t)}=\{0\}$ for every $t$, then its Maslov index ${\rm MI}(\Gamma)$ is zero.
	\end{remk}

\subsection{Coorientation of $\Sigma^1$}\label{subsec defi MI} 
We now give some details concerning the singular and regular loci of $\Sigma (\cal M)$ and explain our choice of coorientation of $\Sigma(\cal M)$. {\color{black}For more details, see for example \cite{Dui76}.} For ease of reading, we denote $\Sigma (\cal M)$ (resp. $\Lambda(\cal M)$) by $\Sigma$ (resp. $\Lambda$).

Then  $\Sigma$ is an algebraic subvariety of $\Lambda$ that  is the union of   
\begin{itemize}
\item the regular locus that is the smooth submanifold of codimension 1 and is defined in \eqref{Esinglocus}, 
\item the  boundary of $\Sigma_1$, i.e.  the singular locus  $\Sigma\setminus\Sigma^1$, that is a finite union of submanifolds with   codimension at least 3.
\end{itemize}
Since every loop is homotopic to a smooth loop avoiding the singular locus and intersecting transversally the regular one { and since two homotopic loops have the same Maslov index}, we just have to define the coorientation at points of $\Sigma^1$. To do that, we introduce the notion of height in a symplectic vector space $(E^{2d} , \Omega)$.\\
 We fix a reference Lagrangian subspace $V$ of $E$ and denote by $P^V$ the canonical projection on the quotient vector space $E/V$. If $L_1$, $L_2$ are two Lagrangian subspaces of $E$ that are transverse to $V$, we define the height of $L_1$ above $L_2$ with respect to $V$, see \cite{Arn08}, as follows.

  	\begin{defi}
  	Let $L_1,L_2\subset E$ be two Lagrangian subspaces both transverse to $V$. The {\sl height} of $L_2$ above $L_1$ with respect to $V$ is the quadratic form $$\mathcal{Q}_V(L_1,L_2):E/V\to\R$$ defined by 
  $$
  	\forall v\in E/V, \mathcal{Q}_V(L_1,L_2)(v):=\Omega(({P}^V|_{L_1})^{-1}(v),({P}^V|_{L_2})^{-1}(v)).
  	$$
  	\end{defi}
With the hypotheses of  this definition, the kernel of $ \mathcal{Q}_V(L_1,L_2)$ is isomorphic to $L_1\cap L_2$. In particular, $L_1$ is transverse to $L_2$ if and only if $Q_V(L_1,L_2)$ is non degenerate.\\
We have 
\begin{itemize}
\item If  $L_1,L_2,L_3$ are Lagrangian subspaces in $E$, all transverse to $V$, it holds, see \cite{Arn08}, 
\begin{equation}\label{prop:quadr:form}
\mathcal{Q}_V(L_1,L_3)=\mathcal{Q}_V(L_1,L_2)+\mathcal{Q}_V(L_2,L_3).
\end{equation}
\item if $V$, $K$, $L$ are Lagrangian subspace of $E$ such that each of them is transverse to the two others, then { $\mathcal{Q}_V(K,L)\circ P^V|_{L}=-\mathcal{Q}_K(V,L)\circ P^K|_{L}$ and then}  $\mathcal{Q}_V(K,L)$ and $-\mathcal{Q}_K(V,L)$ have the same signature.\\
{ Let us prove  that $\mathcal{Q}_V(K,L)\circ P^V|_{L}=-\mathcal{Q}_K(V,L)\circ P^K|_{L}$. For $\ell\in L$, there exists a unique pair of vectors $v\in V, k\in K$ such that $\ell=v+k$. then we have
\begin{itemize}[label=$\bullet$]
    \item $\mathcal{Q}_V(K,L)\circ P^V(\ell)=\Omega(k, \ell)=\Omega(k, v)$;
    \item $-\mathcal{Q}_K(V,L)\circ P^K(\ell)=-\Omega(v, \ell)=-\Omega(v, k)=\Omega(k, v)
    $.
\end{itemize}
}
\item if $L$ and $K$ are Lagrangian subspaces that are transverse to $V$ and if $\phi:E\righttoleftarrow$ is a symplectic isomorphism, then ${\cal Q}_V(K, L)$ has same signature as ${\cal Q}_{\phi(V)}(\phi(K), \phi(L))$.
\end{itemize}
{ We now describe the local coorientation of $\Sigma_1$ that we will use.} Let us fix $L_0\in\Sigma_1$ and let $x_0:=p(L_0)$. We have $\dim(L_0\cap V_{x_0})=1$. We fix a Darboux  chart $F=(F_1, F_2): {\cal U}\rightarrow\R^{2d}$ at $x_0$  such that $\cal U$ is a small neighborhood of $x_0$ in $\M$, $F(\cal U)=[a, b]^{d}\times [a, b]^d$ and $DF_{2|L_0}$ is injective  and $$\forall x\in {\cal U}, F({ \cV(x)}\cap \cal U)= F_1(x)\times [a, b]^d.
$$
{ Let us explain why such a chart exists. Using Theorem 7.1 of \cite{Weinstein1971}, we can map locally the foliation $\cV$ onto the vertical foliation of $\R^{2d}$ by a symplectic chart $(U, \Phi)$. Then, composing with a symplectic isomorphism $\psi_t(x, y)=(x, y+tx)$ of $\R^d\times\R^d$, for some $t\in\R$, we obtain a new chart $F=(F_1, F_2)$ that maps $\cV$ onto the vertical foliation such that $DF(L_0)$ is transverse to $\{0\}\times \R^d$ and then $DF_{2|L_0}$ is injective.

}

We denote by ${\cal K}$ the  Lagrangian foliation with leaves $F^{-1}([a, b]^d\times \{y_0\}$). Then it is  transverse to the vertical bundle $V$. Moreover, $T_{x_0}{\cal K}$ and $L_0$ are transverse, since ${DF_2}_{\vert L_0}$ is injective. We denote by $K$ the tangent bundle to ${\cal K}$. { Because $\dim(L_0\cap V)=1$,}  the kernel of ${\cal Q}_{K}(V, L_0)$ is 1-dimensional. We denote by $n$ the index\footnote{The \emph{index} of a quadratic form is the maximum dimension of a subspace of $E$ on which the quadratic form is negative definite.} of ${\cal Q}_{K}(V, L_0)$. We define $${\cal P}_1=\{ L\in \Lambda\backslash \Sigma;\  p(L)\in {\cal U},\ L\pitchfork K,\ {\rm index\,}{\cal Q}_{K}(V, L)=n\}$$
 and $$ {\cal P}_2=\{ L\in \Lambda\backslash \Sigma;\    p(L)\in {\cal U},\ L\pitchfork K,\  {\rm index\,}{\cal Q}_{K}(V, L)=n+1\}.$$
Observe that ${\cal P}_1$ and ${\cal P}_2$ are connected and that ${\cal P}_1\cup{\cal P_2}\cup \Sigma_1$ is a neighbourhood of $L_0$ in $\Lambda$. Hence ${\cal P}_1$ and ${\cal P}_2$ define locally a coorientation\footnote{Let $\gamma\subset \cal P_1\cup\cal P_1\cup \Sigma_1$ be a path from $\cal P_2$ to $\cal P_1$, crossing $\Sigma_1$ transversally once at $\gamma(t)$. Then $\gamma'(t)\in \R_+ N$, where $N$ is a normal vector field to $\Sigma_1$ and determines a coorientation of $\Sigma_1$ at $L_0$.} 
of $\Sigma_1$ at $L_0$. To be sure that we obtain a global coorientation of $\Sigma$, we have to prove that this local coorientation is independent from the choice of our foliation ${\cal K}$. We just have to look at what happens in the fiber $\Lambda_{x_0}$ for different choices of $K_{x_0}$. In other words, we will prove a result in a fixed symplectic vector space $(E, \Omega)$.
\begin{proposition}\label{Pcoorientation}
Let $V$, $L_0$ be two Lagrangian subspaces of $E$ such that $\dim (L_0\cap V)=1$. Let $K_1$, $K_2$ be two Lagrangian subspaces of $E$ that are transverse to $L_0$ and $V$. We denote by $n_i$ the index of ${\cal Q}_{K_i}(V, L_0)$. There exists a neighbourhood $\cal U$ of $L_0$ in the Lagrangian Grassmannian of E such that \\
$\{ L\in {\cal U};\ L\pitchfork K_1,\  {\rm index\,}{\cal Q}_{K_1}(V, L)=n_1+1\}=$\hglue 5truecm\\
\hglue 5truecm$\{ L\in {\cal U};\ L\pitchfork K_2,\  {\rm index\,}{\cal Q}_{K_2}(V, L)=n_2+1\}
$\\
and
$$\{ L\in {\cal U};\ L\pitchfork K_1,\  {\rm index\,}{\cal Q}_{K_1}(V, L)=n_1\}=\{ L\in {\cal U};\ L\pitchfork K_2,\  {\rm index\,}{\cal Q}_{K_2}(V, L)=n_2\}.$$
\end{proposition}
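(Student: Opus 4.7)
The plan is to reduce both the $K_1$-description and the $K_2$-description of the partition $\mathcal P_1\sqcup\mathcal P_2\sqcup\Sigma_1$ near $L_0$ to a single intrinsic sign condition attached to the line $V\cap L_0$, thereby eliminating the dependence on the choice of $K$.

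First I would identify the $1$-dimensional kernel of $\mathcal Q_K(V,L_0)$: under the isomorphism $P^K|_V:V\to E/K$, this kernel is the image $[v_0]_K$ of a fixed generator $v_0\in V\cap L_0\setminus\{0\}$. A standard perturbation result on signatures of quadratic forms then says that, for $L$ close to $L_0$ and transverse to $V$ and $K$, the non-degenerate form $\mathcal Q_K(V,L)$ has index equal to $n$ or $n+1$ according to the sign of $\mathcal Q_K(V,L)([v_0]_K)$: positive yields $n$, negative yields $n+1$. Indeed, writing $\mathcal Q_K(V,L)=\mathcal Q_K(V,L_0)+(\mathcal Q_K(V,L)-\mathcal Q_K(V,L_0))$ and noting that $\mathcal Q_K(V,L_0)$ vanishes on its kernel, the sign of the perturbation restricted to $\R[v_0]_K$ is precisely the sign of the new eigenvalue created by the perturbation.

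Next I would unwind the definition to get $\mathcal Q_K(V,L)([v_0]_K)=\Omega(v_0,\ell_L^K)$, where $\ell_L^K\in L$ is the unique element with $\ell_L^K-v_0\in K$. Taking $K$ itself as the transverse Lagrangian used to parameterize $L$ as a graph over $L_0$, this quantity equals $\beta_L^K(v_0,v_0)$, where $\beta_L^K\in\mathrm{Sym}^2(L_0^*)$ is the symmetric bilinear form representing $L$. The third step is to show that the sign of $\beta_L^K(v_0,v_0)$ does not depend on $K$ for $L$ sufficiently close to $L_0$: this is a consequence of the canonical Arnol'd identification $T_{L_0}\Lambda\cong\mathrm{Sym}^2(L_0^*)$, under which every tangent vector to $\Lambda$ at $L_0$ is intrinsically a symmetric bilinear form on $L_0$. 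For a smooth path $L(t)$ from $L_0$ with tangent $\dot L(0)$, one obtains $\beta_{L(t)}^K(v_0,v_0)=t\,\dot L(0)(v_0,v_0)+O(t^2)$, so the leading coefficient is $K$-independent and the sign is the same for $K_1$ and $K_2$ provided $L$ is close enough to $L_0$ and $L\notin\Sigma$.

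Combining these steps, both partitions of a neighbourhood of $L_0$ into $\mathcal P_1\sqcup\mathcal P_2\sqcup\Sigma_1$ coincide near $L_0$ with the single intrinsic decomposition determined by the sign of $\dot L(0)(v_0,v_0)$, giving the two equalities claimed in the proposition. The main technical obstacle is Step~3: verifying that the leading-order term of $\beta_L^K(v_0,v_0)$ is genuinely $K$-independent. Concretely, given two Lagrangians $K_1,K_2$ transverse to both $V$ and $L_0$, one must check that the associated linear maps $L_0\to K_i$ representing the same nearby $L$ produce the same tangent vector in $T_{L_0}\Lambda\cong\mathrm{Sym}^2(L_0^*)$. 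This reduces to a short computation exploiting the Lagrangianness of $L_0$ (so that $\Omega(v_0,\cdot)$ vanishes on $L_0$), which controls the difference $\ell_L^{K_1}-\ell_L^{K_2}\in L$ at second order in $\|L-L_0\|$ while each $\Omega(v_0,\ell_L^{K_i})$ is of first order.
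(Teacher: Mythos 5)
Your Step 2, the claim that the index of $\mathcal{Q}_K(V,L)$ is $n$ or $n+1$ according to the sign of $\mathcal{Q}_K(V,L)([v_0]_K)$, is not correct and this is a genuine gap. For a small perturbation of a degenerate symmetric form, the sign of the perturbed form at the kernel vector does not determine the sign of the near-zero eigenvalue. Concretely (take $d=2$, $n=0$, $[v_0]_K=e_1$), let $S^{L_0}=\begin{pmatrix}0&0\\0&1\end{pmatrix}$ and consider a nearby Lagrangian $L$ with $S^L=\begin{pmatrix}\epsilon&\sqrt{2\epsilon}\\\sqrt{2\epsilon}&1\end{pmatrix}$: this tends to $S^{L_0}$ as $\epsilon\to 0^+$, it is non-degenerate with $\det S^L=-\epsilon<0$, so the index is $n+1=1$, yet $\mathcal{Q}_K(V,L)([v_0]_K)=S^L(e_1,e_1)=\epsilon>0$. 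The cross-terms $\mathcal{Q}_K(L_0,L)(e_1,e_j)$ can be of order $\sqrt{\|L-L_0\|}$ and dominate the on-diagonal term of order $\|L-L_0\|$. Your Step 3 has the same issue in a different guise: the leading-order formula $\beta_{L(t)}^K(v_0,v_0)=t\,\dot L(0)(v_0,v_0)+O(t^2)$ only controls the sign when $\dot L(0)(v_0,v_0)\neq 0$, and the proposition is about a full neighbourhood $\mathcal{U}$, not paths crossing $\Sigma_1$ transversally. Lagrangians $L\in\mathcal{U}\setminus\Sigma$ lying along directions tangent to $\Sigma_1$ are not covered, and for those the sign of $\beta_L^K(v_0,v_0)$ genuinely can fail to be $K$-independent.

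The paper avoids this by working with the correct quantity: not the value of the form on a fixed vector, but the \emph{near-zero eigenvalue} $\lambda(S_j^L)$ of the matrix $S_j^L$ of $\mathcal{Q}_{K_j}(V,L)$, whose sign \emph{does} determine whether the index is $n_j$ or $n_j+1$ once a spectral gap has been fixed. To compare $\lambda(S_1^L)$ and $\lambda(S_2^L)$, the paper passes to the inverse matrices $(S_j^L)^{-1}$ (the matrix of $-\mathcal{Q}_V(K_j,L)$), where the near-zero eigenvalue becomes the dominant eigenvalue; the cocycle identity yields the clean additive relation $-(S_1^L)^{-1}=-(S_2^L)^{-1}+S$ with $S$ a \emph{fixed} matrix depending only on $K_1,K_2$; and a short operator-norm estimate shows that if the dominant eigenvalues of $(S_1^L)^{-1}$ and $(S_2^L)^{-1}$ had opposite signs, the difference of the two sides would simultaneously be $\leq 2C$ and $>3C$, a contradiction. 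Your reduction to an intrinsic tangent-space invariant is an appealing idea, but to make it rigorous you would need to combine it with a topological step (the index is locally constant on $\mathcal{U}\setminus\Sigma_1$, which has exactly two local components, so it suffices to compare the labellings on one transversal path); your current text does not make that reduction.
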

\begin{proof} Because, for $j=1,2$, $V$ and $K_j$ are transverse Lagrangian subspaces, every basis $(e_1, \dots, e_d)$ of $V$ can be completed in a symplectic basis  $(e, f^j)=(e_1, \dots, e_d, f^j_1, \dots, f^j_d)$ of $E$ such that   $f^j_i\in K_j$. Then, every Lagrangian subspace $L$ of $E$ that is close enough to $L_0$ is the graph in this basis of a $d\times d$  symmetric matrix $S_j^L$ that continuously depends on $L$ and is close to $S_j^{L_0}$. { We identify $V$ with $\R^d$ via the basis $(e_i)$.} \\
	As $\dim\ker S_j^{L_0}=1$, we have $\R^d=\R \ell_j(L_0)+E_j(L_0)$ where $\ker S_j^{L_0}=\R\ell_j(L_0)$ and $E_j(L_0)=(\R \ell_j(L_0))^\bot$ is the orthogonal of $\ker S_j^{L_0}$ for the usual euclidean scalar product, i.e. the sum of the eigenspaces for the non-zero eigenvalues. Observe that we can choose $\ell_1(L_0)=\ell_2(L_0)$. For $L$ in some neighbourhood ${\cal U}$ of  $L_0$, $S_j^L$ has a spectral gap with one eigenvalue $\lambda(S_j^L)$ close to $0$ and the others far away from $0$. Hence we can continuously extend $\ell_j(L)$ and $E_j(L)$ for $L$ close to $L_0$ in such a way that $\ell_j(L)$ is an eigenvector for the eigenvalue that is close to $0$, and $E_j(L)$ is $(\R\ell_j(L))^\perp$. Moreover, the signature of the restriction of $S_j^L$ to $E_j(L)$ remains equal to its value for $L=L_0$ if $\cal U$ is small enough. \\
	The matrix of ${\cal Q}_{K_j}(V, L)$ in the basis $(P^{K_j}(e_1), \dots, P^{K_j}(e_d))$ of $E/K_j$ is $S_j^L$ and then to estimate the index of ${\cal Q}_{K_j}(V, L)$, we only need to know the sign of $\lambda(S_j^L)$.\\
	We recall that when  $L\in {\cal U}$ is transverse to $V$,   { we have ${\cal Q}_{K_j}(V, L)\circ \big(P^{K_j}|_{L}\big)^{-1}=-{\cal Q}_{V}(K_j, L)\circ\big( P^V|_{L}\big)^{-1}$}. The matrix of $-{\cal Q}_{V}(K_j, L)$ in the basis $(P^V(f_1^j),\dots   , P^V(f_d^j))$ is  $\Big(S_j^L\Big)^{-1}$ and thus we are reduced to estimate the sign of the  eigenvalue of $\Big(S_j^L\Big)^{-1}$ that has the largest absolute value. Observe that $P^V(f_i^1)=P^V(f_i^2)$.
	We denote by $S$ the matrix of $ {\cal Q}_{V}(K_1,K_2)$ in the same basis and we deduce from \eqref{prop:quadr:form} that 
	\begin{equation}\label{Ematsym}-\Big(S_1^L\Big)^{-1}=-\Big(S_2^L\Big)^{-1}+S
	\end{equation}
	Let us denote by $\|\cdot\|_2$ the usual Euclidean norm  on $\R^d$ and let us endow the set of $d$-dimensional matrices with the associated norm defined by
	$$\| S\|=\sup_{\|v\|_2=1} \|Sv\|_2\,.$$ Then if $\,{\cal U}$ is small enough, there exists $C>\| S\|$ such that for every $L\in {\cal U}$,
	$\big(\lambda(S_j^L)\big)^{-1}$ is the only eigenvalue of $\Big(S_j^L\Big)^{-1}$ whose absolute value is larger than $3C$ and $C$ is an upper bound of the modulus of all the other eigenvalues of $\Big(S_j^L\Big)^{-1}$.
	
	Let us prove  that  $\lambda(S_1^L)$ and $\lambda(S_2^L)$ have the same sign. Let $v\in\R^d$ be an eigenvector of $S_1^L$ for the eigenvalue $\lambda(S_1^L)$. Then there exists $v_1, v_2\in\R^d$ that are mutually orthogonal such that {\color{black}$v=v_1+v_2$,} $S_2^Lv_1=\lambda(S_2^L)v_1$ and $v_2$ is orthogonal to the eigenspace of $S_2^L$ for $\lambda(S_2^L)$.  Using \eqref{Ematsym}, we obtain
	$$\big(\lambda(S_1^L)\big)^{-1}-\big(\lambda(S_2^L)\big)^{-1}\frac{\| v_1\|_2^2}{\| v\|_2^2}=\frac{v_2^T}{\| v\|_2} \big(S_2^L\big)^{-1}\frac{v_2}{\| v\|_2}- \frac{v^T}{\| v\|_2}S\frac{v}{\| v\|_2}
	$$
	Observe that the  absolute value of the right-hand term is less than $2C$. If $\lambda(S_1^L)$ and $\lambda(S_2^L)$ have different signs, then the absolute value of the left-hand term is larger than the absolute value of  $\big(\lambda(S_1^L)\big)^{-1}$, then larger than $3C$, which provides a contradiction.
\end{proof}
In order to define the Maslov index, we first introduce the notions of positive (resp. negative) arc. Recall that $K$ denotes the tangent bundle to $\cal K$, where $\cal K$ is the Lagrangian foliation with leaves $F^{-1}([a,b]^d\times\{y_0\})$.
\begin{defi} With the same notation, an arc $\Gamma:(-\varepsilon_0, \varepsilon_0)\rightarrow \Lambda$ such that $$\Gamma((-\varepsilon_0,\varepsilon_0))\cap \Sigma=\Gamma((-\varepsilon_0,\varepsilon_0))\cap \Sigma_1=\{ \Gamma(0)\} =\{L_0\} $$  and that is topologically transverse to $\Sigma^1$ is {\sl positive} if there exists $\varepsilon>0$ such that
\begin{itemize}
\item for every $t\in (-\varepsilon, 0)$, ${\rm index}({\cal Q}_{K}(V, \Gamma(t)))={\rm index}({\cal Q}_{K}(V, L_0))+1$;
\item for every $t\in (0, \varepsilon)$, ${\rm index}({\cal Q}_{K}(V, \Gamma(t)))={\rm index}({\cal Q}_{K}(V, L_0))$.
\end{itemize}
Respectively, an arc $\Gamma:(-\varepsilon_0,\varepsilon_0)\to\Lambda$ is {\sl negative} if $\Gamma\circ(-{\rm Id})$ is positive.
\end{defi}

\begin{remk} This is equivalent to 
\begin{itemize}
\item for every $t\in (-\varepsilon, 0)$, ${\rm index}({\cal Q}_{V}(K, \Gamma(t)))=d-{\rm index}({\cal Q}_{K}(V, L_0))-1$;
\item for every $t\in (0, \varepsilon)$, ${\rm index}({\cal Q}_{V}(K, \Gamma(t)))=d-{\rm index}({\cal Q}_{K}(V, L_0))$.
\end{itemize}
\end{remk} 
\begin{defi} Let $\Gamma:[a, b]\rightarrow \Lambda$ be an arc. 
\begin{itemize}
\item A $t\in[a,b]$ is a {\sl crossing} for $\Gamma$ if $\Gamma(t)\in\Sigma$.
\item The arc $ \Gamma$ is in {\sl general position} with respect to $\Sigma$ if $\Gamma(a),\Gamma(b)\in\Lambda\setminus\Sigma$ and the path $\Gamma$ is { topologically} transverse to  $\Sigma$.
{\item The arc $ \Gamma$ is in {\sl D-general position} with respect to $\Sigma$ if $\Gamma(a),\Gamma(b)\in\Lambda\setminus\Sigma$ and the path $\Gamma$ is  transverse (in the differentiable sense) to  $\Sigma$.}

\end{itemize} 	
\end{defi}
\begin{remk}
    If $\Gamma:[a,b]\to\Lambda$ is in general position with respect to $\Sigma$, then each crossing for $\Gamma$ is isolated.
    {Let $[a, b]$ be fixed and let $k\in\N^*\cup\{ \infty\}$. Then, the set of $C^k$ arcs $\Gamma:[a, b]\to \Lambda$ that are in D-general position with respect to $\Sigma$ is open for the $C^1$-topology.}
\end{remk}

Let $\Gamma:[a,b]\to\Lambda$ be an arc in general position with respect to $\Sigma$. A crossing $t$ is called \textit{positive}, respectively \textit{negative}, if there exists $\epsilon>0$ such that the arc $\Gamma\vert_{[ t-\epsilon, t+\epsilon]}:[ t-\epsilon, t+\epsilon]\to\Lambda$ is positive, respectively negative.


\begin{defi}\label{defi MI for arc}
Let $\Gamma:[a,b]\to\Lambda$ be an arc in general position with respect to $\Sigma$. The Maslov index of $\Gamma$ with respect to {$V$ or $\cV$} is
\[
\mathrm{MI}(\Gamma) :=\]\[\mathrm{Card}\{t :\, t\text{ is a positive crossing for }\Gamma\}-\mathrm{Card}\{t:\, t \text{ is a negative crossing for }\Gamma\}\, .
\]
\end{defi}
The notion of Maslov index can be extended to Lagrangian paths that are not in general position.
\begin{defi}
Let $\Gamma:[a,b]\to\Lambda$ be a path such that $\Gamma(a),\Gamma(b)\in\Lambda\setminus\Sigma$ (not necessarily in general position with respect to $\Sigma$). Let { $\tilde\Gamma:[a,b]\to \Lambda$ be a smooth arc that is $C^1$-close to $\Gamma$ 
and is in general position with respect to $\Sigma$. }  
Then
\[
\mathrm{MI}(\Gamma):=\mathrm{MI}(\tilde\Gamma)\, .
\]
\end{defi}
For the existence of the perturbation $\tilde\Gamma$ of $\Gamma$ and for the independence of the previous definition from the choice of $\tilde\Gamma$ we refer to \cite{Arn72} or \cite{Capleem}.
~\newline

\begin{remk}\label{rmk invariance diffeo cs}
	Let $\phi$ be a conformally symplectic diffeomorphism on $\M$. Let $\Gamma:[a.b]\to\Lambda$ be a smooth path such that $\Gamma(a),\Gamma(b)\notin\Sigma$. Then $$D\phi(\Gamma):[a,b]\ni t\to D\phi(\Gamma(t))\in\Lambda$$ is still a smooth path such that $D\phi(\Gamma)(a),D\phi(\Gamma)(b)$ do not belong to
	\[
	\{ L\in\Lambda :\ L\cap D\phi(V)_{p(L)}\neq\{0\} \}\, ,
	\]
	where $D\phi(V)_x$ is the tangent bundle associated to the Lagrangian foliation $\phi(\cv)$.
	Then 
	the Maslov index $\mathrm{MI}(\Gamma)$, calculated with respect to the Lagrangian foliation $\cv$, is equal to the Maslov index $\mathrm{MI}(D\phi(\Gamma))$, calculated with respect to the Lagrangian foliation $\phi(\cv)$.
	
	Moreover, if $\phi(\cv)=\cv$, then \[\mathrm{MI}(\Gamma)=\mathrm{MI}(D\phi(\Gamma))\, .\] 
	In particular, for $\M=T^*M$, the Maslov index is invariant by \textit{vertical translations}, that is by any diffeomorphism of the form $\phi(p)=p+\eta\circ\pi(p)$, where $\eta$ is a closed 1-form in $T^*M$.
\end{remk}
\subsection{Dynamical Maslov index}
We now give the definition of dynamical Maslov index.

\begin{defi}
	Let $(\M,\omega)$ be a symplectic manifold that admits a Lagrangian foliation $\cv$. Let $(\phi_t)
	$ be an isotopy of conformally symplectic diffeomorphisms of $\M$. Let $L\in\Lambda$ and $[\alpha,\beta]\subset \R$ be such that $D\phi_\alpha(L),D\phi_\beta(L)\notin\Sigma$. Then
	\[
	\mathrm{DMI}(L,(\phi_t)_{t\in[\alpha,\beta]}):= \mathrm{MI}(\Gamma)\, ,
	\]
	where $\Gamma$ is the Lagrangian path $[\alpha,\beta]\ni t\mapsto \Gamma(t):=D\phi_t(L)\in\Lambda$ and the Maslov index $\mathrm{MI}(\Gamma)$ is calculated with respect to the Lagrangian foliation $\cv$.
\end{defi}

\subsection{Twist and Maslov index}

In this section, we work in $T^*M$ and we denote ${\cal V}(x)=T_x^*M$.

{ In the introduction, we gave the definition of an isotopy which twists the vertical. We can enhance this in the following way (we adopt the same notations $F$, $g_t$ and $\cal G_t$ as in the definition of twist of the  vertical).
{

\begin{defi}
An isotopy $(\phi_t)$ of conformally symplectic diffeomorphisms of $\M$ \emph{strictly} twists the vertical if it twists the vertical and 
at every $t_0\in\R$ \begin{itemize}
    \item for all $t\in(t_0,t_0+\epsilon)$ the image $F(\cal G_t)$ is the graph of a function $p\mapsto q=dg_t(p)$ where $g_t$ is a strictly convex function { i.e. such that $d^2g_t$ is positive definite};
    \item for all $t\in(t_0-\epsilon,t_0)$ the image $F(\cal G_t)$ is the graph of a function $p\mapsto q=dg_t(p)$ where $g_t$ is a strictly concave function { i.e. such that $d^2g_t$ is negative definite}.
    \end{itemize}
\end{defi}
}

Observe that the condition of convexity depends on the charts we choose (even if the property of twisting the vertical is invariant by symplectic conjugation that preserves the vertical foliation). This is { a motivation to}    give a result of the twist property that doesn't use any chart.

\begin{proposition}\label{Ptwistintrinsec}
Let $(\phi_t)$ be an isotopy of conformally symplectic diffeomorphisms of $T^*M$ that twists the vertical. Let $x\in T^*M$ and let $t_0\in\R$. We denote $x_t=\phi_t(x)$.
Let $K$ be a continuous Lagrangian bundle that is defined in a neighbourhood of $x_{t_0}$ and is transverse to the vertical bundle.   Then there exists $\varepsilon>0$ such that
\begin{itemize}
    \item $\forall t\in (t_0, t_0+\varepsilon),  Q_{K(x_t)}(D\phi_t\circ \big(D\phi_{t_0}\big)^{-1}V(x_{t_0}), V(x_t))
    $  is a negative semi-definite quadratic form;
    \item $\forall t\in (t_0-\varepsilon, t_0),  Q_{K(x_t)}(D\phi_t\circ \big(D\phi_{t_0}\big)^{-1}V(x_{t_0}), V(x_t))
    $   is a positive semi-definite quadratic form.
    
\end{itemize}
Moreover, when $(\phi_t)$ strictly twists the vertical,    the considered quadratic forms are negative definite or positive definite.
\end{proposition}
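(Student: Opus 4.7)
The plan is to exploit the vertically foliated Darboux chart $F=(F_1,F_2):{\cal U}\to(-a,a)^{2d}$ provided by the twist condition at $(t_0,x_0)$ with $x_0:=x_{t_0}$, reduce the semidefiniteness question to a concrete calculation with the ``chart-horizontal'' Lagrangian bundle, and then verify that the sign of the quadratic form is insensitive to the specific choice of $K$.

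The preliminary observation is that ${\cal G}_{t_0}$ is the local vertical leaf through $x_0$, so $T_{x_0}{\cal G}_{t_0}=V(x_0)$ and hence $L(t):=D\phi_t\circ (D\phi_{t_0})^{-1}V(x_0)$ coincides with $T_{x_t}{\cal G}_t$. Since $F({\cal G}_t)$ is the graph of $p\mapsto q=dg_t(p)$, differentiating gives, in the chart, $DF(L(t))=\{(M(t)v,v):v\in\R^d\}$ where $M(t):=d^2g_t(F_2(x_t))$ is symmetric, vanishes at $t=t_0$, and inherits the sign ($\ge 0$ for $t>t_0$, $\le 0$ for $t<t_0$, strict in the strict twist case) from the convexity/concavity of $g_t$.

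For the natural ``chart-horizontal'' Lagrangian bundle $K_0(x):=(DF)^{-1}(\R^d\times\{0\})$, I would compute the height directly from the definition (a short calculation using the standard convention $\omega((q_1,p_1),(q_2,p_2))=\langle p_1,q_2\rangle-\langle p_2,q_1\rangle$) and obtain
$$Q_{K_0(x_t)}(L(t),V(x_t))(u)=-\langle u,M(t)u\rangle,$$
which already yields the claimed sign for the particular choice $K=K_0$.

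The main obstacle is to show that the sign persists when one replaces $K_0$ by the given bundle $K$. To this end, I would write $DF(K(x_t))$ as the graph of a continuous symmetric matrix $A(x_t)$ (using transversality to $V$) and, parameterizing $L(t)$ and $V(x_t)$ as graphs over $K(x_t)$, arrive at
$$Q_{K(x_t)}(L(t),V(x_t))(u)=-\langle u,M(t)(I-A(x_t)M(t))^{-1}u\rangle.$$
The key algebraic point is the symmetric Neumann-series factorization $M(I-AM)^{-1}=M^{1/2}(I-M^{1/2}AM^{1/2})^{-1}M^{1/2}$, valid whenever $\|M^{1/2}AM^{1/2}\|<1$, which shows that $M(I-AM)^{-1}$ inherits the (semi)definite sign of $M$ when $M$ is sufficiently small. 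Since $M(t)\to 0$ as $t\to t_0$ and $A(x_t)$ is continuous, this applies for $t$ in a small interval around $t_0$, yielding the announced $\varepsilon$ and concluding the proof.
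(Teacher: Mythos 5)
Your proof is essentially correct and begins exactly as the paper does: take the vertically foliated chart from the twist condition, observe that $L(t)=T_{x_t}{\cal G}_t$ and that in the chart $L(t)$ is the graph $\{(M(t)v,v)\}$ with $M(t)=D^2g_t(F_2(x_t))$, then compute $Q_{K_0}(L(t),V)=-\langle\cdot,M(t)\cdot\rangle$ for the chart-horizontal bundle $K_0$. Where you diverge from the paper is in the passage to a general Lagrangian bundle $K$. The paper handles this step softly: it remarks that $K$ and $K_0$ can be joined by an isotopy of Lagrangian bundles transverse to $V$, and since the nullity of $Q_{K_s}(L(t),V)$ is fixed (it equals $\dim(L(t)\cap V(x_t))$) while the nonzero eigenvalues cannot cross zero, the signature is preserved; negative semi-definiteness (index $+$ nullity $= d$) then transfers from $K_0$ to $K$. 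You instead give a closed-form algebraic argument: write $DF(K(x_t))$ as the graph of a symmetric $A(x_t)$ over $K_0$, derive $Q_{K(x_t)}(L(t),V)(u)=-\langle u,M(t)(I-A(x_t)M(t))^{-1}u\rangle$, and invoke the factorization $M(I-AM)^{-1}=M^{1/2}(I-M^{1/2}AM^{1/2})^{-1}M^{1/2}$ to conclude that the sign of $M$ survives for $M$ small. This is a genuinely different, more explicit and quantitative resolution of the same step, and it has the merit of making the ``for $\varepsilon$ small'' condition concrete ($\|M(t)^{1/2}A(x_t)M(t)^{1/2}\|<1$). One small gap to close: as written, $M^{1/2}$ only exists for $M\ge 0$, i.e.\ for $t>t_0$; for $t<t_0$ you should set $N=-M\ge 0$ and factor $M(I-AM)^{-1}=-N^{1/2}(I+N^{1/2}AN^{1/2})^{-1}N^{1/2}$, which yields negative semi-definiteness by the same reasoning. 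With that one-line addition the argument is complete, and the strict-twist case follows because $M(t)$ is then definite, so $M^{1/2}$ (resp.\ $N^{1/2}$) is invertible and the conjugated form is definite.
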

\begin{proof}[Proof of Proposition \ref{Ptwistintrinsec}]
    We fix $\varepsilon>0$ and
a vertically foliated chart $F=(F_1, F_2): {\cal U}\rightarrow\R^{2d}$    such that  $x_{t_0}\in\cal U$, $F(x_{t_0})=0$   and for $t\in (t_0-\varepsilon, t_0+\varepsilon)$
\begin{itemize}
\item 
${\cal G}_t:=(\phi_t\circ\phi_{t_0}^{-1})\big(F^{-1}(\{ 0_{\R^d}\}\times (-\frac{a}{2}, \frac{a}{2})^d)\big) \subset{\cal U}$;
\item $F({\cal G_t})$ is the graph of a function $p\mapsto q=dg_t(p)$ where 
\begin{enumerate}
\item for $t\in [t_0, t_0+\varepsilon)$, $g_t$ is a convex function;
\item for $t\in (t_0-\varepsilon, t_0]$, $g_t$ is a concave function.
\end{enumerate}
\end{itemize}
As previously, we denote by ${\cal K}$ the  Lagrangian foliation with leaves $F^{-1}([-a, a]^d\times \{y_0\}$) and  by $K$ its tangent bundle.  For $t\in(t_0,t_0+\epsilon)$ (resp. $(t_0-\epsilon, t_0)$), the quadratic form  $${\cal Q}_{K(x_t)}(D(\phi_t\circ \phi_{t_0}^{-1})V(x_{t_0}), V(x_t))=-{\cal Q}_{K(x_t)}( V(x_t), D(\phi_t\circ \phi_{t_0}^{-1})V(x_{t_0}))$$ expressed in the chart $F$ is just $-D^2g_t(F_2(x_{t_0}))$ that is   a negative (resp. positive) semi-definite quadratic form  because the isotopy twists  the vertical.\\
When the isotopy strictly twists the vertical, we obtain in this case a  negative (resp. positive) definite quadratic form.

Observe that the bundle $K$ that we use in the proof is not necessarily the same bundle as   in the statement. But because the two are transverse to the vertical foliation  and we consider the height between the vertical and Lagrangian subspaces that are close to the vertical ($\varepsilon$ is small), the two indices are the same (we can build an isotopy between the two bundle that won't change the signature).
\end{proof}

\begin{proposition}\label{Ptermchpvect}
Let $(X_t)$ be a conformally symplectic vector field that generates an isotopy  of conformally symplectic diffeomorphisms of $T^*M$. We assume that at every $x\in T^*M$, there exists a vertically foliated chart $F=(F_1, F_2):\cU\to \R^{2d}$ such that if we write the vector field $X=(X_q, X_p)$ in this chart, then $\partial_pX_q$, which is always symmetric because the vector field is symplectic,  is positive definite.

Let $x\in T^*M$ and  let us  denote $x_t=\phi_t(x)$. Let $t_0\in\R$.
Let $K$ be a continuous Lagrangian bundle that is defined in a neighbourhood of $x_{t_0}$ and is transverse to the vertical bundle.   Then there exists $\varepsilon>0$ such that
\begin{itemize}
    \item $\forall t\in (t_0, t_0+\varepsilon),  Q_{K(x_t)}(D\phi_t\circ \big(D\phi_{t_0}\big)^{-1}V(x_{t_0}), V(x_t))$ is positive definite;
    \item $\forall t\in (t_0-\varepsilon, t_0),  Q_{K(x_t)}(D\phi_t\circ \big(D\phi_{t_0}\big)^{-1}V(x_{t_0}), V(x_t))$ is negative definite.
    \end{itemize}
 \end{proposition}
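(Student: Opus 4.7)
The plan is to adapt the proof of Proposition \ref{Ptwistintrinsec} to the present hypothesis by replacing the convexity of $g_t$ with a first-order expansion of the isotopy. Fix $x \in T^*M$ and $t_0 \in \R$, set $\tau = t - t_0$, and work in the vertically foliated chart $F = (F_1, F_2) : \cU \to \R^{2d}$ at $x_{t_0}$ given by the hypothesis; after shrinking $\cU$, I may assume $\partial_p X_q$ is uniformly positive definite on $\cU$. Let $\tilde K$ denote the horizontal Lagrangian bundle of $\cU$ tangent to the foliation whose leaves are $F_2 = \mathrm{const}$; it is transverse to the vertical bundle $V$.

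Since in the chart $\phi_t \circ \phi_{t_0}^{-1}(z) = z + \tau X(z) + O(\tau^2)$ (writing $X = X_{t_0}$), linearizing at $x_{t_0}$ and applying to $(0, \delta p) \in V(x_{t_0})$ yields
$$
D(\phi_t \circ \phi_{t_0}^{-1})(0, \delta p) = \bigl(\tau\, \partial_p X_q(x_{t_0})\, \delta p + O(\tau^2),\ (\mathrm{Id} + \tau\, \partial_p X_p(x_{t_0}))\, \delta p + O(\tau^2)\bigr).
$$
Reparametrizing the image by its $p$-component, the Lagrangian $V_t := D(\phi_t \phi_{t_0}^{-1}) V(x_{t_0})$ is, in the chart, the graph of $\delta q = A_\tau \delta p$ with $A_\tau = \tau\, \partial_p X_q(x_{t_0}) + O(\tau^2)$. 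A direct computation of $\mathcal{Q}_{\tilde K(x_t)}(V_t, V(x_t))$ in the chart, exactly parallel to the one at the end of the proof of Proposition \ref{Ptwistintrinsec}, represents this height (via the canonical identification $\R^{2d}/\tilde K \simeq \R^d$) by the symmetric matrix $A_\tau$. Since $\partial_p X_q(x_{t_0})$ is positive definite, $A_\tau$ is positive definite for small $\tau > 0$ and negative definite for small $\tau < 0$, giving the conclusion when $K = \tilde K$.

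To pass from $\tilde K$ to an arbitrary continuous Lagrangian bundle $K$ transverse to $V$, I invoke that the set of Lagrangian subspaces of $T_{x_{t_0}}T^*M$ transverse to $V(x_{t_0})$ is an affine space of symmetric $d \times d$ matrices, hence connected. I join $\tilde K(x_{t_0})$ to $K(x_{t_0})$ by a continuous path of Lagrangians transverse to $V(x_{t_0})$ and extend it to a continuous family $s \mapsto K(s)$ of Lagrangian bundles defined near $x_{t_0}$, each transverse to $V$. By compactness of $[0, 1]$, there exists $\varepsilon > 0$ such that for all $s \in [0, 1]$ and all $|\tau| < \varepsilon$ the bundle $K(s)$ is defined at $x_t$ and transverse to both $V(x_t)$ and $V_t$; then $\mathcal{Q}_{K(s)(x_t)}(V_t, V(x_t))$ is continuous in $s$ and its kernel is isomorphic to $V_t \cap V(x_t) = \{0\}$ for $\tau \neq 0$, so its signature is constant in $s$. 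This transfers the definiteness from $\tilde K$ to $K$. The only delicate step is this uniform-in-$s$ continuity argument; the preceding first-order calculation is a routine extension of the one used in Proposition \ref{Ptwistintrinsec}.
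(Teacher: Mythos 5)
Your proposal is correct and takes essentially the same approach as the paper: both expand $D(\phi_t\circ\phi_{t_0}^{-1})$ to first order in the vertically foliated chart, identify the matrix of the height $\mathcal{Q}_{\tilde K}(V_t,V(x_t))$ as $(t-t_0)\,\partial_pX_q(x_{t_0})+O((t-t_0)^2)$ (the paper works with the ODE for the block $b_t(d_t)^{-1}$, you with a direct Taylor expansion of the flow), and then reduce the case of a general transverse bundle $K$ to $\tilde K$ by an isotopy-of-bundles argument. Your write-up of that last reduction (a continuous path of transverse Lagrangian bundles plus constancy of the signature since the kernel $V_t\cap V(x_t)$ is trivial for $t\neq t_0$) makes explicit the paper's parenthetical remark at the end of the proof of Proposition \ref{Ptwistintrinsec}.
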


 \begin{proof}[Proof of Proposition \ref{Ptermchpvect}] As noticed in the proof of Proposition \ref{Ptwistintrinsec}, we only need to prove the result for the tangent space $K$ to  Lagrangian foliation  ${\cal K}$  with leaves $F^{-1}([-a, a]^d\times \{y_0\}$). 
 
 In the chosen chart, the Jacobian matrix of $X$ is
 $$DX(x_{t_0})=\begin{pmatrix}\partial_qX_q(x_{t_0})&\partial_pX_q(x_{t_0})\\
 \partial_qX_p(x_{t_0})&\partial_pX_p(x_{t_0})
 \end{pmatrix}$$
 and if we denote $$D\phi_t(x) \big(D\phi_{t_0}\big)^{-1}(x_{t_0})=\begin{pmatrix}a_t&b_t\\ c_t&d_t
 \end{pmatrix}$$
 then we have
\begin{equation}\label{Ehamlin}
\begin{cases}
  \dot b_t=\partial_qX_qb_t+\partial_pX_qd_t\\
  \dot d_t=\partial_qX_pb_t+\partial_pX_pd_t\, .
 \end{cases}
     \end{equation}
{    Hence uniformly in $x$ it holds   $d_t= {\bf 1}_d+o(t-t_0)$ and    $b_t=(t-t_0)\partial_pX_q(x_{t_0})+o((t-t_0)^2)$, which gives  $b_t(d_t)^{-1}= (t-t_0)\partial_pX_q(x_{t_0})+o((t-t_0)^2)$. Because $b_t(d_t)^{-1}$ is the matrix of $$Q_{K(x_t)}(D\phi_t\circ \big(D\phi_{t_0}\big)^{-1}V(x_{t_0}), V(x_t))$$ in the chart, this gives the wanted result.}
 \end{proof}

}
\begin{proposition}\label{PtwistMAslov} Let $(\phi_t)$ be an isotopy of conformally symplectic diffeomorphisms of $T^*M$ that twists the vertical. Then if $L\in\Lambda$ and $[\alpha, \beta]\subset \R$ are such that $D\phi_\alpha(L), D\phi_\beta(L)\notin \Sigma$, then
$${\rm DMI}(L, (\phi_{ t})_{t\in[\alpha, \beta]})\leq 0.$$
\end{proposition}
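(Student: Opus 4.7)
The goal is to show that every crossing of the arc $\Gamma(t):=D\phi_t(L)$ with the Maslov cycle $\Sigma$ is a \emph{negative} crossing in the sense of Definition \ref{defi MI for arc}, and therefore contributes $-1$ (or $0$) to the Maslov index, so that the sum $\mathrm{DMI}\leq 0$.

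My first step would be to reduce to the case where $\Gamma$ is in general position with respect to $\Sigma$: each crossing lies in $\Sigma^1$ and the path is topologically transverse there. For a generic initial Lagrangian subspace $L\subset T_x\M$ this holds, by a Sard-type argument applied to the evaluation $L\mapsto D\phi_{\cdot}(L)$, without touching the isotopy nor the twist hypothesis. Since the Maslov index is constant under $C^1$-small perturbations (cf.\ the extension of the definition given after Definition \ref{defi MI for arc}), it suffices to prove the inequality in this generic case.

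Next, fix a crossing $t_0$ with $L_0=\Gamma(t_0)\in\Sigma^1$, and work in a vertically foliated Darboux chart at $x_{t_0}$ provided by the twist hypothesis, in which $V$ corresponds to $\{0\}\times\R^d$ and the tangent bundle $K$ to the horizontal foliation $\cal K$ (with leaves $F^{-1}(I^d\times\{y_0\})$) corresponds to $\R^d\times\{0\}$. Writing
\[
D(\phi_t\circ\phi_{t_0}^{-1})(x_{t_0})=\begin{pmatrix} a(t) & b(t) \\ c(t) & d(t)\end{pmatrix},\qquad a(t_0)=d(t_0)=I,\ b(t_0)=c(t_0)=0,
\]
and representing $L_0$ as the graph $\{(T_0 p,p):p\in\R^d\}$ of a symmetric matrix $T_0$ whose one-dimensional kernel equals $L_0\cap V$, the arc $\Gamma(t)$ becomes the graph of $T(t)=(a(t)T_0+b(t))(c(t)T_0+d(t))^{-1}$. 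A direct computation using $T_0 v=0$ for $v\in\ker T_0$ then yields $\langle \dot T(t_0)v,v\rangle=\langle \dot b(t_0)v,v\rangle$.

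Proposition \ref{Ptwistintrinsec}, read in this chart, gives a definite semi-definite sign for the quadratic form $b(t)d(t)^{-1}\simeq (t-t_0)\dot b(t_0)$ on each side of $t_0$; in particular $\dot b(t_0)$ itself is semi-definite with the sign prescribed by the twist. Matching this with the coorientation of $\Sigma^1$ fixed in Subsection \ref{subsec defi MI}, the single eigenvalue of $T(t)$ that vanishes at $t_0$ moves in the direction that raises $\mathrm{index}\,Q_K(V,\Gamma(t))=\mathrm{index}\,(-T(t))$ by one, so the crossing is negative and contributes $-1$. Summing over all crossings gives $\mathrm{DMI}\leq 0$. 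The main obstacle is carefully aligning the three sign conventions at play—the coorientation of $\Sigma^1$ (distinguishing $\cal P_1$ from $\cal P_2$), the convention $\omega=-d\lambda$ determining the sign of the height form $Q_K$, and the twist convention that $g_t$ is convex for $t>t_0$—and handling the non-strict twist case via a further generic perturbation of $L$ ensuring that $\ker T_0$ is not contained in $\ker\dot b(t_0)$.
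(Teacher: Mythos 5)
Your chart computation for the \emph{strict} twist case is essentially sound and reaches the same conclusion as the paper, though by a more computational route: you work directly with the symmetric matrix $T(t)$ representing $\Gamma(t)$ as a graph over $V$, whereas the paper manipulates the height forms abstractly via the cocycle identity \eqref{prop:quadr:form} and the invariance of signature under $D(\phi_t\circ\phi_{t_0}^{-1})$. Both give that the eigenvalue of $T(t)$ crossing zero at $t_0$ has derivative $\langle\dot b(t_0)v,v\rangle>0$ on $\ker T_0$, hence $\mathrm{index}\,Q_K(V,\Gamma(t))$ increases through $t_0$ and the crossing is negative in the sense of the coorientation of $\Sigma_1$.

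The gap is in your last step. You propose to dispose of the non-strict case ``via a further generic perturbation of $L$ ensuring that $\ker T_0$ is not contained in $\ker\dot b(t_0)$.'' This cannot work in general: the matrix $\dot b(t_0)=\partial_pX_q(x_{t_0})$ is a datum of the isotopy along the trajectory $t\mapsto x_t=\phi_t(p(L))$, not of the initial Lagrangian $L$. The (non-strict) twist hypothesis only forces $\partial_pX_q$ to be positive \emph{semi}-definite, and in particular allows $\dot b(t_0)=0$ entirely (take $g_t$ with vanishing second derivative, or $g_t\equiv 0$ on a neighbourhood of $t_0$; zero is both convex and concave). When $\ker\dot b(t_0)=V(x_{t_0})$, the one-dimensional subspace $\ker T_0=\Gamma(t_0)\cap V(x_{t_0})$ lies inside it for \emph{every} choice of $L$, so the derivative of the vanishing eigenvalue is zero and your sign argument collapses; transversality to $\Sigma_1$ may also fail, so even the preliminary reduction ``for generic $L$'' is not available by perturbing $L$ alone. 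The paper's resolution is to perturb the \emph{isotopy}, not the Lagrangian: one adds to the generating vector field $X$ a small Hamiltonian vector field $Y$ whose Hamiltonian is strictly convex in the fibres, so that $\partial_p(X_q+Y_q)$ becomes positive definite; by Proposition \ref{Ptermchpvect} the new isotopy $(\tilde\phi_t)$ strictly twists, the path $(D\tilde\phi_tL)$ is $C^1$-close to $(D\phi_tL)$, hence has the same Maslov index, and the strict case applies. That perturbation of the dynamics, rather than of $L$, is the missing idea.
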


\begin{proof}[Proof of Proposition  \ref{PtwistMAslov}] 
{Let us first assume that $(\phi_t)$ is an isotopy {that satisfies the conclusion of Proposition\ref{Ptwistintrinsec} (with definite quadratic forms) in a neighborhood of $(D\phi_tL)_{t\in[\alpha,\beta]}$.} Perturbing $L$, we can assume that $(D\phi_{ t}L)_{t\in[\alpha, \beta]}$ intersects $\Sigma$ eventually only at the regular locus $\Sigma_1$. {We will prove that 
this implies that   $(D\phi_{ t}L)_{t\in[\alpha, \beta]}$ is actually {topologically} transverse to $\Sigma$ and that the Maslov index is non-positive.}}

Let $t_0\in [\alpha, \beta]$ be such that  $D\phi_{t_0}L\in \Sigma_1$. { We introduce $x_0=p(L)$ and $x_t:=\phi_t(x_0)$.
Let $K$ be a continuous Lagrangian bundle that is defined in a neighbourhood of $x_{t_0}$ and is transverse to the vertical bundle.   Then by hypothesis there exists $\varepsilon>0$ such that
\begin{itemize}
    \item $\forall t\in (t_0, t_0+\varepsilon),  Q_{K(x_t)}(D\phi_t\circ \big(D\phi_{t_0}\big)^{-1}V(x_{t_0}), V(x_t))$ is positive definite;
    \item $\forall t\in (t_0-\varepsilon, t_0),  Q_{K(x_t)}(D\phi_t\circ \big(D\phi_{t_0}\big)^{-1}V(x_{t_0}), V(x_t))$ is negative definite.
    
\end{itemize}

}

Then, for $t\in [t_0, t_0+\varepsilon)$, if  $L_t=D\phi_tL$,  we have\\
$${\cal Q}_{K(x_t)}(L_t, V(x_t))=$$
$${\cal Q}_{K(x_t)}(D(\phi_t\circ \phi_{t_0}^{-1})L_{t_0}, D(\phi_t\circ \phi_{t_0}^{-1})V(x_{t_0}))+{\cal Q}_{K(x_t)}(D(\phi_t\circ \phi_{t_0}^{-1})V(x_{t_0}), V(x_t)).$$
Then:
\begin{itemize}
\item because of the  invariance by symplectic diffeomorphisms, the signature of ${\cal Q}_{K(x_t)}(D(\phi_t\circ \phi_{t_0}^{-1})L_{t_0}, D(\phi_t\circ \phi_{t_0}^{-1})V(x_{t_0}))$ is equal to the signature of ${\cal Q}_{D(\phi_{t_0}\circ \phi_{t}^{-1})^{-1}K(x_t)}(L_{t_0}, V(x_{t_0}))$ and if we chose $\varepsilon$ small enough, this signature is equal to the signature of ${\cal Q}_{K(x_{t_0})}(L_{t_0}, V(x_{t_0}))$; 
\item  the quadratic form  $${\cal Q}_{K(x_t)}(D(\phi_t\circ \phi_{t_0}^{-1})V(x_{t_0}), V(x_t))=-{\cal Q}_{K(x_t)}( V(x_t), D(\phi_t\circ \phi_{t_0}^{-1})V(x_{t_0}))$$ 
is negative definite 
{because we assume that the isotopy $(\phi_t)$ strictly twists the vertical.}
\end{itemize}
As the index of  the sum of a quadratic form $Q$ and a negative {definite} quadratic form is at least { the sum of the index and the nullity} of $Q$, we deduce that for $t\in (t_0, t_0+\varepsilon)$ the  index of ${\cal Q}_{K(x_t)}(L_t, V(x_t))$ is at least the { sum of $1$}, which is the nullity of ${\cal Q}_{K(x_{t_0})}(L_{t_0},V(x_{t_0}))$, and the index of ${\cal Q}_{K(x_0)}(L_{t_0}, V(x_0))$. { Moreover, as the quadratic form  $-{\cal Q}_{K(x_t)}( V(x_t), D(\phi_t\circ \phi_{t_0}^{-1})V(x_{t_0}))$ 
 is close to $0$ for $\varepsilon$ small enough and because of the continuous dependence  of the eigenvalues on the quadratic form, the  index of ${\cal Q}_{K(x_t)}(L_t, V(x_t))$ is exactly the sum of $1$ and the index of ${\cal Q}_{K(x_0)}(L_{t_0}, V(x_0))$.} A similar argument gives that for $t\in (t_0-\varepsilon, t_0)$, the  index of ${\cal Q}_{K(x_t)}(L_t, V(x_t))$ is {exactly} the index of ${\cal Q}_{K(x_0)}(L_{t_0}, V(x_0))$
 This proves that  $(L_t)_{t\in [\alpha, \beta]}$ intersect $\Sigma_1$ { topologically} transversally and  in the negative sense and that the Maslov index is   non-positive.\\ 
 {Let now $(\phi_t)$ be an isotopy that twists the vertical (with no further assumptions). Consider the Lagrangian path $(D\phi_tL)_{t\in[\alpha,\beta]}$. 
 \begin{claim}
 There exists an isotopy $(\tilde\phi_t)$ of conformally symplectic diffeomorphisms of $\M$ such that
 \begin{itemize}
 \item $(D\tilde\phi_tL)_{t\in[\alpha,\beta]}$ is a smooth perturbation of $(D\phi_tL)_{t\in[\alpha,\beta]}$, and in particular, $\mathrm{MI}((D\phi_tL)_{t\in[\alpha,\beta]})=\mathrm{MI}((D\tilde\phi_tL)_{t\in[\alpha,\beta]})$;
 \item $(\tilde\phi_t)$ is an isotopy 
 {that satisfies the conclusion of Proposition\ref{Ptwistintrinsec} (with  definite quadratic forms) in a neighborhood of $(D\tilde\phi_tL)_{t\in[\alpha,\beta]}$.}
 \end{itemize}
 \end{claim}
 The claim immediately implies that the Maslov index of $(D\phi_tL)_{t\in[\alpha,\beta]}$ is non-positive, as desired.}\\
 {Let us now prove the claim.  
Because $(\phi_t)$ twists the vertical, we deduce from Proposition \ref{Ptwistintrinsec} that
there exists $\varepsilon>0$ such that
\begin{itemize}
    \item $\forall t\in (t_0, t_0+\varepsilon),  Q_{K(x_t)}(D\phi_t\circ \big(D\phi_{t_0}\big)^{-1}V(x_{t_0}), V(x_t))
    $ is a negative semi-definite quadratic form;
    \item $\forall t\in (t_0-\varepsilon, t_0),  Q_{K(x_t)}(D\phi_t\circ \big(D\phi_{t_0}\big)^{-1}V(x_{t_0}), V(x_t))
    $  is a positive semi-definite quadratic form.
    
\end{itemize}
{If the vector field associated to $(\phi_t)$ is written in the chart as $X=(X_q, X_p)$, we deduce from equations \eqref{Ehamlin}   that 
$$\frac{d}{dt}\big(b_t(d_t)^{-1}
\big)=\partial_qX_qb_t(d_t)^{-1}+\partial_pX_q-b_t(d_t)^{-1}\partial_qX_pb_t(d_t)^{-1}-b_t(d_t)^{-1}\partial_pX_p\, .
$$
Because $b_t(d_t)^{-1}$ is the matrix of $Q_{K(x_t)}(D\phi_t\circ \big(D\phi_{t_0}\big)^{-1}V(x_{t_0}), V(x_t))$ that is zero for $t=t_0$, we deduce that   
$$\frac{d}{dt}\big(b_t(d_t)^{-1}
\big)_{|t=t_0}= \partial_pX_q\, .
$$
and then that
 $\partial_pX_q$ is a positive semi-definite quadratic form. We now add to $X$ a small Hamiltonian vector-field $Y$ that is associated to a Hamiltonian $H$ that is strictly convex in the fiber direction. This implies that $\partial_pY_q$ is positive definite and so is $\partial_p(X+Y)$. {By Proposition \ref{Ptermchpvect},}  the isotopy that is associated to $X+Y$ is the wanted isotopy $(\tilde\phi_t)$.}}
\end{proof}

\begin{proposition}\label{PHconvextwist}
 Assume that $a:I\rightarrow \R$ and $H:I\times T^*M\rightarrow \R$ are smooth functions and let us use the notation $H_t(x)=H(t, x)$. We assume  that the Hessian of $H$ restricted to every vertical fiber is positive definite. We define the time-dependent vector field $X_t$ of $T^*M$ by
$$i_{X_t}\omega=dH_t-a(t)\lambda.$$
Then the isotopy defined by $X_t$ is conformally symplectic and  strictly twists the vertical.
\end{proposition}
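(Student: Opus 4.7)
The statement has two independent parts and I would treat them separately. For conformal symplecticity, I would apply Cartan's formula together with the closedness of $\omega$:
\[
L_{X_t}\omega = d(i_{X_t}\omega) + i_{X_t}d\omega = d(dH_t - a(t)\lambda) = -a(t)\,d\lambda = a(t)\omega,
\]
so $X_t$ is conformally symplectic with factor $a(t)$.

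For the strict twist property, I would verify the definition directly in Darboux coordinates. Fix $(t_0, x_0) \in \R \times T^*M$, set $x_{t_0} := \phi_{t_0}(x_0)$, and choose a Darboux chart $F = (q, p) : \cU \to (-a, a)^d \times (-a, a)^d$ centered at $x_{t_0}$ with $\lambda = p\,dq$. This chart is automatically vertically foliated since the fibers of $\pi$ correspond to $\{q = \mathrm{const}\}$. Decomposing $X_t = X_q\partial_q + X_p\partial_p$ and matching coefficients in $i_{X_t}\omega = dH_t - a(t)\lambda$ (the conformal term $a(t)\lambda = a(t)p\,dq$ has no $dp$-part, so does not affect $X_q$), I would obtain $X_q = \partial_p H_t$, and hence $\partial_p X_q = D^2_p H_t$, which is positive definite by hypothesis and uniformly so on compact subsets.

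The core step is then to show that the small Lagrangian piece $\cal G_t := (\phi_t\circ\phi_{t_0}^{-1})\bigl(F^{-1}(\{0\}\times(-a/2, a/2)^d)\bigr)$ is, for $|t-t_0|$ small, the graph of $dg_t$ for a function $g_t$ that is strictly convex when $t > t_0$ and strictly concave when $t < t_0$. Using $\phi_t\circ\phi_{t_0}^{-1} = \mathrm{Id} + (t-t_0)X_{t_0} + O((t-t_0)^2)$ in the chart, I would parametrize $\cal G_t$ by its initial $p$-coordinate, then invert the map to the final $p'$-coordinate to read $\cal G_t$ off as the graph $q = (t-t_0)\partial_p H_{t_0}(0, p') + O((t-t_0)^2)$. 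Since $\phi_t\circ\phi_{t_0}^{-1}$ is conformally symplectic and hence preserves Lagrangian subspaces, $\cal G_t$ is Lagrangian, so this graph expression must equal $dg_t(p')$ for some function $g_t$. Differentiating once more gives
\[
d^2 g_t(p') = (t-t_0)\, D^2_p H_{t_0}(0, p') + O((t-t_0)^2),
\]
and the positive-definiteness hypothesis yields $d^2 g_t$ positive definite for $t$ slightly larger than $t_0$ and negative definite for $t$ slightly less, which is precisely the strict twist condition.

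I do not expect a serious obstacle. The main technical point is the uniformity of the $O((t-t_0)^2)$ remainder across the $p'$-disc $(-a/2, a/2)^d$, which follows from smoothness of $H$ and of the flow together with compactness; note that Proposition \ref{Ptermchpvect} captures essentially the same linearization, and the computation above can also be viewed as extracting the convex primitive $g_t$ from its output.
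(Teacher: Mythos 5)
Your proposal is correct and follows essentially the same argument as the paper: work in a vertically foliated Darboux chart near the orbit, expand $\phi_t\circ\phi_{t_0}^{-1}$ to first order in $t-t_0$ to see that $\cal G_t$ is the graph of $dg_t$ with $D^2g_t = (t-t_0)\bigl(\partial_p^2 H_{t_0}+O(t-t_0)\bigr)$, and invoke positive definiteness of $\partial_p^2 H$. The paper leaves the conformal symplecticity unproved, so your Cartan-formula computation is a welcome addition; the only cosmetic difference is that you center the chart at $x_{t_0}=\phi_{t_0}(x_0)$ rather than at $x_0$ as in Definition~\ref{def-twist}, which is harmless since $\phi_{t_0}$ is a bijection.
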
 

\begin{proof}[Proof of Proposition \ref{PHconvextwist}.] For $(t_0, x_0)\in I\times T^*M$, we choose a vertically foliated Darboux  chart $F=(F_1, F_2):{\cal U} \rightarrow \R^d\times \R^d$  such that $F(x_0)=0$ and   $F(\cal U)=(-a, a)^{d}\times (-a, a)^d$.\\
We now work in this chart and denote by $H$ the Hamiltonian in this chart, which has a positive definite Hessian in the $p$ direction. We chose $\varepsilon_0 >0$ such that for all $t\in (t_0-\varepsilon_0, t_0+\varepsilon_0)$,
${\cal G}_t:=(\phi_t\circ\phi_{t_0}^{-1})\big(F^{-1}(\{ 0_{\R^d}\}\times (-\frac{a}{2}, \frac{a}{2})^d)\big) \subset{\cal U}$ and 
 $F({\cal G_t})$\footnote{$F({\cal G_t})$ is Lagrangian.} is the graph of a function $p\mapsto q=dg_t(p)$.
 We deduce from the Hamilton equations that there exists $\varepsilon\in (0, \varepsilon_0)$ such that uniformly for $y\in \ (-\frac{a}{2}, \frac{a}{2})^d$ and $t\in (t_0-\varepsilon, t_0+\varepsilon)\backslash \{t_0\}$ if we use the notation $\phi_t(0, y)=(q_t, p_t)$ then 
 $$D^2g_t(p_t)= (t-t_0)\big(\frac{\partial^2 H}{\partial p^2}(t_0, 0, y)+O(t-t_0)\Big).$$
 This gives the {(strict) }twist property. 
 \end{proof} 
\subsection{Maslov index and symplectic reduction}\label{sym reduction}
 On a cotangent bundle, the Maslov index is invariant by symplectic reduction. The result is due to C.~Viterbo \cite{Vit1987}. For sake of completeness, we recall here Viterbo's proof. 
	
	Let us start by showing the invariance of the Maslov index by symplectic reduction on a symplectic vector space. Let $(V,\omega)$ be a symplectic vector space of dimension $2d$. Denote by $\Lambda(V)$ the set of Lagrangian subspaces in $V$ and, for every subspace  $U\subset V$, by $\Lambda_U(V)$ the set of Lagrangian subspaces $L$ such that $L\cap U=\{0\}$.

Fix $L_0\in\Lambda(V)$\footnote{$L_0$ is the Lagrangian subspace with respect to which we calculate the Maslov index in $(V,\omega)$.}. Let $W\subset V$ be a coisotropic (not Lagrangian) vector subspace such that
\[
W^\perp\subset L_0\subset W\, ,
\]
where $W^\perp$ denotes the symplectic orthogonal with respect to $\omega$. Consider the quotient map
\begin{equation*}
\begin{aligned}
\Pi_{W^\perp}:& \,W\to W/W^\perp\\
&\,v\mapsto [v]\, ,
\end{aligned}
\end{equation*}
where $[v]=[u]$ if and only if $v-u\in W^\perp$. Observe that $\Pi_{W^\perp}$ is a surjective linear map.
Then, the quotient space inherits a symplectic 2-form $\omega_W$ from $\omega$, and $(W/W^\perp,\omega_W)$ is still a symplectic vector space. In particular, for every Lagrangian subspace $L$ of $V$ the image $\Pi_{W^\perp}(L\cap W)$ is still a Lagrangian space in $W/W^\perp$. Denote by \[\mathcal{P}_{W^\perp}:\Lambda(V)\hookrightarrow \Lambda(W/W^\perp)\] 
\[
L\mapsto \Pi_{W^\perp}(L\cap W)\, .
\]
{
 
The following holds.
\begin{claim}
 The map $\mathcal{P}_{W^\perp}$ restricted to $\Lambda_{W^\perp}(V)$ is a submersion.
\end{claim}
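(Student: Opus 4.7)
The plan is to identify the differential of $\mathcal{P}_{W^\perp}$ at $L\in\Lambda_{W^\perp}(V)$ with the natural restriction map on symmetric bilinear forms, and then observe that restriction to a subspace is always surjective.

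First I would set up the standard identifications of tangent spaces to Lagrangian Grassmannians. Recall the canonical identification $T_L\Lambda(V)\cong S^2(L^*)$: a curve $(L_t)$ with $L_0=L$ is sent to the symmetric bilinear form $Q(v_1,v_2)=\omega(\dot v_1(0),v_2(0))$, where $v_i(t)\in L_t$ are smooth sections and symmetry follows from $L$ being Lagrangian. Since $L\in\Lambda_{W^\perp}(V)$, symplectic duality gives $(L+W)^\perp=L\cap W^\perp=\{0\}$, hence $L+W=V$, and therefore $\dim(L\cap W)=d-k$ with $k=\dim W^\perp$. It follows that $\Pi_{W^\perp}\big|_{L\cap W}:L\cap W\to\bar L:=\mathcal{P}_{W^\perp}(L)$ is an isomorphism onto a Lagrangian of $W/W^\perp$, and the same identification at the target reads $T_{\bar L}\Lambda(W/W^\perp)\cong S^2(\bar L^*)\cong S^2((L\cap W)^*)$. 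A quick dimension count shows the target dimension $(d-k)(d-k+1)/2$ is indeed no larger than $\dim\Lambda_{W^\perp}(V)=d(d+1)/2$, which is consistent with a submersion.

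Next I would compute $d\mathcal{P}_{W^\perp}(L)$ in these coordinates. Choose a Lagrangian complement $L'$ of $L$ in $V$, so that nearby Lagrangians take the form $L_A=\{v+Av:v\in L\}$ for a symmetric $A:L\to L'$, and the tangent vector corresponding to $Q$ is $Q(v_1,v_2)=\omega(Av_1,v_2)$. Given $v_i(0)\in L\cap W$, I look for sections $v_i(t)=v_i(0)+t(u_i'+Av_i(0))+O(t^2)\in L_t\cap W$ with $u_i'\in L$. The condition $v_i(t)\in W$ forces $u_i'+Av_i(0)\in W$ to first order. Using $L+W=V$, decompose $Av_i(0)=\ell_i+w_i$ with $\ell_i\in L$ and $w_i\in W$, and set $u_i'=-\ell_i$; then $\dot v_i(0)=w_i\in W$ projects to $\pi(w_i)\in\bar L$. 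Writing the induced form and using $\omega(\ell_i,v_j)=0$ (both in $L$), one gets
\[
\bar Q(\bar v_1,\bar v_2)=\omega_W(\pi(w_1),\pi(v_2))=\omega(w_1,v_2)=\omega(Av_1,v_2)=Q(v_1,v_2).
\]
Thus $d\mathcal{P}_{W^\perp}(L)$ is precisely the restriction map $S^2(L^*)\to S^2((L\cap W)^*)$, which is clearly surjective (extend any form on $L\cap W$ by zero on a chosen complement in $L$). This proves $\mathcal{P}_{W^\perp}$ is a submersion.

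The main obstacle is the middle paragraph: verifying that the differential really is the restriction map. The decomposition $Av_i(0)=\ell_i+w_i$ is only defined modulo $L\cap W$, so one must check the final formula is independent of that choice, which is exactly where $L$ being Lagrangian (hence $\omega(\ell_i,v_j)=0$) is used. Once this calculation is in place, surjectivity is immediate and the submersion property follows at no further cost.
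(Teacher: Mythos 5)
Your proof is correct and takes essentially the same route as the paper: both set up local coordinates at $L$ and at $\bar L=\mathcal{P}_{W^\perp}(L)$ in which $\mathcal{P}_{W^\perp}$ becomes a linear map identified with restriction from $L$ to $L\cap W$, and both conclude by noting that this restriction is surjective. The paper parametrizes nearby Lagrangians by symmetric linear maps $B:L\to L'$ with the additional constraint $W^\perp\subset L'\subset W$ so that $B(L\cap W)\subset W$ and $\Phi(B)=\Pi_{W^\perp}\circ B|_{L\cap W}\circ(\Pi_{W^\perp}|_{L\cap W})^{-1}$ is immediately well-defined, then proves surjectivity of $\Phi$ by an explicit lift (choosing $L_1'\subset L'$ transverse to $W^\perp$ and extending to a Lagrangian). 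Your version works with the intrinsic identification $T_L\Lambda(V)\cong S^2(L^*)$ and an arbitrary Lagrangian complement $L'$, pushing the decomposition of $Av_i(0)$ onto $L+W=V$ and using $\omega|_{L\times L}=0$ to kill the ambiguity; the surjectivity step (extend a form by zero on a complement of $L\cap W$ in $L$) is cleaner. Both arguments are sound; the difference is one of packaging rather than substance.
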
 
\begin{proof}[Proof of the claim.]
Let us fix $L\in\Lambda_{W^\perp}(V)$ and let $L'\in\Lambda(V)$ be such that $W^\perp\subset L'\subset W$ and $L\cap L'=\{0\}$. The set $U=\{ \tilde L\in\Lambda(V) :\ \tilde L\cap L'=\{0\} \}$ is an open neighbourhood of $L$. If $\tilde L\in U$, then there exists a unique linear map $B=B_{\tilde L}: L\to L'$ such that
$$\tilde L=\{ v+Bv; v\in L\}$$
and $B$ satisfies the symmetry condition 
\begin{equation}\label{Esym}\forall \ell, \ell'\in L, \omega(\ell, B\ell')+\omega(B\ell, \ell')=0.\end{equation}
Moreover, if $B:L\to L'$ satisfies the symmetry condition \eqref{Esym}, then the set $\tilde L=\{ v+Bv; v\in L\}$ is a Lagrangian subspace of $V$ that is transverse to $L'$.\\
We denote by $\cb$ the set of linear maps from $L$ to $L'$ that satisfy the symmetry condition \eqref{Esym}. It is a finite dimensional vector space that is the image of the chart 
$$\tilde L\in U\mapsto B_{\tilde L}\in \cb.$$
Similarly, if $\overline{L}=\P_{W^\bot}(L)$ and $\overline{L}'=\P_{W^\bot}(L')$, the set 
$$V=\{\tilde L\in \Lambda(W/W^\bot); \tilde L\cap \overline{L}'=\{ 0\}\}$$ is an open neighbourhood of $\overline{L}$ in $\Lambda(W/W^\bot)$. The map that associate to every $\tilde L\in V$ the linear map $\overline B_{\tilde L}:\overline{L}\to \overline{L}'$ such that $\tilde L=\{\ell+\overline B_{\tilde L}\ell;\ \ell\in\overline L\}$ is a chart whose image is the finite dimensional vector space $\overline{\cb}$ of linear maps $\overline B:\overline{L}\to \overline{L}'$ such that
$$\forall \ell, \ell'\in \overline{L},\omega_W(\ell, \overline B\ell')+\omega_W(\overline B\ell, \ell')=0.
$$
In these charts, the map $\P_{W^\bot}$ is read
$\Phi: \cb\to\overline{\cb}
$ where
$$\Phi(B)=\Pi_{W^\perp}\circ B|_{L\cap W}\circ (\Pi_{W^\perp}\vert_{L\cap W})^{-1}.$$
Hence $\Phi$ is a linear map. This is then a submersion onto its image that is a linear subspace of $\overline{\cb}$. If we prove that $\Phi(\cb)=\overline{\cb}$, we will deduce that $\P_{W^\bot}$ is a submersion. Thus, let $\overline{B}_0\in \overline{\cb}$ and let $\overline{L}_0$ be the graph of $\overline{B}$. We choose a linear subspace  $L'_1$ of $L'$   that is transverse to $W^\bot$ and define $B_1:L\cap W\to L'_1$ as
$$\forall v\in L\cap W, B_1(v)=\Pi_{W^\perp}|_{L'_1}^{-1}\circ \overline{B}_0\circ \Pi_{W^\perp}(v).$$
When $v, w\in L\cap W$, we have  
\begin{eqnarray*}\omega(v, B_1w)+\omega(B_1v, w)&=& 
\omega(v,\Pi_{W^\perp}|_{L'_1}^{-1}\circ \overline{B}_0[w])
+\omega(\Pi_{W^\perp}|_{L'_1}^{-1}\circ \overline{B}_0[v],w)\\
&=&\omega_W([v],  \overline{B}_0[w])
+\omega_W(  \overline{B}_0[v],[w])=0\, .\\
\end{eqnarray*}
Hence $B_1:L\cap W\to L'_1$ satisfies the symmetry condition and then its graph $L_2$ is an isotropic subspace of $(L\cap W)+L'\subset W$ such that $L_2\cap L'=\{ 0\}$. We can choose a Lagrangian subspace $\tilde L$ of $V$ that contains $L_2$ and is transverse to $L'$: $\tilde L$ is then the graph of a map $B_2:L\to L'$ that satisfies the symmetry condition and contains $L_2$. Then the graph of $\Phi(B_2)$ is a Lagrangian subspace of $W/W^\bot$ that contains $\overline{L}_0$, hence is equal to $\overline{L}_0$ and we deduce that $\Phi(B_2)=\overline{B}_0$ and $\Phi$ is surjective.
\end{proof}}


Denote by $i:\Lambda_{W^\perp}(V)\hookrightarrow\Lambda(V)$ the standard inclusion. 
This is 
a submersion. 

\begin{lemma}\label{sym red for vec space}
    Let $t\in [0,1]\mapsto \gamma(t)\in\Lambda_{W^\perp}(V)$ be an arc such that $\gamma(0)\cap L_0=\gamma(1)\cap L_0=\{0\}$. Then
    \[
    \mathrm{MI}(i\circ\gamma)=\mathrm{MI}(\mathcal{P}_{W^\perp}\circ\gamma)\, ,
    \]
    {where 
    \begin{itemize}
        \item the Maslov index $ \mathrm{MI}(i\circ\gamma)$ is calculated with respect to $L_0$ in $\Lambda(W)$;
        \item the Maslov index $ \mathrm{MI}(\mathcal{P}_{W^\perp}\circ\gamma)$ is calculated with respect to $\mathcal{P}_{W^\perp}(L_0)$ in $\Lambda(W/W^\perp)$.
    \end{itemize}}
\end{lemma}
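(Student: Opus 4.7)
The plan is to establish a sign-preserving bijection between the crossings of $\gamma$ with the Maslov cycle in $\Lambda(V)$ relative to $L_0$ and those of $\bar\gamma:=\mathcal{P}_{W^\perp}\circ\gamma$ with the Maslov cycle in $\Lambda(W/W^\perp)$ relative to $\mathcal{P}_{W^\perp}(L_0)=L_0/W^\perp$; summing the signed crossings on both sides will give the equality.

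First I would verify that the dimensions of intersections match. For any $L \in \Lambda_{W^\perp}(V)$, the assumption $W^\perp \subset L_0$ makes $\Pi_{W^\perp}$ send $L\cap L_0$ into $\mathcal{P}_{W^\perp}(L)\cap\mathcal{P}_{W^\perp}(L_0)$; the assumption $L \cap W^\perp = \{0\}$ makes this map injective; and any class in $\mathcal{P}_{W^\perp}(L)\cap\mathcal{P}_{W^\perp}(L_0)$ lifts to an element of $L_0 \subset W$, which must then belong to $L \cap L_0$, giving surjectivity. Hence $\dim(L\cap L_0)=\dim(\mathcal{P}_{W^\perp}(L)\cap\mathcal{P}_{W^\perp}(L_0))$, so that $\gamma(t)\in\Sigma$ in $\Lambda(V)$ exactly when $\bar\gamma(t)$ lies in the Maslov cycle of $\Lambda(W/W^\perp)$. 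Using that $\mathcal{P}_{W^\perp}$ is a submersion on $\Lambda_{W^\perp}(V)$ (by the claim preceding the lemma), I would then perturb $\gamma$ smoothly with fixed endpoints, staying inside the open set $\Lambda_{W^\perp}(V)$, so that $\bar\gamma$ is in general position with respect to the Maslov cycle of $\Lambda(W/W^\perp)$; by the dimension correspondence, $\gamma$ is then also in general position with respect to $\Sigma$ in $\Lambda(V)$, and both Maslov indices we wish to compare are unchanged.

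Next I would match the sign of each crossing. Since $\gamma(t)+W=V$ (because $(\gamma(t)+W)^\perp=\gamma(t)\cap W^\perp=\{0\}$), the intersections $\gamma(t)\cap W$ form a smooth family of subspaces of constant dimension $\dim W - d$. Thus at a simple crossing $t_0$ any nonzero $v\in\gamma(t_0)\cap L_0\subset W$ admits a smooth lift $v(t)\in\gamma(t)\cap W$ with $v(t_0)=v$, and $\bar v(t):=\Pi_{W^\perp}(v(t))$ is then a smooth lift of $\bar v$ inside $\bar\gamma(t)$. The Robbin--Salamon crossing form at $t_0$, whose sign under the paper's coorientation distinguishes positive from negative crossings, evaluates to
\[
\omega\bigl(v,\dot v(t_0)\bigr)=\omega_W\bigl(\bar v,\dot{\bar v}(t_0)\bigr),
\]
the equality holding because $v,\dot v(t_0)\in W$ and $\omega_W$ is the descent of $\omega|_W$. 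Hence under the linear isomorphism $\Pi_{W^\perp}\colon\gamma(t_0)\cap L_0\to\bar\gamma(t_0)\cap\mathcal{P}_{W^\perp}(L_0)$ the two crossing forms coincide, each simple crossing contributes equally on both sides, and the equality of Maslov indices follows upon summing.

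The main obstacle I anticipate is reconciling the paper's coorientation convention --- defined by indices of height forms $\mathcal{Q}_K$ for a chosen Lagrangian complement $K$ --- with the intrinsic crossing-form description used above. Concretely, one must check that by selecting $K \subset V$ whose image under $\Pi_{W^\perp}$ is a Lagrangian complement of $L_0/W^\perp$ in $W/W^\perp$, the derivative in $t$ of $\mathcal{Q}_K(L_0,\gamma(t))$ at a simple crossing restricts to $v\mapsto\omega(v,\dot v(t_0))$ on $\gamma(t_0)\cap L_0$, and that an analogous statement holds downstairs for $\mathcal{Q}_{\bar K}(L_0/W^\perp,\bar\gamma(t))$. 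Once this dictionary is established, the positive--negative sign rules of the two definitions agree and the argument above transports the sign bookkeeping intact through $\Pi_{W^\perp}$.
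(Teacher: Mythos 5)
Your proof is correct and follows the same basic strategy as the paper: reduce to a path in general position, identify the pullback of the Maslov cycle under $\mathcal{P}_{W^\perp}$ restricted to $\Lambda_{W^\perp}(V)$ with the Maslov cycle upstairs, and then match signed crossings. Your ``dimension-match'' argument (that $\Pi_{W^\perp}$ restricts to an isomorphism $L\cap L_0\to\mathcal{P}_{W^\perp}(L)\cap\mathcal{P}_{W^\perp}(L_0)$ whenever $L\cap W^\perp=\{0\}$ and $W^\perp\subset L_0\subset W$) is a clean restatement of the paper's Claim about $i^{-1}(\Sigma)=\mathcal{P}_{W^\perp}^{-1}(\overline{\Sigma})\cap\Lambda_{W^\perp}(V)$, and the general-position reduction via the submersion property is likewise the same.

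Where you genuinely differ is the treatment of signs. The paper disposes of the sign bookkeeping in one sentence, asserting that the coorientations of $i^{-1}(\Sigma_1)$ and $\mathcal{P}_{W^\perp}(\Sigma_1)$ are ``determined'' by that of $\Sigma_1$, without checking compatibility with the intrinsic coorientation of $\overline\Sigma_1$. You instead verify it through the Robbin--Salamon crossing form $v\mapsto\omega(v,\dot v(t_0))$: the lift $v(t)\in\gamma(t)\cap W$ exists because $\gamma(t)+W=V$ makes $\gamma(t)\cap W$ a smooth constant-rank family, and the identity $\omega(v,\dot v(t_0))=\omega_W(\bar v,\dot{\bar v}(t_0))$ transports the quadratic form through the reduction intact. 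This makes the sign correspondence explicit rather than asserted. The ``main obstacle'' you flag --- that the sign of the crossing form matches the paper's $\mathcal{Q}_K$-based coorientation --- does indeed hold: differentiating $t\mapsto\mathcal{Q}_K(L_0,\gamma(t))$ at $t_0$ and restricting to the one-dimensional kernel $\gamma(t_0)\cap L_0$ gives exactly $\omega(v,\dot v(t_0))$ for any lift $v(t)\in\gamma(t)$ with $v(t)-v\in K$, so a positive crossing form corresponds to the eigenvalue passing from negative to positive, i.e.\ the index dropping from $n+1$ to $n$, which is the paper's definition of a positive crossing. Spelling that short computation out would complete the proof; as written it is a correct plan that is arguably more careful about the part the paper treats most briefly.
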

\begin{proof}
 Up to slightly perturb the path, we can assume that $\gamma$ is in D-general position with respect to $\Sigma:=\{L\in\Lambda(V) ;\ L\cap L_0\neq \{0\}\}$.
{The subspace  $L_0':= \mathcal{P}_{W^\perp}(L_0)$ of $W/W^\perp$ is  Lagrangian and we denote $\overline{\Sigma}=\{L'\in\Lambda(W/W^\perp) ;\ L'\cap L_0'\neq\{0\}\}$. Observe that $\mathcal{P}_{W^\perp}^{-1}(\overline{\Sigma})\subset\Sigma$ because $W^\bot\subset L_0$.

Since the maps $i$ and $\mathcal{P}_{W^\perp}$ are submersions, we have
\begin{itemize}
    \item the path $i\circ\gamma$  is in D-general position with respect to $\Sigma$;
    \item the path $\mathcal{P}_{W^\perp}\circ\gamma$ is in D-general position with respect to $\overline{\Sigma}$.
  
\end{itemize}}
 Moreover, the choice of a coorientation of $\Sigma$ determines a coorientation both on $i^{-1}(\Sigma_1)$ and on $\mathcal{P}_{W^\perp}\circ i^{-1}(\Sigma_1)$.
 Following \cite{Vit1987}, we claim that

\begin{claim}\label{claim23}
 \[
 i^{-1}({\Sigma)=
  \mathcal{P}_{W^\perp}^{-1}(\overline{\Sigma}) \cap \Lambda_{W^\bot}(V)}\, .
 \]\end{claim}
 \begin{proof}[Proof of the claim.]We first observe that on one side
 \[
  i^{-1}({\Sigma)}=
  \{ L\in\Lambda(V) :\ L\cap W^\perp=\{0\}\text{ and } L\cap L_0\neq\{0\} \}\,.
 \]
On the other side we have
 \[
 \mathcal{P}_{W^\perp}^{-1}({\overline{\Sigma}}){\cap \Lambda_{W^\bot}(V)}=
 \]
 \[
 \{ L\in{ \Lambda_{W^\bot}(V)} ; ((L\cap W+W^\perp)/W^\perp)\cap ((L_0\cap W+W^\perp)/W^\perp)\neq \{0\} \}\, ;
 \]
since $W^\perp\subset L_0\subset W$, {we have $L_0=L_0\cap W+W^\perp$} and  any $L\in \mathcal{P}_{W^\perp}^{-1}({\overline{\Sigma}} ){\cap \Lambda_{W^\bot}(V)}$ is so that\\
 $
 ((L\cap W+W^\perp)/W^\perp)\cap ({L_0 }/W^\perp)=$\hglue 5 truecm\[ (L\cap L_0\cap W+W^\perp)/W^\perp=(L\cap L_0+W^\perp)/W^\perp\neq \{0\}\, .
 \]
 Since $L\cap W^\perp=\{0\}$
 \[
 (L\cap L_0+W^\perp)/W^\perp\neq \{0\}\quad\Leftrightarrow\quad L\cap L_0\neq \{0\}\, .
 \]
 We so conclude that
\begin{eqnarray*}
 \mathcal{P}_{W^\perp}^{-1}({\overline{\Sigma}}){\cap \Lambda_{W^\bot}(V)}&=&
 \{
 L\in \Lambda(V) :\ L\cap W^\perp=\{0\}\text{ and } L\cap L_0\neq \{0\}
 \}\\
 &=&i^{-1}(\{ L\in\Lambda(V) :\ L\cap L_0\neq\{0\}\})\, .
\end{eqnarray*}
\end{proof}
 
Since $i$ is a submersion, the number of crossings of $\gamma$ with $ i^{-1}({ \Sigma})$ is equal to the number of crossings of $i\circ\gamma$ with ${ \Sigma}$. {Since also $\mathcal{P}_{W^\perp}$ is a submersion and since $\mathcal{P}_{W^\perp}(\Sigma)=\bar\Sigma$}, 
we conclude that the number of crossings of $\gamma$ with $ i^{-1}({ \Sigma})$ is equal to the number of crossings of $\mathcal{P}_{W^\perp}\circ\gamma$ with ${ \overline{\Sigma}}$.
 
Since the coorientation on $i^{-1}(\Sigma)$ and on $\mathcal{P}_{W^\perp}\circ i^{-1}(\Sigma)$ is determined by the coorientation of $\Sigma$, we conclude that actually the number of positive (resp. negative) crossings of $i\circ\gamma$ corresponds to the number of positive (resp. negative) crossings of $\mathcal{P}_{W^\perp}\circ\gamma$. By the definition of Maslov index, we obtain the sought result.
\end{proof}

{We want now to prove the invariance of the Maslov index by symplectic reduction on the cotangent bundle $\M$, endowed with the symplectic form $\omega$. Let $\cv$ be the lagrangian foliation whose fibers of the associated tangent Lagrangian bundle are the vertical Lagrangian subspaces. Let $\cal W\subset \M$ be a coisotropic submanifold {and let $i_{\cal W}:\cal W\to\M$ be the canonical injection}; the characteristic foliation of $\cal W$, { denoted by $\cal W^\bot$,} { admits for tangent bundle} 
${T_x(\cal W^\bot)=}\mathrm{ker}(i^*_{\cal W}\omega)(x)=(T_x\cal W)^\perp$.

Assume that, for every $x\in { \cal W}$ it holds
\[
(T_x\cal W)^\perp\subset T_x\cv\subset T_x\cal W\, .
\]
{We assume that the symplectic reduction of $\cal W$ is a true symplectic manifold that we denote by $\cal R: \cal W\to \cal W/\cal W^\bot$. When $x\in\cal W$ and $L\in \Lambda_{T_x\cal W^\bot}(T_x\M)$, we denote
$$\cal P(L)=D\cal R(x)L=(L+T_x\cal W^\bot)/T_x\cal W^\bot\in \Lambda(\cal W/\cal W^\bot).$$
Then $\cal P$ is a submersion 
 from $\Lambda_{\cal W^\bot}(\M)_{|\cal W}$ to $\Lambda({\cal W}/\cal W^\bot)$.
}

We denote  $\Sigma(\M):= \{ L\in \Lambda(\M) ;\ L\cap T_{p(L)}\cv\neq \{0\} \}$ {
and $\Sigma(\cal W/\cal W^\bot)=\{ L\in \Lambda(\cal W/\cal W^\bot); L\cap T_{p(L)}\cal P(\cal V)\neq \{ 0\}\}$
}.  
\begin{lemma}\label{lemma almost Viterbo}
	Let $\Gamma:[a,b]\to \Lambda(\M)$ be a smooth arc such that 
	\begin{itemize}
		\item $\Gamma(a),\Gamma(b)\notin \Sigma(\M)$;
		\item {$\Gamma$ is in $D$-general position with respect to the fibered singular cycle $\Sigma(\M)$;}
		\item at every point the path has trivial intersection with the tangent bundle of the characteristic foliation of $\cal W$, i.e. 
	\[
	\Gamma(t)\cap (T_{p(\Gamma(t))}\cal W)^\perp=\{0\}\quad\forall t\in[a,b]\, .
	\]
	\end{itemize}
Then
\[
\mathrm{MI}(\Gamma)= \mathrm{MI}(\cal P\circ\Gamma)\, ,
\]
where 
    \begin{itemize}
        \item the Maslov index $ \mathrm{MI}(\Gamma)$ is calculated with respect to $T\cal V$ in $\Lambda(\M)$;
        \item the Maslov index $ \mathrm{MI}(\cal P\circ\Gamma)$ is calculated with respect to $\mathcal{P}(\cal V)$ in {$
        \Lambda(\cal W/\cal W^\bot)$.}
    \end{itemize}
\end{lemma}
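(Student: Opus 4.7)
The plan is to reduce to the linear Lemma \ref{sym red for vec space} by localizing at each crossing. By the $D$-general position hypothesis, the set of $t \in [a,b]$ with $\Gamma(t) \in \Sigma(\M)$ is finite, say $\{t_1,\ldots,t_N\}$. Since $\cal P$ is a submersion on $\Lambda_{W^\perp}(\M)|_{\cal W}$ and the transversality hypothesis $\Gamma(t)\cap (T\cal W)^\perp = \{0\}$ ensures that $\Gamma$ stays in this domain, the same argument as in Claim \ref{claim23} (applied pointwise) shows that $\cal P \circ \Gamma$ is also in $D$-general position with respect to $\Sigma(\cal W/\cal W^\perp)$, with exactly the same crossing times $t_i$. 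Since the Maslov index is additive under concatenation of arcs whose internal endpoints avoid the singular cycle, it then suffices to show that each crossing contributes the same signed count on both sides.

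To handle a fixed crossing $t_0$ with $x_0 := p(\Gamma(t_0)) \in \cal W$, I would choose a Darboux chart $\Psi : \cU \to \R^{2d}$ around $x_0$ which simultaneously straightens $\cal V$ to the vertical foliation and $\cal W$ to an affine coisotropic subspace; such a chart exists by a relative Darboux theorem, using that $(T\cal W)^\perp \subset T\cal V \subset T\cal W$. In this chart, the symplectic reduction $\cal P$ becomes the linear projection $\cal P_{W^\perp}$ of Lemma \ref{sym red for vec space}, and $\Gamma$ restricted to a small interval around $t_0$ is identified with an arc $\tilde\gamma$ in the Lagrangian Grassmannian of $(\R^{2d},\omega_0)$, lying in $\Lambda_{W^\perp}(\R^{2d})$ by the transversality hypothesis. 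Lemma \ref{sym red for vec space} then gives $\mathrm{MI}(i \circ \tilde\gamma) = \mathrm{MI}(\cal P_{W^\perp} \circ \tilde\gamma)$, and by Remark \ref{rmk invariance diffeo cs} the chart preserves the manifold Maslov indices, so these linear indices match the local contributions of $\Gamma$ and $\cal P\circ \Gamma$ around $t_0$. Summing over crossings then yields the desired equality.

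The main obstacle is the compatibility of coorientations: one must verify that what counts as a positive or negative crossing for $\Gamma$ at $t_0$ (defined in Subsection \ref{subsec defi MI} via the index of the quadratic form $\mathcal{Q}_K(V, L)$ for an auxiliary Lagrangian foliation $\cal K$ transverse to $\cal V$) agrees with the corresponding notion for $\cal P \circ \Gamma$ at $t_0$. By Proposition \ref{Pcoorientation} the manifold coorientation does not depend on the choice of $\cal K$, so I would select $\cal K$ adapted to the reduction, that is, with $T\cal K \subset T\cal W$ at $x_0$; this is possible because $K$ only needs to be transverse to $V$ and $W^\perp \subset V \subset W$. With this choice $\cal P_{W^\perp}(K)$ is a Lagrangian in $W/W^\perp$ transverse to $\cal P_{W^\perp}(V)$, and a short linear-algebraic computation (of the same flavor as the bullet point verifying $\mathcal{Q}_V(K,L)\circ P^V|_L = -\mathcal{Q}_K(V,L)\circ P^K|_L$) shows that $\mathcal{Q}_K(V,L)$ factors through $\cal P_{W^\perp}|_{L \cap W}$ as $\mathcal{Q}_{\cal P(K)}(\cal P(V), \cal P(L))$, so the two quadratic forms have the same index and the two coorientations agree. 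This is the only subtle step; once it is established, the localization and Lemma \ref{sym red for vec space} do the rest.
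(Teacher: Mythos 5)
Your overall plan --- localize at each isolated crossing, identify the local contribution with the linear situation of Lemma \ref{sym red for vec space}, and sum --- is exactly the paper's argument; the paper uses a trivialization of $\Lambda(U)$ over a small neighbourhood of $p\circ\Gamma(t)$ rather than a simultaneous Darboux chart straightening both $\mathcal V$ and $\mathcal W$, but these localization devices are interchangeable here, and the observation that $\mathcal P\circ\Gamma$ has the same crossing times by the argument of Claim \ref{claim23} is also what the paper uses.

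The step you flag as the only subtle one --- matching coorientations --- is, however, exactly where your proposal breaks. You propose to choose the auxiliary Lagrangian $K$ (used in Subsection \ref{subsec defi MI} to define the coorientation of $\Sigma_1$ at the crossing $L_0$) so that $K\subset T_{x_0}\mathcal W$. This is impossible unless the reduction is trivial: $K$ must also be transverse to $V=T_{x_0}\mathcal V$, and $V\subset T_{x_0}\mathcal W$ by hypothesis; so $K\oplus V=T_{x_0}\mathcal M$ together with $K,V\subset T_{x_0}\mathcal W$ forces $T_{x_0}\mathcal W=T_{x_0}\mathcal M$, i.e.\ $\mathcal W$ has codimension zero. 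Consequently the ``short linear-algebraic computation'' you describe cannot even be set up with such a $K$, and your coorientation argument does not go through. The paper does not re-verify coorientation compatibility at this point: it is absorbed into the proof of Lemma \ref{sym red for vec space}, which notes that $\mathcal P_{W^\perp}$ is a submersion with $\mathcal P_{W^\perp}^{-1}(\overline\Sigma)\cap\Lambda_{W^\perp}(V)=\Sigma\cap\Lambda_{W^\perp}(V)$, so a chosen coorientation of $\Sigma$ induces compatible coorientations on $i^{-1}(\Sigma)$ and on $\overline\Sigma$, and the signed crossing counts agree. Invoking that lemma at each crossing, as the paper does, leaves no separate coorientation computation to carry out. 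If you insist on a hands-on verification, you would instead need to take $K$ transverse to $V$, to $L_0$, \emph{and} to $W^\perp$ (but certainly not contained in $W$), check that $\mathcal P(K)$ is a Lagrangian of $W/W^\perp$ transverse to $\mathcal P(V)$ and $\mathcal P(L_0)$, and then compare the index jumps of $\mathcal Q_K(V,\cdot)$ and $\mathcal Q_{\mathcal P(K)}(\mathcal P(V),\cdot)$ along $\Gamma$ and $\mathcal P\circ\Gamma$ --- a different and more delicate argument than the factorization you outline.
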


\begin{proof}
	 Since $\cal P$ is a submersion { and $\cal P^{-1}(\Sigma(\cal W/\cal W^\bot))\cap \Lambda_{T\cal W^\bot}(\M)=\Sigma(\M)_{|\cal W}$}, also the path $\cal P\circ\Gamma$ is in {$D$-general position} with respect to { $\Sigma(\cal W/\cal W^\bot)$}. 
	 In order to conclude, it is then sufficient to calculate the Maslov index of a sub-path of $\Gamma$, around a (isolated) crossing $t$. Let $U\subset\M$ be a neighborhood of $p\circ\Gamma(t)$ and let 
	\[
	\Gamma\vert_{[t-\epsilon,t+\epsilon]}:[t-\epsilon,t+\epsilon]\to \Lambda(U)\, ,
	\]
	be a Lagrangian path with only an isolated, transverse crossing at $t$. 
	Let us trivialise $\Lambda(U)$ as $U\times \Lambda(T_{p\circ\Gamma(t)}\M)$. Similarly, trivialise the image $\mathcal{P}(\Lambda(U))$ as $\mathcal{P}(U)\times \Lambda(T_{p\circ\Gamma(t)}\mathcal{W}/( T_{p\circ\Gamma(t)}\mathcal{W} )^\perp)$. Up to restrict the neighborhood $U$, the Maslov index of the path $\Gamma\vert_{[t-\epsilon,t+\epsilon]}$ with respect to $\cv$ corresponds to the Maslov index of $\Gamma\vert_{[t-\epsilon,t+\epsilon]}$, seen as a Lagrangian path in the symplectic vector space $T_{p\circ\Gamma(t)}\M$ thanks to the trivialization, with respect to $T_{p\circ\Gamma(t)}\cv$. Similarly, the Maslov index of the path $\mathcal{P}(\Gamma\vert_{[t-\epsilon,t+\epsilon]})$ with respect to $\mathcal{P}(\cv)$ is actually the Maslov index of the Lagrangian path $\mathcal{P}(\Gamma\vert_{[t-\epsilon,t+\epsilon]})$, seen in $T_{p\circ\Gamma(t)}\mathcal{W}/( T_{p\circ\Gamma(t)}\mathcal{W} )^\perp$ through the trivialization, with respect to $\mathcal{P}(T_{p\circ\Gamma(t)}\cv)$.
%
Applying then Lemma \ref{sym red for vec space}, we conclude.
\end{proof}}

\section{Maslov index along a Lagrangian submanifold that admits a generating function}\label{section gfqi}
Let $\cl\subset T^*M$ be a Lagrangian submanifold. The goal of this Section is to prove that every arc $\Gamma:[a,b]\to T\cl$ whose endpoints project on $T^*M$ on the so-called graph selector of $\cl$ has zero Maslov index.
\subsection{The relation  between the Maslov index and the  Morse index} 
Let us recall the definition of generating function for a Lagrangian submanifold $\cl$ of $T^*M$.
{ \begin{defi}
A $C^r$ function with $r\geq 2$ 
function $S:M\times \R^k\rightarrow \R$ {\sl generates} a Lagrangian submanifold $\cl$ of $T^*M$  if
\begin{itemize}
\item using the notation
$$\cC_S=\Big\{ (q, \xi)\in M\times \R^k :\  \frac{\partial S}{\partial \xi}(q, \xi)=0\Big\},$$
at every point of $\cC_S$, the map $\frac{\partial S}{\partial \xi}$ is a submersion; in this case, $\cC_S$ is a $d$-dimensional submanifold of $M\times \R^k$;
\item the map $j_S: \cC_S\hookrightarrow T^*M$ defined by $j_S(q, \xi)=\frac{\partial S}{\partial q}(q, \xi)$ is an embedding such that $j_S(\cC_S)=\cl$.
\end{itemize}
The {\sl generating function} $S$ is {\sl quadratic at infinity} (GFQI) if there exists a compact subset $K\subset M\times \R^k$ and a non-degenerate quadratic form $Q:\R^k\rightarrow \R$ such that 
$$\forall (q, \xi)\notin K, S(q, \xi)=Q(\xi).$$
{The generating function quadratic at infinity $S$ is of index $m$ if the non-degenrate quadratic form $Q$ has index $m$.}
\end{defi}

A result due to Sikorav \cite{Bru1991,Sik1987}, asserts that every H-isotopic\footnote{This means Hamiltonianly isotopic} to the zero section submanifold of $T^*M$ admits a GFQI.
}
 \begin{nota} If we denote as before the Liouville form on $T^*M$ by $\lambda$ and the Liouville form on $T^*(\R^k)$ by $\lambda_1$, the  product manifold $\cN=T^*M\times T^*(\R^k)$ is endowed with the symplectic form 
$\Omega=-p_1^*d\lambda-p_2^*d\lambda_1$ where $p_i$ is the projection on the i-th factor.

\end{nota}
\begin{theorem}\label{Tmain}
Let $\cl\subset T^*M$ be a Lagrangian submanifold that admits a  generating function $S(q, \xi):M\times \R^k\rightarrow \R$. Let $(q_i, \xi_i)\in M\times \R^k$, $i= 1, 2$ be such that
\begin{itemize}
\item $\frac{\partial S}{\partial \xi}(q_i, \xi_i)=0$, i.e. $(q_i,\xi_i)\in \cC_S$;
\item if we use the notation $p_i= \frac{\partial S}{\partial q}(q_i, \xi_i)$, the submanifold $\cl$ is transverse to the vertical fiber $T^*_qM$ at $p_i$ in $T^*M$.
\end{itemize}
Then, $\ker \frac{\partial^2 S}{\partial \xi^2}(q_i, \xi_i)=\{ 0\}$ and for every arc $\gamma_0$ joining  $\gamma_0(0)=p_1$ to  $\gamma_0 (1)=p_2$ in $\cl$, the Maslov index of $t\in [0, 1]\mapsto T_{\gamma_0 (t)}\cl$ with respect to the vertical is equal to the difference of the Morse indices $\text{index}\big(\frac{\partial^2 S}{\partial \xi^2}(q_2, \xi_2)\big)-\text{index}\big(\frac{\partial^2 S}{\partial \xi^2}(q_1, \xi_1)\big)$.

\end{theorem}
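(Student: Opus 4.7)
The plan is to lift $\cl$ to the graph of $dS$ in $T^*(M\times\R^k)$ and apply the Viterbo-type symplectic reduction from Section \ref{sym reduction}. First, I verify the non-degeneracy claim. The tangent map $dj_S$ at $(q_i,\xi_i)$ sends $(v,w)\in T_{(q_i,\xi_i)}\cC_S$ to $(v,\partial_q^2 S\cdot v + \partial_q\partial_\xi S\cdot w)\in T_{p_i}\cl$, while $T_{(q_i,\xi_i)}\cC_S$ is cut out by $\partial_\xi\partial_q S\cdot v + \partial_\xi^2 S\cdot w = 0$. Transversality of $\cl$ to the vertical $T^*_{q_i}M$ forces the projection $(v,w)\mapsto v$ from $T_{(q_i,\xi_i)}\cC_S$ onto $T_{q_i}M$ to be bijective. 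Its kernel being $\{0\}\oplus\ker\partial_\xi^2 S(q_i,\xi_i)$, we obtain $\ker\partial_\xi^2 S(q_i,\xi_i)=\{0\}$.

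Next I lift. Let $N := T^*(M\times\R^k)$ with coordinates $(q,\xi,p,\eta)$ and symplectic form $\Omega$. The graph $\tilde\cl := \{(q,\xi,\partial_q S(q,\xi),\partial_\xi S(q,\xi))\}$ is a Lagrangian submanifold of $N$. Let $\cw := \{\eta=0\}$; its characteristic foliation $\cw^\perp$ is generated by $\partial/\partial\xi$, and the reduction $\cR:\cw\to\cw/\cw^\perp$ identifies $\cw/\cw^\perp$ with $T^*M$. One checks that $\cR(\tilde\cl\cap\cw)=\cl$. The key choice is the reference Lagrangian foliation $\cv'$ on $N$ with tangent subspace $\{(0,w,a,0):w\in\R^k,\,a\in\R^d\}$ at every point: it contains $T\cw^\perp$, lies inside $T\cw$, and reduces via $\cR$ to the vertical foliation $\cv$ of $T^*M$. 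Lifting $\gamma_0$ to the arc $\tilde\gamma_0\subset\tilde\cl\cap\cw$ (well-defined since $\cR|_{\tilde\cl\cap\cw}$ is a diffeomorphism onto $\cl$), Lemma \ref{lemma almost Viterbo} gives
\[
\mathrm{MI}\bigl((T_{\gamma_0(t)}\cl)_{t\in[0,1]}\bigr) = \mathrm{MI}\bigl((T_{\tilde\gamma_0(t)}\tilde\cl)_{t\in[0,1]}\bigr),
\]
where the right-hand side is taken with respect to $\cv'$.

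To evaluate the upstairs index, I use that $T_{(q,\xi,dS)}\tilde\cl$ is the graph of $d^2 S(q,\xi)$, viewed as a symmetric operator on $\R^{d+k}$. A direct computation shows that the graph of a symmetric block matrix $\begin{pmatrix}A_{qq}&A_{q\xi}\\A_{\xi q}&A_{\xi\xi}\end{pmatrix}$ intersects $\{(0,w,a,0)\}$ in a subspace of dimension $\dim\ker A_{\xi\xi}$. After a generic transverse perturbation of $\tilde\gamma_0$, all crossings become simple and are governed by the block $\partial_\xi^2 S$. A normal-form argument (diagonalising $\partial_\xi^2 S$ along $\tilde\gamma_0$ so that one eigenvalue crosses zero at a time) reduces to the one-dimensional model, yielding
\[
\mathrm{MI}\bigl((T_{\tilde\gamma_0(t)}\tilde\cl)_{t\in[0,1]}\bigr) = \text{index}\bigl(\partial_\xi^2 S(q_2,\xi_2)\bigr) - \text{index}\bigl(\partial_\xi^2 S(q_1,\xi_1)\bigr).
\]

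The main technical obstacle will be the sign bookkeeping in this last step: verifying that under the paper's coorientation of $\Sigma_1$ (positive crossing = height form index drops by one, cf. Definition \ref{defi MI for arc}), a transverse crossing where one eigenvalue of $\partial_\xi^2 S(t)$ passes from negative to positive corresponds precisely to the correct signed contribution, so that the change of Morse index has the sign stated in the theorem. This is handled by an explicit Darboux-chart computation of the height quadratic form $\cQ_K(\cv',T\tilde\cl(t))$ at a crossing, where $\partial_\xi^2 S(t)$ is put in diagonal form, reducing everything to the one-dimensional case.
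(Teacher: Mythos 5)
Your proposal mirrors the paper's proof quite closely: same lift of $\cl$ to the graph of $dS$ inside the cotangent bundle of $M\times\R^k$, same choice of coisotropic $\{\eta=0\}$ and of the reference Lagrangian foliation (what the paper calls $\cF_{q,\chi}=T^*_qM\times\R^k\times\{\chi\}$), and the same descent via Lemma~\ref{lemma almost Viterbo}. The verification of $\ker\partial_\xi^2 S(q_i,\xi_i)=\{0\}$ is identical to the paper's Lemma~\ref{Lcaractvert}.

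There is, however, one genuine gap in the step ``After a generic transverse perturbation of $\tilde\gamma_0$, all crossings become simple and are governed by the block $\partial_\xi^2 S$.'' Since $\tilde\gamma_0$ is forced to live inside $\tilde\cl$, and the identification of crossings with degeneracies of $\partial_\xi^2 S$ depends on the tangent plane being exactly the graph of $d^2 S$, you may not simply perturb the abstract Lagrangian path $t\mapsto T_{\tilde\gamma_0(t)}\tilde\cl$: that perturbation would leave the Gauss image of $\tilde\cl$ and destroy the relation to $\partial_\xi^2 S$. On the other hand, perturbing the base curve $\tilde\gamma_0$ \emph{within} $\tilde\cl$ cannot in general achieve transversality either, because if the Gauss map of $\tilde\cl$ itself is tangent to the Maslov cycle, no curve inside $\tilde\cl$ will cross it in $D$-general position. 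The paper resolves this by perturbing the generating function $S$ in the $C^2$ topology (hence the submanifold $\tilde\cl$ itself) so that $D^2S$ along the curve lands in $D$-general position while the endpoint indices $\text{index}(\partial_\xi^2 S(q_i,\xi_i))$ are preserved; this is the first step of the paper's Lemma~\ref{Lindices} and should be incorporated into your argument. For the sign bookkeeping you flag, the paper replaces your proposed eigenvalue-by-eigenvalue normal form with a cleaner global congruence: after a linear symplectic change of variables adapted to the crossing, the matrix $M(t)$ of the height form satisfies $\P(t)^T M(t)\P(t)=\mathrm{diag}(-\mathbf{1}_n,\ \partial_\xi^2 S(q(t),\xi(t)))$, so the signature jump of the height form at a crossing is exactly the signature jump of $\partial_\xi^2 S$, with the correct sign, in one stroke.
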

 
\begin{proof}[Proof of Theorem \ref{Tmain}] 
\begin{lemma}\label{Lcaractvert}
Let $p=\frac{\partial S}{\partial q}(q, \xi)\in \cl$. Then $\cl$ is transverse to $T^*_qM$ at $p$ if and only if  $ker \big( \frac{\partial^2 S}{\partial \xi^2}(q, \xi)\Big)=\{ 0\}$.\end{lemma}
\begin{proof}[Proof of Lemma \ref{Lcaractvert}]
Let us fix $p\in \cl$ and let $\delta p\in T_p(T^*M)$. We use the notation $q=\pi(p)\in M$ and $\delta q= d\pi(p)\delta p\in T_qM$.\\
Then  $\delta p$ belongs to $T_{p}\cl$ if and only if there exists $\delta\xi\in \R^k$ such that 
\begin{itemize}
\item ${D\big(\frac{\partial S}{\partial \xi}\big)(\delta q, \delta \xi)=}\frac{\partial^2 S}{\partial q\partial \xi}(q, \xi)\delta q+\frac{\partial^2 S}{\partial \xi^2}(q, \xi)\delta \xi=0$;
\item ${ Dj_S(\delta q, \delta \xi)=}\delta p=\frac{\partial^2 S}{\partial q^2}(q, \xi)\delta q+\frac{\partial^2 S}{\partial \xi\partial q}(q, \xi)\delta \xi$.
\end{itemize}
{Observe that $\pi\Big(\frac{\partial S}{\partial q}(q, \xi)\Big)=q$ and then
\begin{equation}\label{Egenevert} D\pi\Big(\frac{\partial^2S}{\partial q^2}(q, \xi)\delta q\Big)=\delta q\quad{\rm and}\quad D\pi\Big(\frac{\partial^2S}{\partial \xi\partial q}(q, \xi)\delta \xi\Big)=0.\end{equation}
We deduce 
 $$\delta\xi\in\ker \Big(\frac{\partial^2S}{\partial\xi^2}\Big)\backslash\{ 0\}\Longleftrightarrow(0, \delta \xi)\in \ker\Big(D\big(\frac{\partial S}{\partial \xi}\big)\Big)\backslash\{ 0\}$$
$$\Longleftrightarrow Dj_S(0, \delta \xi)\in T\cl\backslash\{ 0\}\Longleftrightarrow\frac{\partial^2S}{\partial \xi\partial q}\delta\xi\in T\cl\backslash\{0\}.$$Using \eqref{Egenevert}, we conclude that $\ker \Big( \frac{\partial^2 S}{\partial \xi^2}(q, \xi)\Big)\neq\{ 0\}$ if and only if $\cl$ is not transverse to $T^*_qM$ at $\frac{\partial S}{\partial q}(q, \xi)$.}
\end{proof}

In ${\cal N}=T^*M\times T^*(\R^k)$,  endowed with the symplectic  form $\Omega${$=-p_1^*d\lambda-p_2^*d\lambda_1$}, we consider the coisotropic foliation into submanifolds $$\cw_\chi=T^*M\times \R^k\times \{\chi\}$$ 
for $\chi\in \R^k$.
The characteristic leaves of $\cw_\chi$ are the submanifolds   $\cw^\bot_{( p,\chi)}=\{ p\}\times \R^k\times \{\chi\}$ with $ p\in T^*M$.

We will use  also the  Lagrangian foliation $\cF$  of   $\cN$ with leaves 
$\cF_{q,\chi}=T^*_qM\times \R^k\times \{ \chi\}$. Then we have  $\cw^\bot_{( p,\chi)} \subset \cF_{(\pi(p),\chi )}\subset \cw_\chi$. We denote by $F_{(p,\xi, \chi)}$ the tangent space to the leaf $\cF_{(\pi(p),\chi)}$ at the point  $(p,\xi, \chi)$.\\
The graph $ \cg= \text{graph}(dS)\subset \cN$ of $dS$ is  a Lagrangian submanifold of $\cN$  that is transverse to $\cw_0$ and such that $\cg\cap \cw_0$ is diffeomorphic to $\cl$ by the map $$\cR:(p, \xi, 0)\in \cw_0\mapsto p.$$ Observe that $\cR$ is the symplectic reduction of $\cw_0$. We denote by $R$ the restriction of $\cR$ to $\cg\cap \cw_0$.

We use for $\gamma_0$, $q_i, p_i, \xi_i$ the same notations as in Theorem \ref{Tmain}. Then $\Gamma_0=R^{-1}\circ \gamma_0$ is an arc on $\cg\cap \cw_0$ such that $\Gamma_0(0)=(p_1, \xi_1, 0)$ and $\Gamma_0 (1)=(p_2, \xi_2, 0)$. We have
\begin{lemma}\label{Lindices}
Let   $\Gamma(t)=( p(t), \xi(t), \chi(t))\in \cg$ be an arc in $\cg$ such that at  $\Gamma(0)$ and $\Gamma(1)$, the quadratic form $\frac{\partial^2 S}{\partial \xi^2}$ is non-degenerate. The Maslov index of the arc of Lagrangian subspaces $t\in[0, 1]\mapsto T_{\Gamma(t)}\cg$ with respect to the fibered singular cycle associated to $\cF$ 
is $$\text{index}\Big(\frac{\partial^2 S}{\partial \xi^2}(q(1), \xi(1))\Big)-\text{index}\Big(\frac{\partial^2 S}{\partial \xi^2}(q(0), \xi(0))\Big).$$
\end{lemma}
\begin{proof}[Proof of Lemma \ref{Lindices}] 
{\color{black}Up to a small perturbation, there is no loss of generality in assuming that $S$ is smooth. 
The proof is divided into two steps. {   First of all, we will perturb the Lagrangian submanifold $\cl$ (i.e., its generating function) and $\Gamma$ on it in such a way   that $\Gamma$ is in $D$-general position 
with respect to the fibered singular cycle associated to $\cF$. Then we will prove the lemma. }\\}

\noindent\textit{First step.}  {\color{black} As $T_{\Gamma(0)}\cg$ and $T_{\Gamma(1)}\cg$ are transverse to $F_{\Gamma(0)},F_{\Gamma(1)}$ respectively, there exists $\varepsilon>0$ such that  for all $ t\in [0, \varepsilon]\cup[1-\varepsilon, 1],  T_{\Gamma(t)}\cg$ is transverse to $F_{\Gamma(t)}$. We use the notation   $t\mapsto \zeta(t)$ $ := (\pi\circ p(t), \xi(t))\in M\times \R^k$. We now choose a neighbourhood $\cal U$ of   {$\zeta([\varepsilon, 1-\varepsilon])$} in $M\times \R^k$ and a diffeomorphism $\psi: \cal U\rightarrow \R^d\times \R^k$ such that 
$$\forall t\in [\epsilon,1-\epsilon], 
{\psi(\zeta (t))}=(t, 0, \dots, 0).$$ 
Let $s:\psi({\cal U})\rightarrow \R$ defined by $s(y)=S\circ \psi^{-1}(y )$. Then in the neighborhood of $(\varepsilon, 0\dots, 0)$ and $(1-\varepsilon, 0, \dots, 0)$, we  know that ${\rm graph}(D^2s)$ and $D\psi(F)$ are transverse.
We can slightly perturb the path of matrices $t\in [\varepsilon, 1-\varepsilon]\mapsto D^2s(t, 0, \dots, 0)$ in a path $t\mapsto A(t)$ of symmetric matrices such that \begin{itemize}
\item $A(t)=D^2s(t, 0, \dots, 0)$ in a neighbourhood of $\varepsilon$ and $1-\varepsilon$;
\item  the path $t\mapsto {\rm graph}(A(t))$ is in $D$-general position
\end{itemize}
We now define for $x_1$ in a neighborhood of $[\varepsilon, 1-\varepsilon]$
\begin{itemize}
\item $\delta(x_1)=\int_\varepsilon^{x_1}(A(\sigma)-D^2s(\sigma, 0, \dots, 0))(1, 0, \dots, 0)d\sigma$;
\item $v(x_1)=\int_\varepsilon ^{x_1}\delta(\sigma)(1,0,\dots,0)d\sigma$;
\item in a neighbourhood of $[\varepsilon, 1-\varepsilon]\times \{ 0_{\R^{d-1}\times \R^k}\}$, 
$$u(x_1, \dots, x_{d+k})=s(x_1, \dots, x_{d+k})+v(x_1)+\delta(x_1)(0,x_2, \dots, x_{d+k})$$
$$+\frac{1}{2}(A(x_1)-D^2s(x_1,0,\dots,0))((0, x_2, \dots, x_{d+k}), (0, x_2, \dots, x_{d+k})).$$
\end{itemize}
Then $u$ is $C^2$ close to $s$ and we have 
$$\forall x_1\in [\varepsilon, 1-\varepsilon], D^2u(x_1, 0,\dots, 0)=A(x_1).$$
We then use a bump bunction $\eta$ with support in a neighbourhood of $[\varepsilon, 1-\varepsilon]\times \{ 0_{\R^{d-1}\times \R^k}\}$ and that is equal to 1 in a smaller neighbourhood of $[\varepsilon, 1-\varepsilon]\times \{ 0_{\R^{d-1}\times \R^k}\}$. We define
$$\tilde s(x_1, \dots, x_{d+k})=$$
$$(1-\eta(x_1, \dots, x_{d+k}))s(x_1, \dots, x_{d+k})+\eta(x_1, \dots, x_{d+k}) u(x_1, \dots, x_{d+k}).$$
As $\tilde s$ is equal to $u$ in $[\varepsilon, 1-\varepsilon]\times \{ 0_{\R^{d-1}\times \R^k}\}$, $D^2\tilde s$ is in {$D$-general position} with respect to $D\psi(F)$ along the lift of  this arc in ${\rm graph }Du$. In addition, as $\tilde s$ is $C^2$-close to $s$, $D^2\tilde s$ is transverse to $D\psi(F)$ along the lift of  $\big( [0, \varepsilon]\cup[ 1-\varepsilon, 1]\big)\times \{ 0_{\R^{d-1}\times \R^k}\}$ in ${\rm graph}D\tilde s$.

 Finally, define the function $\tilde S$ to be equal to $S$ outside $\psi^{-1}({\cal U})$ and to $\tilde s\circ \psi$ in $\psi^{-1}({\cal U})$. Thus, $\tilde S$ is $C^2$ close to $S$,
 {$D\tilde S\circ \zeta$ is $C^1$  close to $DS\circ\zeta=\Gamma$ and $t\mapsto D\tilde S\circ\zeta(t)$ is in {$D$-general position} with respect to the fibered singular cycle associated to $\cF$}. As the new generating function 
 $\tilde S$ is $C^2$  close to $S$,  the number   $\text{index}\big(\frac{\partial^2 S}{\partial \xi^2}(q(1), \xi(1))\big)-\text{index}\big(\frac{\partial^2 S}{\partial \xi^2}(q(0), \xi(0))\big)$ does not change. }

This will allow us to assume that 
 the path $t\mapsto T_{\Gamma(t)}\cg$ is in {$D$-general position} with respect to  {the fibered singular cycle associated to $\cF$}.\\
 
 \noindent \textit{Second step.} We then look at what happens at a crossing $\Gamma(\bar t)$. We choose a chart close to $\frac{\partial S}{\partial q}(q(\bar t), \xi(\bar t))$ and  we assume that we work in coordinates~: $q\in U\subset \R^n$ and $(p_1, \dots, p_n)$ are the dual coordinates defined by $(q, \sum p_idq_i)\in T^*U$.\\
In these coordinates for $(q, p)=(q, \frac{\partial S}{\partial q}(q, \xi))\in U\times \R^n$, we use the linear and symplectic change of coordinates
$$(\delta q, \delta p)\mapsto \Big(\delta Q=\delta p+\big({\bf 1}_n-\frac{\partial^2 S}{\partial q^2}(q, \xi)\big)\delta q, \delta P=-\delta q\Big).$$
In the extended space $T\cN$ with linear  coordinates $(\delta Q, \delta P, \delta \xi, \delta \chi)$, the equation of $F_{\Gamma(t)}$ is $(\delta P, \delta\chi)=(0, 0)$, i.e. $F_{\Gamma(t)}$ is the graph the zero function. The equation of $T\cg$ is 
\begin{equation}\label{ETG}
\begin{cases}\delta P=-\delta Q+\frac{\partial^2S}{\partial \xi\partial q}\delta \xi\\
 \delta \chi=\frac{\partial^2 S}{\partial q\partial \xi} \delta Q+\Big(\frac{\partial ^2S}{\partial \xi^2} -\frac{\partial^2 S}{\partial q\partial \xi} \frac{\partial^2S}{\partial\xi\partial q} \Big)\delta \xi\end{cases}\end{equation} 
 and this is also a graph. 
 We compute then  the change of Maslov index with respect to $F_{\Gamma(t)}$
 with the help of the height  of $T\cg$ above the vertical $L$ with respect to $F$, {i.e. $\cal Q_F(L,T\cal G)$,} where $L$ has   equation  $(\delta Q, \delta\xi)=(0, 0)$. {Observe that, for $t$ close to $\bar t$}, $F$ and $L$ are transverse and the projection $p_F:T\cN \rightarrow  T\cN/F$ restricted to $L$  is an isomorphism. So we can take $(\delta P, \delta \chi)$ as   coordinates in $T\cN/F$.  Also, for $t\not=\bar t$ close to $\bar t$, $F$ and $T\cg$ are transverse, {{because crossings of a path in general position are isolated}}. {Moreover, note that, for $t$ close to $\bar t$, $L$ and $T\cg$ are transverse}. If we introduce the matrix 
 
 \begin{equation}M(t)=\begin{pmatrix} -{\bf 1}_n & \frac{\partial^2 S}{\partial\xi\partial q}\\

\frac{\partial^2 S}{\partial q\partial \xi} & \left(\frac{\partial ^2S}{\partial \xi^2}-\frac{\partial^2 S}{\partial q\partial \xi}\frac{\partial^2S}{\partial\xi\partial q}\right)
\end{pmatrix}(q(t), \xi(t)),\end{equation} 
as the equation of $T\cg$ is (we write in coordinates)
$$\begin{pmatrix}
\delta P \\ \delta \chi
\end{pmatrix} = M(t)\, \begin{pmatrix}
\delta Q \\ \delta \xi
\end{pmatrix}\, ,$$
 the matrix $M(t)$ is invertible for $t\not=\bar t$ and
we have 
 
$$\cq_F(L, T\cg)(\delta P, \delta\chi)=\Omega\Big((0,0,\delta P,  \delta \chi), ((\delta P,\delta\chi) .(M(t)^{-1})^T, \delta P, \delta\chi)\Big).$$
\color{black}
Hence the matrix of $\cq_F(L, T\cg)(\delta P, \delta\chi)$  in coordinates $(\delta P, \delta\chi)$ is $M(t)^{-1}$. The change of signature of $M(t)^{-1}$ at $\bar t$ is exactly the same as the change of signature of $M(t)$.

Let us introduce the matrix
$$\P(t)=\begin{pmatrix} {\bf 1}_n& \frac{\partial^2 S}{\partial\xi\partial q}(q(t), \xi(t))\\  {\bf 0}& {\bf 1}_k\end{pmatrix}.$$
Then we have 
$$\begin{matrix}\P(t)^T M(t)\P(t)&=\begin{pmatrix} {\bf 1}_n& {\bf 0}\\ \frac{\partial^2 S}{\partial q\partial \xi}(q(t), \xi(t))& {\bf 1}_k\end{pmatrix}M(t)\begin{pmatrix} {\bf 1}_n& \frac{\partial^2 S}{\partial\xi\partial q}(q(t), \xi(t))\\  {\bf 0}& {\bf 1}_k\end{pmatrix}\\
&=\begin{pmatrix} -{\bf 1}_n & 0\\ 0&\frac{\partial ^2S}{\partial \xi^2}(q(t), \xi(t)) \end{pmatrix}\, .\hfill\ \end{matrix}$$
Hence the change of signature of $M(t)$ at $t=\bar t$ along the path $\Gamma$ is equal to the change of signature of $\frac{\partial^2 S}{\partial\xi^2}$. This is exactly the Maslov index of the arc of Lagrangian subspaces {\color{black} $t\in[\bar t-\varepsilon, \bar t+\varepsilon]\mapsto T_{\Gamma(t)}\cg$} with respect to $F_{\Gamma(t)}$.
 \end{proof}
We deduce that the Maslov index of $T\cg$ along the arc $\Gamma_0$ with respect to $F$ 
is $\text{index}\big(\frac{\partial^2 S}{\partial \xi^2}(q_2, \xi_2)\big)-\text{index}\big(\frac{\partial^2 S}{\partial \xi^2}(q_1, \xi_1)\big)$.\\
We have noticed that $\cw^\bot_{(p,\chi)} \subset \cF_{(\pi(p),\chi )}\subset \cw_\chi$. Also, because $\cg$ is Lagrangian and transverse to $\cw_0$, at every point of intersection, the intersection of the tangent subspaces to $\cg$ and $\cw^\bot_{( p,0)}$ is $\{ 0\}$. {The path $t\mapsto \Gamma(t)$ can be put in $D$-general position with respect to $\cF$, as done in the first step of the proof of Lemma \ref{Lindices}. Thus, we  can apply the results concerning the Maslov index that are given in section 2 of \cite{Vit1987}, see here Lemma \ref{lemma almost Viterbo} and Subsection \ref{sym reduction}}. \\
As the curve $\Gamma_0$ is contained in $\cg\cap \cw_0$, the Maslov index of $T\cg$  along $\Gamma_0$ with respect $F$ is equal to the Maslov index of $(T(\cg\cap \cw_0))/T\cw^\bot_0$ with respect to $F/T\cw^\bot_0$. We have $(T(\cg\cap \cw_0))/T\cw^\bot_0={ T}R(\cg)={ T}\cl$ and $F/T\cw^\bot_0(\Gamma(t))=T_{\gamma_0(t))}(T^*_{\pi\circ \gamma_0(t)}M)$ is the vertical {$V_{\gamma_0(t)}$}. This proves the theorem.

\end{proof}
 \subsection{Maslov index along graph selectors}\label{SMaslocalongselec}

Let us assume that the Lagrangian submanifold $\cl$ of $T^*M$ admits a 
generating function {quadratic at infinity}. We recall the construction of a    {\sl graph selector} 
$u:M\rightarrow \R$. Such a graph selector was introduced by M.~Chaperon in \cite{Chap1991} (see \cite{PPS2003} and \cite{Sib2004} too) by using the homology. Here we will use  the cohomological approach (see e.g. \cite{ArnaVent2017}). We now explain this.  
\begin{notas}
Let $S: M\times \R^k\rightarrow \R$ be a  function that generates a Lagrangian submanifold, $q\in M$ and $a\in\R$ is a real number, we denote the sublevel with height $a$ at $q$ by $$S^a_q=\{\xi\in\R^k;  \quad S(q, \xi)\leq a\}$$ and we use the notation $S_q=S(q, .)$.
\end{notas}
When $S$ is quadratic at infinity  with index $m$, there exists $N\geq 0$ such that all the critical values of $S$ are in $(-N, N)$. 
Observe that, since $S$ is a  GFQI, $S_q^{-N}$ is the sublevel of a non-degenerate quadratic form of index $m$. Thus {(see for example \cite{Milnor})}, the De Rham relative cohomology space 
	$H^*(\R^k, S_q^{-N})$ is isomorphic to \[
	H^*(\R^m)=\begin{cases}
	\R\quad\text{if }*=m,\\
	0\quad\text{if }*\neq m.
	\end{cases}
	\]
 We denote by $\alpha_q$ a closed $m$-form  of  $\R^k$  such that $\alpha_{q|S_q^{-N}}=0$ and $0\not= [\alpha_q] \in H^m({  \R^k}, S_q^{-N})$.

If $a\in (-N, N)$, we use the notation $i_a: (S_q^a, S_q^{-N})\hookrightarrow (  \R^k, S_q^{-N})$ for the inclusion and then $i_a^*:H^m(  \R^k, S_q^{-N})\rightarrow H^m(S_q^a, S_q^{-N})$. The {\sl graph selector} $u:M\rightarrow \R$ is then defined by:
$$u(q)=\sup\{ a\in \R; [i_a^*\alpha_q]=0\}=\inf\{ a\in \R; [i_a^*\alpha_q]\not=0\}.$$
The following result is classical (see   \cite{ArnaVent2017} for a proof in our setting).

\begin{proposition}\label{geneselec}
Let $\cl\subset T^*M$ be a Lagrangian submanifold admitting a GFQI $S:M\times \R^k\rightarrow \R$ {of regularity $C^r$ with $r\geq 2$} and let $u:M\rightarrow \R$ be the graph selector for $S$ which is a Lipschitz function. Then $u$ is $C^r$ on the open set
\begin{center}
$U := \{ q \in M,\; \xi \mapsto S(q,\xi)$ is Morse excellent\footnote{An \textit{excellent} function is by definition a function whose   every critical value is attained at at exactly one critical point.}$\}$
\end{center}
which has full measure,  and for all $q$ in $U$, the following hold:
\begin{itemize}
\item[$\bullet$] $du(q)\in \cl$;
\item[$\bullet$] $u(q)=S\circ j_S^{-1}(du(q))$.
\end{itemize}


\end{proposition}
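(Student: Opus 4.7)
The plan is to identify $u(q)$ as a critical value of $S_q$ singled out by cohomology, and on the Morse-excellent set $U$ to propagate the associated critical point smoothly via the implicit function theorem, concluding by the envelope formula.

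First I would establish that $u(q)$ is always a critical value of $S_q$. If $[a,b]\subset(-N,N)$ contained no critical value of $S_q$, then the sublevel sets $S_q^a$ and $S_q^b$ would be deformation equivalent through the negative gradient flow of $S_q$ (whose long-time behavior is controlled at infinity by the non-degenerate quadratic form), so $i_a^*\alpha_q$ and $i_b^*\alpha_q$ vanish or not simultaneously. Hence the jump defining $u(q)$ must occur at a critical value of $S_q$.

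On $U$, Morse excellence designates a \emph{unique} critical point $\xi_0$ of $S_{q_0}$ at height $u(q_0)$. Nondegeneracy of $\partial_\xi^2 S(q_0,\xi_0)$ and the implicit function theorem provide a $C^{r-1}$ map $q\mapsto\xi(q)$ solving $\partial_\xi S(q,\xi(q))=0$ near $q_0$, with $\xi(q_0)=\xi_0$. Openness of $U$ follows because Morse nondegeneracy is stable under perturbation and the (finitely many) critical values of $S_q$ depend continuously on $q$, so their distinctness persists locally. By continuity of $u$ and of these critical values, for $q$ sufficiently close to $q_0$ the value $u(q)$ must equal $S(q,\xi(q))$. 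Differentiating this identity and using $\partial_\xi S(q,\xi(q))=0$, the envelope formula yields
\[
du(q)=\partial_q S(q,\xi(q))=j_S(q,\xi(q))\in\cl,
\]
which is $C^{r-1}$ in $q$, so $u$ is $C^r$ on $U$. Since $j_S$ is an embedding, $j_S^{-1}(du(q))=(q,\xi(q))$, hence $u(q)=S(q,\xi(q))=S\circ j_S^{-1}(du(q))$, which is the second bullet.

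The main obstacle is the full-measure claim for $U$. I would combine the classical fact that the parameters $q$ for which $S_q$ fails to be Morse form a null set with a Sard-type argument applied to the smooth map
\[
\Phi(q,\xi_1,\xi_2):=\bigl(\partial_\xi S(q,\xi_1),\,\partial_\xi S(q,\xi_2),\,S(q,\xi_1)-S(q,\xi_2)\bigr)
\]
defined on $\{\xi_1\neq\xi_2\}$: a dimension count shows that $\Phi^{-1}(0)$ has, after removing the diagonal, dimension at most $d-1$, so its projection onto $M$ has Lebesgue measure zero, covering those $q$ at which two distinct critical values of $S_q$ coincide. The complement of $U$ in $M$ being contained in the union of these null sets, $U$ is a full-measure open subset of $M$.
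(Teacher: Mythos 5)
Your proof is essentially correct and follows the classical line of argument; the paper itself does not supply its own proof but simply cites this result as classical, referring to \cite{ArnaVent2017}. The Morse-theoretic identification of $u(q)$ as a critical value, the implicit-function-theorem propagation of the selected critical point on the Morse-excellent set, the continuous-selection argument among the finitely many distinct critical values, and the envelope formula $du(q)=\partial_q S(q,\xi(q))=j_S(q,\xi(q))$ are all sound, and indeed mirror the implicit-function-theorem computation the paper carries out later in the proof of Proposition \ref{tUouvert}.

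The one spot that needs more than a hand wave is the ``dimension count'' for the full-measure claim. The level set $\Phi^{-1}(0)$ has dimension $d-1$ only if $0$ is a regular value of $\Phi$, which is not automatic and must be checked. After discarding the null set of $q$ where $S_q$ fails to be Morse (your first step), one has, at any point of $\Phi^{-1}(0)$, two non-degenerate critical points $\xi_1\neq\xi_2$; the Hessian blocks $\partial^2_\xi S(q,\xi_1)$ and $\partial^2_\xi S(q,\xi_2)$ then account for rank $2k$ of $D\Phi$ in the $\xi_1$- and $\xi_2$-directions. For surjectivity onto $\R^{2k+1}$ one additionally needs the $q$-derivative of the third component, $\partial_q S(q,\xi_1)-\partial_q S(q,\xi_2)=j_S(q,\xi_1)-j_S(q,\xi_2)$, to be nonzero --- and this holds precisely because $j_S$ is an embedding (part of the definition of a generating function) and $\xi_1\neq\xi_2$. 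Without explicitly invoking this injectivity, the transversality, and hence the codimension count and the Sard conclusion, is unjustified; making the observation explicit would close the argument.
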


\begin{remk}
    Let $\cl$ be a Lagrangian submanifold admitting a generating function $S : M \times \R^k \to \R$. Then for all $C^1$ path $\gamma : [0,1] \to \cl$,
 \[ S\left(j_S^{-1}(\gamma(1))\right)- S\left(j_S^{-1}(\gamma(0))\right) = \int_\gamma \lambda.\]
 As a consequence we may describe the open set $U$ without mentioning the generating family:
 \[\begin{split}
U & =\big\{  q \in M, \, T_q^\star M \pitchfork {\cl}  \text{ and for all path }\gamma:[0,1] \to \cl\\ &
\text{ with distinct endpoints in }T_q^\star M \pitchfork \cl, \quad \int_\gamma \lambda \neq 0.\big\}  \end{split}\]
Indeed, the transversality condition is equivalent to the fact that $(q,\xi) \to S(q,\xi)$ is Morse, and the condition on the path gives that the values of $S$ above two different critical points of the generating family are necessarily distinct.
\end{remk}

From Theorem \ref{Tmain} and the latter proposition, we deduce
\begin{proposition}\label{Cselecindconst}
We use the same notations as in the previous proposition. Then if $q_1, q_2\in U$ and if $\gamma:[0, 1]\rightarrow \cl$ is a continuous arc joining $du(q_1)$ to $du(q_2)$, the Maslov index of the arc of Lagrangian subspaces $t\mapsto T_{\gamma(t)}\cl$ with respect to the vertical is zero.
\end{proposition}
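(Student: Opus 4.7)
\medskip\noindent\textbf{Proof proposal.} My plan is to reduce the statement directly to Theorem~\ref{Tmain} and then use the minimax characterization of the graph selector to show that the Morse indices of $S_{q_1}$ and $S_{q_2}$ at the relevant critical points coincide, so their difference vanishes.

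First, by Proposition~\ref{geneselec}, each $q_i\in U$ determines a unique critical point $\xi_i$ of $S_{q_i}$ with $du(q_i)=\frac{\partial S}{\partial q}(q_i,\xi_i)$ and $u(q_i)=S(q_i,\xi_i)$. Since $S_{q_i}$ is Morse, Lemma~\ref{Lcaractvert} yields that $\cl$ is transverse to $T^*_{q_i}M$ at $du(q_i)$. The hypotheses of Theorem~\ref{Tmain} are therefore satisfied for the arc $\gamma$ joining $du(q_1)$ to $du(q_2)$ in $\cl$, and we obtain
\[
\mathrm{MI}\bigl((T_{\gamma(t)}\cl)_{t\in[0,1]}\bigr) \;=\; \mathrm{index}\Bigl(\tfrac{\partial^2 S}{\partial\xi^2}(q_2,\xi_2)\Bigr)-\mathrm{index}\Bigl(\tfrac{\partial^2 S}{\partial\xi^2}(q_1,\xi_1)\Bigr).
\]
It thus suffices to prove that for every $q\in U$ the Morse index of $S_q$ at its unique relevant critical point $\xi(q)$ equals the index $m$ of the quadratic form at infinity.

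This is the step where the cohomological definition of $u$ is crucial. Since $S_q$ is Morse excellent, the value $c:=u(q)$ is attained at a single critical point $\xi(q)$, and by definition of $u$ we have $i_a^*\alpha_q=0$ in $H^m(S_q^a,S_q^{-N})$ for $a<c$ while $i_a^*\alpha_q\neq 0$ for $a>c$. Taking $\epsilon>0$ small enough that $c$ is the only critical value in $[c-\epsilon,c+\epsilon]$ and considering the portion of the long exact sequence of the triple $(S_q^{c+\epsilon},S_q^{c-\epsilon},S_q^{-N})$,
\[
H^m(S_q^{c+\epsilon},S_q^{c-\epsilon}) \xrightarrow{j^*} H^m(S_q^{c+\epsilon},S_q^{-N}) \xrightarrow{r} H^m(S_q^{c-\epsilon},S_q^{-N}),
\]
the class $i_{c+\epsilon}^*\alpha_q$ maps to $i_{c-\epsilon}^*\alpha_q=0$ under $r$, so it lies in the image of $j^*$, which is therefore nonzero. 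This forces $H^m(S_q^{c+\epsilon},S_q^{c-\epsilon})\neq 0$, and by the standard Morse lemma this group is nonzero exactly when $\xi(q)$ has Morse index $m$. Hence $\mathrm{index}\bigl(\frac{\partial^2 S}{\partial\xi^2}(q,\xi(q))\bigr)=m$ for every $q\in U$.

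Combining the two steps, both Morse indices in the formula above equal $m$, so their difference is $0$ and the Maslov index of $t\mapsto T_{\gamma(t)}\cl$ with respect to the vertical vanishes. The only delicate point in executing this plan is the Morse-theoretic identification of the index $m$ of the critical point selected by the minimax, which requires the exact sequence argument sketched above together with the fact that $S_q$ is excellent, so that cells of different indices are never attached simultaneously.
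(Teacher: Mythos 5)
Your proposal is correct and follows essentially the same route as the paper: apply Theorem~\ref{Tmain} to reduce to comparing Morse indices, then use the cohomological definition of the graph selector together with the exact sequence of the triple $(S_q^{u(q)+\varepsilon},S_q^{u(q)-\varepsilon},S_q^{-N})$ to identify the Morse index of the selected critical point with $m$. The only cosmetic addition is that you explicitly invoke Lemma~\ref{Lcaractvert} to check the transversality hypothesis of Theorem~\ref{Tmain}, which the paper leaves implicit.
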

\begin{proof}[Proof of Proposition \ref{Cselecindconst}]
We recall some well-known facts about Morse functions. Let  $f:\R^k\rightarrow \R$ be a Morse function quadratic at infinity 
such that its critical points have different critical value. We use the notation $f^a=\{ x\in\R^k, f(x)\leq a\}$ for the sublevels of $f$.  Then,  for every critical point $c$ such that $D^2f(c)$ has index $p$, for $\varepsilon>0$ small enough, the De Rham relative cohomology space 
$H^*(f^{f(c)+\varepsilon},  f^{f(c)-\varepsilon})$ is isomorphic to $\R$ for $*=p$ and trivial if $*\not=p$. 

Let now consider $q\in U$.  As $S(q, .)$ is Morse such that different critical points correspond to different critical values, there is only one $\xi_q\in \R^k$ that is a critical point of $S(q, .)$ such that $S(q, \xi_q)=u(q)$. By definition of $u$, we have 
\begin{itemize}
\item for every $\varepsilon>0$, $0\not=[i_{u(q)+\varepsilon}^*\alpha_q]\in H^m(S_q^{u(q)+\varepsilon}, S_q^{-N})$;
\item for every $\varepsilon>0$, $0=[i_{u(q)-\varepsilon}^*\alpha_q]\in   H^m(S_q^{u(q)-\varepsilon}, S_q^{-N})$.
\end{itemize}
{We recall the notation for maps of pairs in relative cohomology. The notation $f:(M, N)\to (V, W)$ means that $f:M\to V$ with $f(N)\subset W$.\\
}
We introduce the maps associated to the inclusion $S_q^{-N}\subset S_q^{u(q)-\varepsilon}\subset S_q^{u(q)+\varepsilon}$.  More precisely, we denote by $j_1:(S_q^{u(q)+\varepsilon},S_q^{-N})\hookrightarrow (S_q^{u(q)+\varepsilon},S_q^{u(q)-\varepsilon})$ and $j_2:(S_q^{u(q)-\varepsilon},S_q^{-N})\hookrightarrow (S_q^{u(q)+\varepsilon},S_q^{-N})
$ the two inclusion maps. We now use the exact cohomology sequence that is induced by these maps, see \cite{God1971}, that is
$$H^m (S_q^{u(q)+\varepsilon},S_q^{u(q)-\varepsilon})\stackrel{j_1^*}{\longrightarrow}  H^m (S_q^{u(q)+\varepsilon},S_q^{-N})\stackrel{j_2^*}{\longrightarrow}  H^m (S_q^{u(q)-\varepsilon},S_q^{-N}).
$$
Then $[i_{u(q)+\varepsilon}^*\alpha_q]$ is a non-zero element of $ H^m(S_q^{u(q)+\varepsilon}, S_q^{-N})$ and its image by $j_2^*$ is $0$. Because the sequence is exact, $[i_{u(q)+\varepsilon}^*\alpha_q]$ is a non zero element of the image of $j_1^*$. This implies that $H^m (S_q^{u(q)+\varepsilon},S_q^{u(q)-\varepsilon})\not=\{ 0\}$, and then that  the index of the critical point $\xi_q$ of $S(q, .)$ is $m$.

Hence we have proved that for every $q\in U$, if $du(q)=\frac{\partial S}{\partial q}(q, \xi_q)$ where $\frac{\partial S}{\partial \xi}(q, \xi_q)=0$, the index of $\frac{\partial^2 S}{\partial \xi^2}(q, \xi_q)$ is $m$. We deduce from Theorem \ref{Tmain}  the wanted result.

\end{proof}

\section{Dynamical Maslov index, graph selectors and proof of Theorem~\ref{Tppal}}\label{section heart}

\subsection{Graph selector techniques adapted to conformal symplectic isotopies of the zero section}
\label{exactification}

Let $(\phi_t)$ be a  {$C^{r+1}$}   isotopy  of  conformally symplectic diffeomorphisms  of $T^*M$ { with $r\geq 2$ } such that $\phi_0={\rm Id}_{T^*M}$.  We want to apply the results of section \ref{SMaslocalongselec} to the images $\phi_t(\cl_0)$ of the zero-section and obtain results for the dynamical Maslov index $$\text{DMI}(T_x\cl,(\phi_s)_{s\in[0,t]}).$$

As every $\phi_t$ is conformally symplectic, there exists $a(t)\in]0, +\infty[$ such that $\phi_t^*\omega=a(t)\omega$. Then the form  $\beta_t=\phi_t^*\lambda-a(t)\lambda$ is closed. The projection $\pi: T^*M={\cal M}\rightarrow M$ inducing an isomorphism in cohomology, we can choose {in a $C^{r}$ way }    a closed 1-form $\eta_t$ on $M$ such that $\pi^*\eta_t-\beta_t$ is exact {\color{black} and $\eta_0=0$}. If the symplectic diffeomorphism $f_t:{\cal M}\rightarrow {\cal M}$ is defined by $f_t(p)=p-\eta_t$,   we have 
$$f_t^*\lambda=\lambda-\pi^*\eta_t.$$
If $(\psi_t)$ is the isotopy of conformally symplectic diffeomorphisms defined by $\psi_t=f_t\circ\phi_t $, then we have
$$\psi_t^*\lambda=\phi_t^*(\lambda-\pi^*\eta_t)=a(t)\lambda + \Big( \beta_t-\phi_t^*\pi^*\eta_t\Big).$$
The action of $\phi_t$ on cohomology is trivial because $\phi_t$ is homotopic to ${\rm Id}_{{\cal M}}$. As $\pi^*\eta_t-\beta_t$ is exact, we deduce that $\psi_t^*\lambda-a(t)\lambda$ is exact. 
Hence the image by $\psi_t$ of every H-isotopic to the zero-section submanifold $\cl$ is also H-isotopic to the zero-section, see \cite[Corollary 3]{ArnFej21}. It admits a generating function quadratic at infinity $S_t:M\times\R^k\rightarrow \R$ and a Lipschitz continuous graph selector $u_t:M\rightarrow \R$. 
~\newline

\begin{remk}
The generating function $S_t$ is not unique. For every segment $[a ,b]$ of $\R$, we can choose an integer  $k\in\N$   uniformly in $t\in[a, b]$ and  {in a $C^{r}$ way} a {$C^r$ } generating function $S_t:M\times \R^k\rightarrow \R$ for {${\cal L}_t=\psi_t(\cl)$}. Then the associated graph selector\footnote{It can be proved that up to a constant, $u_t$ is independent of  the chosen generating function $S_t$ of { $\psi_t(\cl)$.}} $u_t$ also  depends {in a $C^{r}$ way}  on $t$.
\end{remk}

{
As in Proposition \ref{geneselec}, we define
$U_t:=\{q \in M, \, \xi \mapsto S_t(q,\xi)$ is Morse excellent$\}$, which is an open set of $M$ with full Lebesgue measure, on which $u_t$ is {$C^r$ } and
\begin{equation}\label{Eselecnonexact}\forall q\in U_t, \quad du_t(q) \in \psi_t(\cl_0) \textit{ i.e. } \eta_t(q)+du_t(q)\in\phi_t(\cl_0).\end{equation}

\begin{proposition}\label{tUouvert} The set ${{\cal U}=}\underset{t}{\bigcup} \,  U_t\times \{t\}$ is an open set of $M \times \R$, on which the function $(q,t) \mapsto u_t(q)$ is {$C^{r}$}. \end{proposition}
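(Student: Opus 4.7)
The plan is to combine the implicit function theorem, applied to track the critical points of $\xi\mapsto S_t(q,\xi)$ as $(q,t)$ varies, with the classical $C^0$-stability of graph selectors under deformations of a generating function quadratic at infinity.

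First I would fix $(q_0,t_0)\in{\cU}$ and look for an open neighborhood $V$ of $(q_0,t_0)$ contained in $\cU$ on which $(q,t)\mapsto u_t(q)$ is $C^r$. Since $q_0\in U_{t_0}$, the map $\xi\mapsto S_{t_0}(q_0,\xi)$ is Morse; being quadratic at infinity, it has only finitely many critical points $\xi_1,\dots,\xi_N$, each non-degenerate, with pairwise distinct critical values. The implicit function theorem applied to the equation $\partial_\xi S_t(q,\xi)=0$ at each $(q_0,t_0,\xi_i)$, whose hypothesis is exactly the invertibility of $\partial_\xi^2 S_{t_0}(q_0,\xi_i)$, produces $C^r$ maps $(q,t)\mapsto \xi_i(q,t)$ defined on a common neighborhood of $(q_0,t_0)$, each a non-degenerate critical point of $S_t(q,\cdot)$.

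Second, the quadratic-at-infinity hypothesis, uniform in $t$ on a compact neighborhood of $t_0$, yields a compact set $K\subset\R^k$ that contains every critical point of $S_t(q,\cdot)$ for all $(q,t)$ near $(q_0,t_0)$ (outside $K$ the partial $\partial_\xi S_t$ is bounded away from zero, uniformly). Hence $\{\xi_1(q,t),\dots,\xi_N(q,t)\}$ is the full critical set of $S_t(q,\cdot)$, and the critical values $c_i(q,t):=S_t(q,\xi_i(q,t))$ are $C^r$ functions that, being pairwise distinct at $(q_0,t_0)$, remain pairwise distinct on a smaller open neighborhood $V$. This already shows that $(q,t)\in\cU$ for every $(q,t)\in V$, so $\cU$ is open.

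For the $C^r$ regularity of $u$, I would invoke the well-known continuity of graph selectors with respect to the generating function: the min-max/spectral-invariant construction that defines $u_t$ from $S_t$ produces a continuous value on any compact $C^0$-family of GFQIs with uniformly chosen fiber dimension (as fixed here). This gives continuity of $(q,t)\mapsto u_t(q)$ on $V$. But on $V$, $u_t(q)$ takes values only in the finite set $\{c_1(q,t),\dots,c_N(q,t)\}$ of pairwise distinct continuous functions, so the index $i(q,t)$ with $u_t(q)=c_{i(q,t)}(q,t)$ is locally constant; shrinking $V$ to be connected, $i(q,t)\equiv i_0$, where $i_0$ is the index of the critical point selected at $(q_0,t_0)$. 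Therefore $u_t(q)=c_{i_0}(q,t)$ on $V$, which is $C^r$. The main obstacle is the last step, namely the continuous (in $t$) dependence of the graph selector on the generating function: this is standard for cohomological min-max constructions, but it must be invoked with care since the generating function has infinite-dimensional data; after that, the proof is a routine application of the implicit function theorem.
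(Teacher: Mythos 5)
Your proof is correct and follows essentially the same strategy as the paper: track the non-degenerate critical points of $S_t(q,\cdot)$ via the implicit function theorem, observe that their critical values remain pairwise distinct near $(q_0,t_0)$ to get openness, and then combine continuity of the graph selector with local constancy of the selected critical value to obtain $C^r$ regularity. The paper reaches openness via Thom transversality and the local parametrization $x_i(q,t)$ of $T^*_qM\cap\cl_t$ rather than via the GFQI bound on the critical set, but this is a cosmetic difference leading to the same local picture, and your treatment of why the selected index is locally constant is in fact a bit more explicit than the paper's.
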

\begin{proof}

{The set ${\cal W}=\{ (q, t)\in M\times \R;T_q^\star M \pitchfork {\cl_t}\}  $ is open thanks to Thom transversality theorem. Hence for every $(q_0, t_0)\in {\cal U}$, there exists an open subset $U$ of ${\cal W}$ that contains $(q_0, t_0)$, an integer $N\geq 1$ and $N$ $C^{r-1}$-maps $x_i:U\to T^*M$ such that \begin{itemize}
    \item $\pi\circ x_i(q, t)=q$;
    \item $\forall i\neq j; x_i(q, t)\neq x_j(q;t)$;
    \item $T^*_qM\cap \cl_t=\{ x_1(q, t); \dots ; x_N(q, t)\}$.
    \end{itemize}
As $S$ depends in a $C^r$ way on $(q, t)$, the map 
$Y: {\cal W}\to \R^N$ that is defined by
$$Y(q,t)=\big(S_t\left(j_S^{-1}(x_1(q,t))\right), \dots ,S_t\left(j_S^{-1}(x_N(q,t))\right)\big).$$
is continuous and then 
$$\cU\cap U=\{ (q,t)\in U; \forall i\neq j, S_t\left(j_S^{-1}(x_i(q,t))\right)\neq S_t\left(j_S^{-1}(x_j(q,t))\right)\}
$$
is open because it is the backward image by $Y$ of an open subset of $\R^N$. We have then proved that $\cal U$ is open.
}

Let $q_0 \in U_{t_0}$ for some $t_0$. By definition of the graph selector there exists $\xi_0$ such that $u_{t_0}(q_0)=S_{t_0}(q_0,\xi_0)$ and $\frac{\partial S_{t_0}}{\partial \xi_0}(q_0,\xi_0)=0$. Since $S_{t_0}(q_0,\cdot)$ is Morse, we may apply the implicit function theorem to get a {$C^r$} function $( q,t)\mapsto \xi(q,t)$ solving $\frac{\partial S_t}{\partial \xi}(q,\xi(q,t))=0$ on an open {connected} neighbourhood of $(q_0, t_0)$ {in $\cal U$}
. By continuity of $(t,q)\mapsto u_t(q)$ and since we excluded the case where $S_t(q,\cdot)$ attains a critical value more than once, we also have $u_t(q)=S_t(q,\xi(t,q))$ on this neighbourhood. Thus $(t,q)\mapsto u_t(q)$ is {$C^r$} at $(t_0,q_0)$, hence on the whole set $\cal U$. 
\end{proof}

}


\subsection{Proof of Theorem \ref{Tppal}}

{Let us begin with the case where $\cl$ is the zero section, denoted by $\cl_0$.
With the notations that we introduced in the previous paragraph, we are reduced to prove that $\text{DMI}(T_x\cl_0,(\psi_s)_{s\in[0,t]})=0$ for every $x\in \psi_{t}^{-1}(\text{graph}(du_{t|U_t}))$. This is a result of the two following lemmata for which we provide proofs.}
\begin{lemma}\label{LDMIgraphselec} There exists an integer $n_t$ such that 
$$\forall x\in \psi_{t}^{-1}({\rm graph}(du_{t|U_t}))\qquad  {\rm DMI}(T_x\cl_0,(\psi_s)_{s\in[0,t]})=n_t.$$
\end{lemma}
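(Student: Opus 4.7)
The strategy is to show that $\mathrm{MI}(\Gamma_x) := \mathrm{DMI}(T_x\cl_0,(\psi_s)_{s\in[0,t]})$ takes the same integer value for any two points $x$ in $\psi_t^{-1}(\mathrm{graph}(du_{t|U_t}))$, and to declare that common value to be $n_t$. Given $x_1,x_2$ in this set with $\psi_t(x_i)=du_t(q_i)$ for some $q_i\in U_t$, and assuming $M$ is connected, I would choose a continuous arc $c\colon[0,1]\to\cl_0$ inside the zero section joining $x_1$ to $x_2$ and consider the two-parameter family of Lagrangian subspaces
\[
H(u,s):=D\psi_s\bigl(T_{c(u)}\cl_0\bigr),\qquad (u,s)\in[0,1]\times[0,t].
\]
This family realizes the boundary of the square as a contractible loop in the Lagrangian Grassmannian $\Lambda(T^*M)$.

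The boundary loop decomposes as the concatenation of four arcs: the two dynamical paths $\Gamma_{x_i}(s)=H(i{-}1,s)$, the ``top'' arc $A_t(u):=T_{\psi_t(c(u))}\psi_t(\cl_0)=H(u,t)$ staying inside $T\psi_t(\cl_0)$, and the ``bottom'' arc $A_0(u):=T_{c(u)}\cl_0=H(u,0)$ staying inside $T\cl_0$. Homotopy invariance of the Maslov index applied to this contractible boundary loop gives
\[
\mathrm{MI}(\Gamma_{x_1})-\mathrm{MI}(\Gamma_{x_2})\;=\;\mathrm{MI}(A_0)-\mathrm{MI}(A_t).
\]

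It remains to prove that $\mathrm{MI}(A_0)=\mathrm{MI}(A_t)=0$. The first equality is immediate from Remark \ref{rmq path transverse}: the arc $A_0$ consists of tangent planes to the zero section, i.e.\ horizontal Lagrangian subspaces, which are everywhere transverse to the vertical foliation and hence never meet the singular cycle $\Sigma$. For $A_t$, the condition $q_i\in U_t$ forces $\xi\mapsto S_t(q_i,\xi)$ to be Morse excellent, so its critical Hessians are non-degenerate; by Lemma \ref{Lcaractvert} this means that $\psi_t(\cl_0)$ is transverse to the vertical at the endpoints $du_t(q_i)$ of $A_t$. Since $\psi_t(\cl_0)$ admits the GFQI $S_t$ with graph selector $u_t$ by the exactification construction of Subsection \ref{exactification}, Proposition \ref{Cselecindconst} applies directly to the arc $u\mapsto\psi_t(c(u))$ in $\psi_t(\cl_0)$ and yields $\mathrm{MI}(A_t)=0$.

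Combining these vanishings gives $\mathrm{MI}(\Gamma_{x_1})=\mathrm{MI}(\Gamma_{x_2})$, and this common integer is the desired $n_t$. The main subtlety to be careful about is verifying that all four corners of the square $H$ lie outside the singular cycle $\Sigma$, so that each constituent arc has a well-defined Maslov index and concatenation is additive in the Duistermaat sense; this is settled precisely by the horizontality argument at $s=0$ and the Morse non-degeneracy argument at $s=t$ invoked above.
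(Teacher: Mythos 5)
Your proof is correct and follows essentially the same strategy as the paper's: both exploit the homotopy-triviality of the boundary of the square $(u,s)\mapsto D\psi_s(T_{c(u)}\cl_0)$, reduce the difference $\mathrm{DMI}(T_{x_1}\cl_0,\dots)-\mathrm{DMI}(T_{x_2}\cl_0,\dots)$ to $\mathrm{MI}(A_0)-\mathrm{MI}(A_t)$, and kill these two terms via transversality of the zero section to the vertical and Proposition \ref{Cselecindconst} respectively. The paper phrases the homotopy argument as a one-parameter family of loops $\Gamma_\tau$ with $\Gamma_0$ trivial, while you invoke directly that a contractible boundary loop has vanishing Maslov index; these are the same argument in different dress.
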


{
\begin{lemma}\label{Lncont}
The map $t\mapsto n_t$ is locally constant.
\end{lemma}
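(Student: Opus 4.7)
The plan is to express $n_t$ as the Maslov index of a single continuous family of Lagrangian arcs $\Gamma_t$ parameterized by $t$, and then invoke homotopy invariance of the Maslov index: since $n_t \in \Z$ would then depend continuously on $t$, it must be locally constant.

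First I would fix $t_0 \in \R$ and pick a point $q_0 \in U_{t_0}$, which is non-empty because $U_{t_0}$ has full Lebesgue measure. By Proposition \ref{tUouvert}, the set $\cU = \bigcup_t U_t \times \{t\}$ is open in $M \times \R$, so there exists $\delta > 0$ such that $q_0 \in U_t$ for every $t \in I := (t_0-\delta, t_0+\delta)$ and such that $(q,t) \mapsto u_t(q)$ is $C^r$ near $\{q_0\} \times I$. In particular, $t \mapsto du_t(q_0)$ and hence $x_t := \psi_t^{-1}(du_t(q_0)) \in \cl_0$ both depend continuously on $t$. Reparameterizing time by the factor $t$, I would introduce the Lagrangian arc $\Gamma_t : [0,1] \to \Lambda$ defined by $\Gamma_t(\sigma) := D\psi_{\sigma t}(T_{x_t}\cl_0)$. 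Since the Maslov index is invariant under orientation-preserving reparameterization, Lemma \ref{LDMIgraphselec} gives $n_t = \mathrm{MI}(\Gamma_t)$, and the map $(\sigma,t) \mapsto \Gamma_t(\sigma)$ is jointly continuous on $[0,1] \times I$.

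The next step is to check that the two endpoints of $\Gamma_t$ stay in $\Lambda \setminus \Sigma$ uniformly in $t \in I$. The initial endpoint $\Gamma_t(0) = T_{x_t}\cl_0$ is the horizontal Lagrangian at $x_t$, trivially transverse to the vertical; the terminal endpoint $\Gamma_t(1) = T_{du_t(q_0)}\psi_t(\cl_0)$ is transverse to the vertical fiber $T^*_{q_0}M$ because $q_0 \in U_t$ (applying Lemma \ref{Lcaractvert} to $S_t$, since $\xi \mapsto S_t(q_0,\xi)$ is Morse excellent means every critical Hessian is non-degenerate). Consequently, one can close each $\Gamma_t$ by a continuously varying arc in the open set $\Lambda \setminus \Sigma$, producing a continuous family of loops in $\Lambda$; their Maslov indices then form a continuous $\Z$-valued function, which must be constant on $I$.

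The only foreseeable obstacle is the degenerate case $t_0 = 0$, where the reparameterization is singular and $\Gamma_0$ collapses to a point. This is handled separately: since $D\psi_0 = \mathrm{Id}$, for $|t|$ small enough the whole path $(D\psi_s(T_{x_t}\cl_0))_{s \in [0,t]}$ remains inside the open set $\Lambda \setminus \Sigma$, so $n_t = 0$ in a neighborhood of $0$. Beyond this minor point, the argument is essentially a careful bookkeeping of endpoints, the substantive content being the continuity of $(q_0,t) \mapsto du_t(q_0)$ supplied by Proposition \ref{tUouvert}.
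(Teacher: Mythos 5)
Your proof is correct, and it is close in spirit to the paper's, but the parametrisation is genuinely different in a way worth noting. The paper fixes the point $x_{t_0}=\psi_{t_0}^{-1}(du_{t_0}(q_0))$ on $\cl_0$ and tracks its flow image $\gamma(t)=\psi_t(x_{t_0})$, observing that $\pi(\gamma(t))\in U_t$ and hence that the one-parameter arc $t\mapsto T_{\gamma(t)}\cl_t=D\psi_t(T_{x_{t_0}}\cl_0)$ avoids $\Sigma$; the conclusion that $n_t$ is constant then follows in one line. That last step is terse: it implicitly uses that $\gamma(t)$ itself remains on the graph selector for $t$ near $t_0$, a local-uniqueness fact that follows from transversality of $\cl_{t_0}$ to the fibre at $du_{t_0}(q_0)$ together with the $C^r$ dependence of $(q,t)\mapsto u_t(q)$ from Proposition \ref{tUouvert}. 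Your argument sidesteps that unstated step by instead fixing the base point $q_0\in U_{t_0}$, using the openness of $\cU$ to keep $q_0\in U_t$ for nearby $t$, and taking $x_t:=\psi_t^{-1}(du_t(q_0))$, which is on the graph selector by construction; you then conclude via a two-parameter homotopy of the Lagrangian arcs $\Gamma_t$ with endpoints staying in $\Lambda\setminus\Sigma$. The trade-off is that you replace an implicit uniqueness argument by a slightly heavier homotopy bookkeeping (plus the separate treatment of $t_0=0$, where your reparametrisation degenerates). Both proofs rest crucially on Proposition \ref{tUouvert} and on Lemma \ref{LDMIgraphselec}, so they buy essentially the same thing; yours is a bit more explicit about why the chosen point is legitimate for $n_t$, while the paper's is shorter.
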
}

\begin{proof}[Proof of Lemma \ref{LDMIgraphselec} ]
We fix $t\in\R$. Let $\gamma: [0, 1]\rightarrow \psi_t(\cl_0)$ be a path such that for $i=0, 1$, $q_i=\pi(\gamma(i))\in U_t$ and $\gamma(i)=du_t(q_i)$. For $\tau\in[0, 1]$, we define a loop $\Gamma=\Gamma_\tau$ by
\begin{itemize}
\item $\forall s\in[0, 1], \Gamma_\tau(s)=T_{\psi_{t}^{-1}(\gamma(s\tau))}\cl_0$;
\item $\forall s\in[1, 2], \Gamma_\tau(s)=D\psi_{(s-1)t}\Big(T_{\psi_{t}^{-1}(\gamma(\tau))}\cl_0\Big)$;
\item $\forall s\in[2, 3], \Gamma_\tau(s)=T_{\gamma((3-s)\tau)}\psi_t(\cl_0)$;
\item $\forall s\in[3, 4], \Gamma_\tau(s)=D\psi_{(4-s)t}\Big(T_{\psi_{t}^{-1}(\gamma(0))}\cl_0\Big)$.
\end{itemize}
\begin{figure}
    \centering
    \includegraphics[scale=0.3]{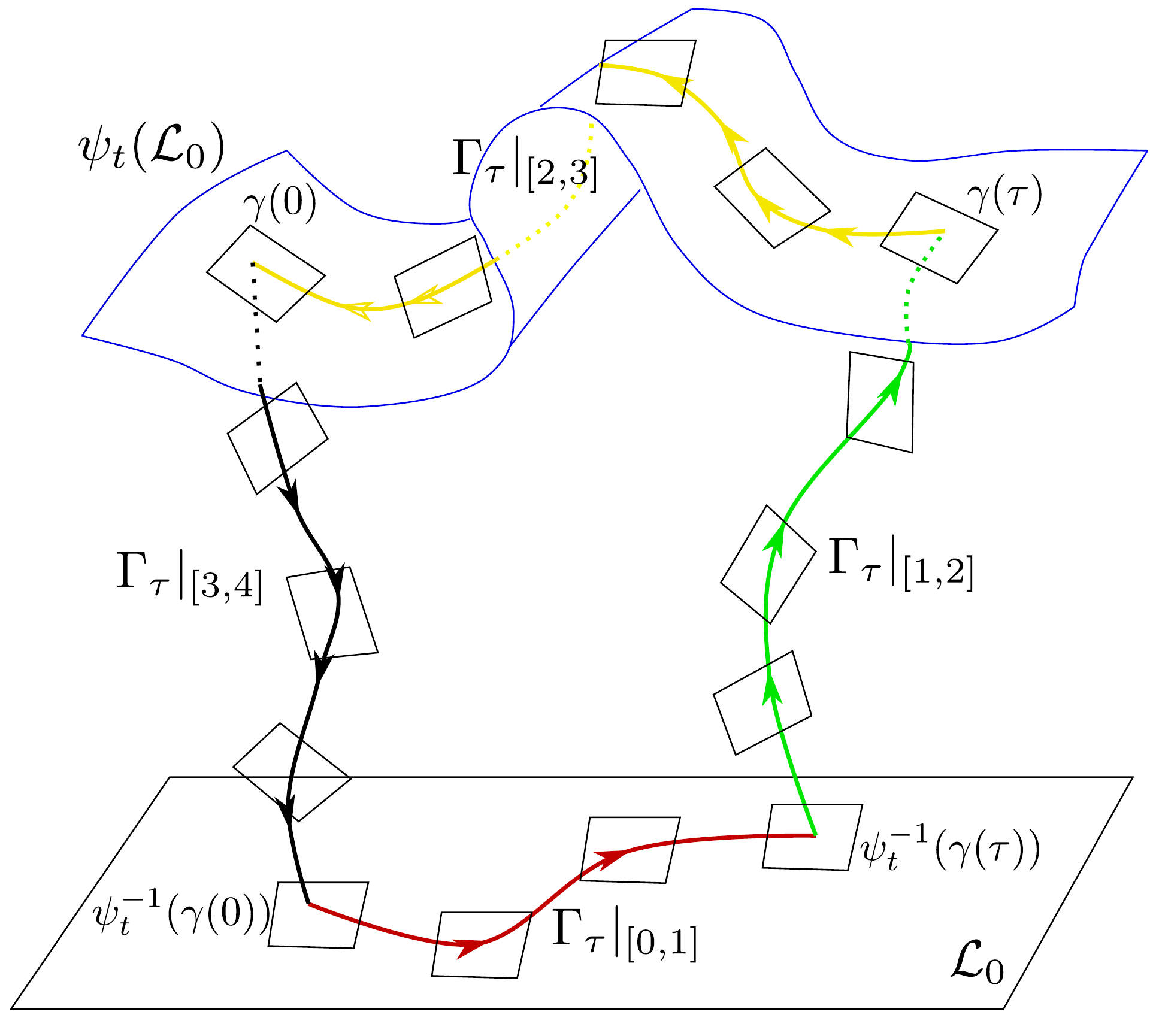}
    \caption{The loop $\Gamma_\tau$.}
    \label{fig:1}
\end{figure}
Along $\Gamma_{|[0, 1]}$, the Maslov index is zero because the path is on the zero section $\cl_0$. {Along $\Gamma_{|[1, 2]}$, the Maslov index is $\text{MI}\Big(\big( D\psi_{s}( T_{\psi_{t}^{-1}(\gamma(\tau))}\cl_0\big)_{s\in[0,t]}\Big)$. 
	Along $\Gamma_{|[2, 3]}$ the Maslov index is $-\text{MI}\Big(\big(  T_{\gamma(s)}\psi_t(\cl_0)\big) _{s\in[0,\tau]}\Big)$. Along $\Gamma_{|[3, 4]}$, the Maslov index is $-\text{MI}\Big(\big( D\psi_{s} (T_{\psi_{t}^{-1}(\gamma(0))}\cl_0)\big)_{s\in[0,t]}\Big)$.  Hence the total Maslov index along $\Gamma_\tau$ is 
\begin{equation*}
\text{MI}\Big(\big( D\psi_{s}( T_{\psi_{t}^{-1}(\gamma(\tau))}\cl_0\big)_{s\in[0,t]}\Big)-\text{MI}\Big(\big(  T_{\gamma(s)}\psi_t(\cl_0)\big) _{s\in[0,\tau]}\Big)-\text{MI}\Big(\big( D\psi_{s} (T_{\psi_{t}^{-1}(\gamma(0))}\cl_0)\big)_{s\in[0,t]}\Big).
\end{equation*}}
As $\tau\mapsto \Gamma_\tau$ is an homotopy, the total Maslov index along $\Gamma_\tau$ doesn't depend on $\tau$. Observe that
\begin{itemize}
\item for $\tau=0$, this index is $0$;
\item thanks to   Proposition \ref{Cselecindconst}, we have   $\text{MI}\Big(\big(  T_{\gamma(s)}\psi_t(\cl_0)\big) _{s\in[0,1]}\Big)=0$. Hence the total Maslov index along $\Gamma_1$ is 
$$
0=\text{MI}\Big(\big( D\psi_{s}( T_{\psi_{-t}(\gamma(1))}\cl_0\big)_{s\in[0,t]}\Big)-\text{MI}\Big(\big( D\psi_{s} (T_{\psi_{t}^{-1}(\gamma(0))}\cl_0)\big)_{s\in[0,t]}\Big).
$$
\end{itemize}
\end{proof}

\begin{proof}[Proof of Lemma \ref{Lncont}] 
Let us fix {$(q_0,t_0)\in \cU$}. {By Proposition \ref{tUouvert} and continuity of $(\psi_s)$, there exists $\varepsilon>0$ such that
$$\forall t\in (t_0-\varepsilon, t_0+\varepsilon), ({ \pi (} \psi_t\circ \psi_{t_0}^{-1}({ du_{t_0} (q_0)}){)},t)\in\cU.$$
We denote $\gamma(t)=\psi_t\circ \psi_{t_0}^{-1}({ du_{t_0} (q_0)})$ and $\cl_t=\psi_t(\cl_0)$. Then the arc 
$t\in(t_0-\varepsilon, t_0+\varepsilon)\mapsto T_{\gamma(t)}\cl_t $ doesn't intersect the singular cycle. Hence $t\in (t_0-\varepsilon, t_0+\varepsilon)\mapsto n_t$ is constant.

}

\end{proof}
{Observing that $n_0=0$, we combine the two lemmata for $t=1$ to get that ${\rm DMI}(T_x\cl_0,(\psi_s)_{s\in[0,1]})=0$ for all $x\in \psi_1^{-1}({\rm graph}(du_{1|U_1}))= \phi_1^{-1}({\rm graph}((\eta_1+du_1)_{|U_1}))$. 
Since $\psi$ is obtained by composing $\phi$ by a vertical translation (see paragraph \ref{exactification}), the Maslov index is the same for $\phi_t$ and $\psi_t$ (see Remark { at the end of section}  \ref{rmk invariance diffeo cs}), and Theorem \ref{Tppal} is proved in the case where $\cl$ is the zero section, taking $u=u_1$, $\eta=\eta_1$ and $U=U_1$.

Let us now assume only that $\cl$ is a Lagrangian graph, i.e., the graph of a closed $1$-form $\nu$.} We recall that all the diffeomorphisms $T_t:\M\righttoleftarrow$ defined by $T_t(p)=p+t\nu$ are symplectic.\\
Using $(T_t)$ and $(\phi_t)$, we will define an isotopy $(F_t)_{t\in [0, 1]}$ such that $F_0={\rm Id}_\M$ and $F_1(\cl_0)=\phi_1(\cl)$. Let $\alpha:[0, 1]\rightarrow [0,1]$ be a {smooth} non-decreasing function 
such that $\alpha(0)=0$, $\alpha(1)=1$ and $\alpha$ is constant equal to $\frac{1}{2}$ when restricted  to some neighbourhood of $\frac{1}{2}$. We the introduce $(F_t)$ by
\begin{itemize}
\item for $t\in [0, \frac{1}{2}]$, $F_t=T_{2\alpha({t})}$;
\item for $t\in [\frac{1}{2}, 1]$, $F_t=\phi_{2\alpha(t)-1}\circ T_1$.  
\end{itemize} 
The isotopy $(F_t)$  is an isotopy of conformally symplectic  diffeomorphisms such that $F_0= {\rm Id}_{{\cal M}}$. Applying the first case of this proof, there exist a closed 1-form $\eta$ of $M$ and a Lipschitz map $u:M\rightarrow \R$ that is {$C^r$} 
on an open subset $U$ of $M$ with full Lebesgue measure such that
$${\rm graph}(\eta+du)_{|U}\subset F_1(\cl_0)=\phi_1(\cl)$$
and
$$\forall x\in F_1^{-1}\big( {\rm graph}(\eta+du)_{|U}\big),\quad {\rm DMI}(T_x\cl_0, (F_t)_{t\in[0, 1]})=0.$$
Observe that the path $(DF_tT_x\cl_0)_{t\in[0, \frac{1}{2}]}=(DT_{2\alpha(t)})_{t\in[0, \frac{1}{2}]}$ has zero Maslov index since all these Lagrangian subspaces are 
transverse to the vertical.  Hence 
$${\rm DMI}(T_x\cl_0, (F_t)_{t\in[0, 1]})={\rm DMI}({T_{T_1(x)}\cl}, (F_t)_{t\in[\frac{1}{2}, 1]}).$$
The isotopy $(F_t)_{t\in[\frac{1}{2}, 1]}$ is just a reparametrization of the isotopy $(\phi_t)_{t\ in [0, 1]}$, hence we obtain finally
$$ \forall q\in U, p:=\phi^{-1}_1(\eta(q)+du(q))\in \cl\quad\text{and}\quad {\text{\rm DMI}\Big(T_{p}\cl,(\phi_s)_{s\in[0,1]}
	\Big)=0}.$$

\section{Angular Maslov index}\label{angular MI}
There are different approaches to Maslov index, at least three of these are contained in \cite{BargeGhys1992}. To prove some of our results, we will use the second approach that we explain now.
\subsection{Definition of the angular Maslov index}\label{def angular MI subsection}
{\color{black}In this section as in sub-section \ref{ssremindersMslov}, we assume that $(\M,\omega)$ is a $2d$-dimensional  symplectic manifold that admits a Lagrangian foliation $\cal V$. We denote by  $V(x)=V_x:=T_x\cal V$ its associated Lagrangian bundle. We endow $\M$   with an almost complex structure $J:T\M\righttoleftarrow$ that is compatible with $\omega$. We briefly recall that this means that
  \begin{itemize}
  \item for every $x\in\M$, $J_x:T_x\M\righttoleftarrow$ is linear and $J^2=-{\rm Id}_{T\M}$;
  \item $ $every $J_x$ is symplectic;
  \item for every $x\in\M$, the symmetric bilinear form $\omega (., J_x.)$ is positive definite. We denote $g:=\omega(., J.)$. 
  \end{itemize}
  The complex structure is then defined on every $T_x\M$ by 
  $$\forall (\lambda=\lambda_1+i\lambda_2, v)\in \C\times T_x\M, \lambda v=\lambda_1v+\lambda_2Jv.$$
The equality
  $$\forall x\in\M, \forall u, v\in T_x\M, \Theta(u, v)=g(u, v)+i\omega (u,v)$$
  define a positive definite Hermitian form on $T_x\M$. 
   We denote by $\cU(\M)$ the fiber bundle whose fibers $\cU_x(\M)$ are the unitary transformations of $T_x\M$.
  Observe that a real  $d$-dimensional linear subspace $L$ of the complex space $T_x\M$ is Lagrangian if and only if the Hermitian form $\Theta_x$ restricted to $L$ is real (and then $\Theta_x$ is  a real scalar product). Hence the group $\cU(\M)$  acts on the Lagrangian Grassmannian $\Lambda$. It is classical {\color{black}(see \cite[Lemma 3.10]{BatesWei97} or \cite{Aud03})} that the action of $\cU_x(\M)$ on $\Lambda_x$ is transitive.  \\
  If ${\rm Stab}_x$ is in the stabilizer of $V(x)$, then ${\rm Stab}_x$ preserves the scalar product that is the restriction of $\Theta_x$ to $V(x)$, i.e. is an orthonormal transformation   of $V(x)$.  Moreover, every orthogonal transformation of $V(x)$ can be extended to a unique unitary transformation of $T_xM$. We denote by  $\cO(\M)$ the fiber bundle whose fibers $\cO_x(\M)$ are these transformations that we call  orthogonal transformations of $T_x\M$. 
  There is a 
  {natural bijection} between $\cU_x(\M)/\cO_x(\M)$ and $\Lambda_x$ that maps  ${\rm Stab}_x$ on $V_x$. {This bijection sends each $[\phi]\in \cU_x(\M)/\cO_x(\M)$ to the Lagrangian space $\phi(V_x)\in\Lambda_x$, where $\phi\in\cU_x(\M)$ is a representative of $[\phi]$.} We denote by $\cR_x:\Lambda_x\rightarrow \cU_x(\M)/\cO_x(\M)$ its inverse bijection. 

The map $\delta_x:\cU_x(\M)\rightarrow \C^*$ defined by $\delta_x (\varphi)=\big({\rm det} \varphi\big)^2$ is a morphism of groups whose kernel contains $\cO_x(\M)$ and whose range is the set $U(1)$ of complex numbers with modulus 1. Hence we can define 
$\bar\delta_x:\cU_x(\M)/\cO_x(\M)\rightarrow U(1)$ and then $\Delta=\bar\delta\circ\cR:\Lambda\rightarrow U(1)$.
\begin{defi} Let $\Gamma:[a, b]\rightarrow \Lambda$ be a continuous map. Let $\theta:[a, b]\rightarrow \R$ be any continuous lift of $\Delta\circ \Gamma$, i.e. such that 
$$\forall t\in [a, b], \exp(i\theta(t))=\Delta(\Gamma(t)).$$
Then the {\sl angular Maslov index} of $\Gamma$ is 
\begin{equation}
\alpha\text{MI}(\Gamma):=\dfrac{\theta(b)-\theta(a)}{2\pi}.
\end{equation}
\end{defi}
  \begin{defi}
 Let $(\phi_t)
 $ be an isotopy of conformally symplectic diffeomorphisms of $\M$. Let $L\in\Lambda(\M)$ and $t>0$. Define the path
\[
s\in [0,t]\mapsto D\phi_sL\in\Lambda(\M).
\]
The  {\sl dynamical angular Maslov index} of $L$ at time $t$ is
\[
\text{D}\alpha\text{MI}(L,(\phi_s)_{s\in[0,t]})=\alpha\text{MI}((D\phi_sL)_{s\in[0, t]}).
\]
Whenever the limit exists, the {\sl asymptotic angular Maslov index} of $L$, is
\[
\text{D}\alpha\text{MI}_\infty(L,(\phi_s)_{s\in[0,+\infty]}):=\lim_{t\to+\infty}\dfrac{\text{D}\alpha\text{MI}(L,(\phi_s)_{s\in[0,t]})}{t}.
\]
 \end{defi}
 \begin{remk} In fact, {the existence of a Lagrangian foliation implies that} the bundle $\Lambda$ is trivial, diffeomorphic to $\M\times U(d)/O(d)$ where $U(d)$ and $O(d)$ are the groups of $d\times d$ unitary and orthogonal matrices respectively, see e.g. \cite[Section 1.2]{CGIP}. 
 \end{remk}
 
 {\begin{remk}
     Observe that the angular Maslov index is continuous with respect to the path $\Gamma$. Thus, the dynamical Maslov index at a fixed time $t$ is continuous with respect to $L\in\Lambda(\M)$, as long as the isotopy $(\phi_t)$ is at least $C^1$.
 \end{remk}}
 
 The following result is classical, see  \cite[Lemma 2.1]{CGIP}.
{\color{black}\begin{proposition}\label{indep lagrangian sub}
	Let $(\phi_t)
	$ be an isotopy of conformally symplectic diffeomorphisms of $\M$. Let $x\in \M$ and let $L_1,L_2\in\Lambda_x$. Then, for every $t>0$,
	\[
	\vert \text{D}\alpha\text{MI}(L_1,(\phi_s)_{s\in[0,t]})-\text{D}\alpha\text{MI}(L_2,(\phi_s)_{s\in[0,t]})\vert < 8d.
	\]
	In particular, whenever the asymptotic angular Maslov index at $x$ exists, it does not depend on the chosen Lagrangian subspace $L\in\Lambda_x$.
\end{proposition}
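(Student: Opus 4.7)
The approach I will take is a disk-filling argument. Given $L_1, L_2 \in \Lambda_x$, I first pick any continuous path $\sigma : [0,1] \to \Lambda_x$ inside the fibre joining $L_1$ to $L_2$, then push it forward by the isotopy to construct the continuous map
$$H : [0,1] \times [0,t] \to \Lambda(\M), \qquad H(r,s) := D\phi_s\bigl(\sigma(r)\bigr).$$
By construction, the four oriented sides of the rectangle are precisely the dynamical paths $\Gamma_i(s) := D\phi_s L_i$ (for $r = 0, 1$), the fibre path $\sigma$ (at $s = 0$), and its push-forward $D\phi_t \circ \sigma$ (at $s = t$).

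The second step exploits the fact that $[0,1] \times [0,t]$ is simply connected, so the composition $\Delta \circ H : [0,1]\times[0,t] \to U(1)$ lifts continuously to $\tilde H : [0,1]\times[0,t] \to \R$ with $\exp(i \tilde H) = \Delta \circ H$. Summing the four boundary increments of $\tilde H$ around the rectangle telescopes to zero, and after identifying each increment with the corresponding $2\pi\cdot\alpha\text{MI}$, this yields the key identity
$$\alpha\text{MI}(\Gamma_1) - \alpha\text{MI}(\Gamma_2) = \alpha\text{MI}(\sigma) - \alpha\text{MI}(D\phi_t \sigma).$$
Thus the difference of the two dynamical angular Maslov indices is reduced to the difference of the angular Maslov indices of two paths lying each in a single fibre $\Lambda_y \simeq U(d)/O(d)$.

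The third and main step is to bound each fibre contribution by roughly $4d$. Using the explicit form $\Delta([U]) = (\det U)^2$ on $U(d)/O(d)$, I will represent $\sigma$ by a ``geodesic'' $r \mapsto [\exp(rA) U_1]$ in $U(d)/O(d)$, where $A$ is the skew-Hermitian logarithm of $U_2 U_1^{-1}$ chosen with eigenvalues of imaginary part in $(-\pi, \pi]$; then $\alpha\text{MI}(\sigma) = \mathrm{Im}\,\mathrm{tr}(A)/\pi$, so $|\alpha\text{MI}(\sigma)| \le d$. The main obstacle will be that $D\phi_t\sigma$ is not of our choice in the fibre $\Lambda_{\phi_t(x)}$. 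To control it, I will polar-decompose $D\phi_t = U_t P_t$ with $U_t$ unitary and $P_t$ symmetric positive-definite symplectic, observe that the unitary factor $U_t$ contributes only a global constant phase to $\Delta$ (and hence leaves the angular Maslov index of a path in a fibre unchanged), and use the contractibility of the positive-definite symplectic cone to bound the additional winding produced by $P_t$. Finally, since shifting the homotopy class of $\sigma$ by an element of $\pi_1(\Lambda_x) \simeq \Z$ changes both $\alpha\text{MI}(\sigma)$ and $\alpha\text{MI}(D\phi_t \sigma)$ by the same integer, one can optimize the choice to keep both absolute values simultaneously bounded, yielding the claimed estimate $|\alpha\text{MI}(\Gamma_1) - \alpha\text{MI}(\Gamma_2)| < 8d$.
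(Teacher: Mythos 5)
The paper itself gives no proof of this proposition; it cites it as classical from Lemma 2.1 of \cite{CGIP}, so there is no internal argument to compare against. Evaluating your proposal on its own terms: the reduction in Steps 1--2 is correct (the rectangle lift of $\Delta\circ H$ does telescope, giving $\alpha\text{MI}(\Gamma_1)-\alpha\text{MI}(\Gamma_2)=\alpha\text{MI}(\sigma)-\alpha\text{MI}(D\phi_t\sigma)$), and your bound $|\alpha\text{MI}(\sigma)|\le d$ for the geodesic $\sigma$ is also correct, since $\alpha\text{MI}(\sigma)=\mathrm{Im}\,\mathrm{tr}(A)/\pi$ and the eigenvalues of $A$ lie in $i(-\pi,\pi]$.

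The genuine gap is in the bound on $\alpha\text{MI}(D\phi_t\sigma)$, which you do not actually establish. Saying that the unitary factor of $D\phi_t$ only shifts $\Delta$ by a constant phase is fine, but the remaining step --- ``use the contractibility of the positive-definite symplectic cone to bound the additional winding produced by $P_t$'' --- does not yield a bound. Contractibility of the cone $P$ gives you a continuous lift $\tilde g:P\to\R$ of $\Psi\mapsto\Delta(\Psi L_1)/\Delta(\Psi L_2)$, but $P$ is noncompact and a continuous function on a contractible noncompact space need not be bounded. To control the lift one has to count, uniformly in $\tilde P\in P$, the crossings of the paths $r\mapsto\tilde P\,\sigma(r)$ with the train $\Sigma$; you state no such estimate. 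Moreover, your closing step is a red herring: replacing $\sigma$ by $\sigma$ concatenated with a loop in $\pi_1(\Lambda_x)\cong\Z$ shifts $\alpha\text{MI}(\sigma)$ and $\alpha\text{MI}(D\phi_t\sigma)$ by the \emph{same} integer (as you yourself note), and therefore leaves the difference $\alpha\text{MI}(\sigma)-\alpha\text{MI}(D\phi_t\sigma)$, which is exactly the quantity you must bound, unchanged; no choice of homotopy class can make both terms simultaneously small if their difference is large. So the ``optimization'' cannot substitute for the missing uniform bound on the image path, and as written the argument does not reach the stated inequality.
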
}
  That is why we will often mention the asymptotic Maslov index at a point.
  \begin{proposition}\label{MI existence SCH}
  Let $(\phi_t)
  $ an isotopy of conformally symplectic diffeomorphisms of $\M$ such that $\phi_0={\rm Id}_\M$ and  $\phi_{t+1}= \phi_t\circ\phi_1$ (resp. $(\phi_t)_t$ is a flow). 
  If $\mu$ is a Borel probability measure with compact support that is invariant by $\phi_1$ (resp. by $(\phi_t)_t$), then the asymptotic Maslov index exists at $\mu$-almost every point $x\in\cm$. 
  \end{proposition}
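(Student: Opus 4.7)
\medskip

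\noindent\emph{Proof plan.} The plan is to reduce to the angular Maslov index and then apply Kingman's sub-additive ergodic theorem to an almost-cocycle built from it. Since for any Lagrangian arc the integer Maslov index and the angular Maslov index differ by a quantity that is uniformly bounded in terms of $d$ (they record essentially the same winding, with the integer version counting signed crossings of $\Sigma$ while the angular version tracks the argument of $\Delta$ continuously), dividing by $t$ shows that the asymptotic Maslov index exists at $x$ if and only if the asymptotic angular Maslov index does, and they coincide up to a normalization constant. Hence it suffices to prove the $\mu$-almost-everywhere existence of the asymptotic angular Maslov index.

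First, I would choose a continuous Lagrangian section $L:\M \to \Lambda$, whose existence follows from the triviality of $\Lambda$ recalled just after the definition of the angular Maslov index. For each $n \in \N$, set
\[
c_n(x) := \text{D}\alpha\text{MI}\bigl(L(x),(\phi_s)_{s\in[0,n]}\bigr).
\]
By continuity of $L$, of the isotopy and of the angular index, the function $c_1$ is bounded on $\mathrm{supp}(\mu)$.

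Next, I would verify the quasi-cocycle identity. Concatenating the angular index on $[0,m]$ and $[m,m+n]$ and using the semigroup relation $\phi_{u+m} = \phi_u \circ \phi_m$ (for $m \in \N$) yields
\[
c_{n+m}(x) = c_m(x) + \text{D}\alpha\text{MI}\bigl(D\phi_m L(x),(\phi_u)_{u\in[0,n]}\bigr).
\]
Both Lagrangian subspaces $D\phi_m L(x)$ and $L(\phi_m x)$ belong to the fiber $\Lambda_{\phi_m x}$, so Proposition \ref{indep lagrangian sub} (independence from the Lagrangian subspace up to $8d$) gives
\[
\bigl|c_{n+m}(x) - c_m(x) - c_n(\phi_m x)\bigr| < 8d.
\]
Putting $a_n := c_n + 8d$ makes $(a_n)$ sub-additive along the dynamics of $\phi_1$, while $b_n := c_n - 8d$ is super-additive. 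The integrability hypothesis of Kingman's theorem holds because $|c_{n+1}(x)-c_n(x)|$ is uniformly bounded on $\mathrm{supp}(\mu)$. Applying Kingman to $(a_n)$ and to $(-b_n)$ produces the $\mu$-a.e.\ existence of $\lim_n a_n(x)/n$ and of $\lim_n b_n(x)/n$, and since these differ only by $16d/n\to 0$, I conclude that
\[
\lim_{n\to\infty}\frac{c_n(x)}{n}\quad\text{exists for $\mu$-a.e.\ }x.
\]

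Finally, to pass to continuous time, I would write $t = n+r$ with $r\in[0,1)$ and use $\phi_t = \phi_r\circ \phi_n$ together with the continuous dependence of the angular index on $t$ to see that $c(t,x) - c_n(x)$ is uniformly bounded on $\mathrm{supp}(\mu)$; dividing by $t$ gives the asymptotic angular index of $L(x)$ at $\mu$-a.e.\ $x$. The flow case is handled identically (replacing $\phi_1$ by $\phi_1$ viewed as a time-one map of the flow, or equivalently running Kingman for the time-$1$ map and then comparing with intermediate times). Independence of the limit from the choice of $L \in \Lambda_x$ follows from Proposition \ref{indep lagrangian sub}, so the asymptotic Maslov index at $x$ is well-defined for $\mu$-a.e.\ $x$, as required.

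The main technical point is the quasi-cocycle bound and its translation into a genuine sub-additive sequence amenable to Kingman's theorem; this is essentially book-keeping once Proposition \ref{indep lagrangian sub} is in hand. An alternative route, perhaps closer in spirit to Schwartzman's original argument, would construct a $D\phi_1$-invariant probability measure $\tilde\nu$ on $\Lambda$ with $p_*\tilde\nu=\mu$ (via Krylov--Bogolyubov applied to averages of $(D\phi_1^n)_*(L_*\mu)$ inside the compact fibered piece $\Lambda|_{\mathrm{supp}(\mu)}$) and then apply Birkhoff's theorem to the continuous real-valued function on $\Lambda$ obtained by the continuous lift of $L\mapsto \Delta(D\phi_1 L)/\Delta(L) \in U(1)$.
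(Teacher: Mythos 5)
Your proposal is correct, and it takes a genuinely different route from the paper's. The paper first reduces to ergodic $\mu$, then (in the flow case) lifts to $\Lambda$ by fixing $L_0\in\Lambda_x$ over a $\mu$-regular $x$, extracts a $D\phi_t$-invariant limit measure $\nu$ of the time averages $[L_0]_T$ with $p_*\nu=\mu$, and invokes Schwartzman's Birkhoff-type result \cite[Section 4]{Schwartzman1957} for the continuous $U(1)$-valued function $\Delta$ to get $\nu$-a.e.\ existence; for the non-flow case $\phi_{t+1}=\phi_t\circ\phi_1$ it builds a suspension flow on $\T\times\Lambda$. Proposition \ref{indep lagrangian sub} is then used to descend from $\nu$-a.e.\ $(x,L)$ to $\mu$-a.e.\ $x$. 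Your approach instead works directly downstairs: picking a continuous Lagrangian section $L:\M\to\Lambda$ (which exists since $\Lambda$ is trivial), noting that $\mathrm{D}\alpha\mathrm{MI}$ is additive under concatenation and using $\phi_{u+m}=\phi_u\circ\phi_m$ for integer $m$, you get the exact cocycle identity
\[
c_{n+m}(x)=c_m(x)+\mathrm{D}\alpha\mathrm{MI}\bigl(D\phi_mL(x),(\phi_u)_{u\in[0,n]}\bigr),
\]
and Proposition \ref{indep lagrangian sub} converts it to a quasi-cocycle with uniform error $8d$, to which Kingman applies. This has several advantages: it avoids the ergodic decomposition, avoids lifting measures to $\Lambda$ via Krylov--Bogolyubov (and the attendant limit-point extraction), and handles the discrete and continuous cases in one pass without a suspension construction, since the cocycle only needs integer increments and the step from integer to real times is a uniformly bounded remainder. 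What the paper's route buys in exchange is that Schwartzman's statement gives the existence of the asymptotic rotation number for the lifted flow on $\Lambda$ directly, rather than through a sub-/super-additive sandwich; your second sketched alternative (invariant lift $\tilde\nu$ on $\Lambda$, Birkhoff for the lift of $L\mapsto\Delta(D\phi_1L)/\Delta(L)$) is essentially the paper's argument. One small thing worth making explicit for a clean write-up: the uniform bound on the fractional-time remainder $\mathrm{D}\alpha\mathrm{MI}(D\phi_nL(x),(\phi_u)_{u\in[0,r]})$, $r\in[0,1)$, uses that $\mathrm{supp}(\mu)$ is $\phi_1$-invariant so $\phi_n(x)$ stays in the compact $\mathrm{supp}(\mu)$, together with continuity of $\mathrm{D}\alpha\mathrm{MI}$ jointly in $(L,r)$ over the compact set $\Lambda|_{\mathrm{supp}(\mu)}\times[0,1]$; as stated this is implicit in ``continuous dependence'' but deserves a sentence.
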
 
  \begin{proof} The proof uses methods of \cite[Section 4]{Schwartzman1957}. We assume that $\mu$ is ergodic: if not, using ergodic decomposition theorem, see e.g. \cite{Mane1987},  we deduce the result for $\mu$ from the result for ergodic measures. 
  
  Let us begin with  the  case when $(\phi_t)_t$ is a flow. The map $D\phi_t: \Lambda\rightarrow \Lambda$ defines a flow on $\Lambda$. Let $x\in\M$ be a regular point for $\mu$, i.e. such that the family of  measures $[x]_T$ defined by $[x]_T(f)=\frac{1}{T}\int_0^Tf(\phi_t(x))dt$ tends to $\int_\cm fd\mu$ for every continuous $f:\M\rightarrow \R$. Recall that $\mu$-almost every point $x\in\M$ is regular for $\mu$.  Let us fix $L_0\in \Lambda_x$ and let $\nu$ be any limit point at infinity of the family of measures $[L_0]_T$ defined by 
  $$\forall F\in C^0(\Lambda, \R), [L_0]_T(F)=\frac{1}{T}\int_0^TF(D\phi_tL_0)dt.$$ Then $\nu$ is an invariant measure for $(D\phi_t)$, see \cite{BobKry1957},  such that $p_*\nu=\mu$, where $p:T\M\to\M$ is the canonical projection.
 We have defined on $\Lambda$ the continuous function  $\Delta:\Lambda\rightarrow U(1)$. 
 A direct result of \cite[Section 4]{Schwartzman1957}, is that 
  $\text{D}\alpha\text{MI}_\infty(L,(\phi_s)_{s\in[0,+\infty]})$ exists and is finite at $\nu$-almost every point $(x, L)\in \Lambda$.  Since the asymptotic angular Maslov index of $L$ does not depend on the chosen Lagrangian subspace (see Proposition \ref{indep lagrangian sub}), we conclude that it exists at $p_*\nu=\mu$-almost every point $x\in\M$.
  
  When $\phi_{t+1}=\phi_t\circ \phi_1$, we define a flow $(F_t)$ on $\T\times \Lambda$ by $F_t(s, L)= (t+s, D\phi_tL)$. Then we apply Schwartzman's result to the function $(t, L)\mapsto \Delta(L)$, this gives the wanted result. 
  
  \end{proof}

\subsection{The angles of a Lagrangian subspaces} 
Let $(\M,\omega)$ be a $2d$-dimensional symplectic manifold that admits a Lagrangian foliation $\cv$. Let $J$ be an almost complex structure compatible with $\omega$. We introduce the notion of angles of a Lagrangian subspace $L\in\Lambda$ with respect to $J\cv$. For details, we refer to \cite{LPS}. 

\begin{nota}
	For every $x\in \M$, we denote by $JV(x)$ the image by the isomorphism $J_x$ of the Lagrangian subspace $V(x)=T_x\cv$.
\end{nota}
\begin{proposition}[Section 1.4 in \cite{LPS}]\label{prop JP}
{Let $(E^{2d},\omega)$ be a symplectic vector space, endowed with a complex structure compatible with $\omega$. Fix a Lagrangian subspace $H\subset E$. For every Lagrangian subspace $L\subset E$ there exists a unique unitary isomorphism of $E$ denoted by $\Phi_{H,L}
$ such that
	\begin{itemize}
		\item $\Phi_{H,L}(H)=L$ ;
	\item $\Phi_{H,L}$ is diagonalizable relatively to a unitary  
	basis {of $E$ whose vectors are} in $H$, with eigenvalues of the form $e^{i\theta_j}$, $j=1,\dots,d$, with $$\theta_j\in\left]-\frac \pi 2,\frac \pi 2\right]\text{ for }j=1,\dots,d\, .$$
		\end{itemize}}

\end{proposition}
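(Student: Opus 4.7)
The plan is to fix a $g$-orthonormal basis of $H$ for the real inner product $g = \Theta\vert_H$; since $H$ is Lagrangian this is automatically a $\C$-unitary basis of $E$, and it identifies $E$ with standard Hermitian $\C^d$ (sending $J$ to multiplication by $i$) and $H$ with $\R^d$. In this model, a $\C$-linear unitary $\Phi$ is diagonalizable in a real orthonormal basis of $H$ with unimodular eigenvalues if and only if $\Phi = O D O^T$ for some $O \in O(d)$ and some diagonal unitary $D$, equivalently if and only if $\Phi^T = \Phi$ as a matrix. Lagrangian subspaces $L \subset \C^d$ correspond to cosets $U \cdot O(d) \in U(d)/O(d)$ via $L = U(\R^d)$. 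If $\Phi$ is symmetric and $\Phi(H) = L$, then $\Phi = U K$ for some $K \in O(d)$ and hence $\Phi^2 = \Phi \Phi^T = U K K^T U^T = U U^T =: S_L$, a symmetric unitary matrix depending only on $L$.

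Existence thus reduces to constructing a symmetric unitary square root of $S_L$ whose spectrum lies in the half-circle $\mathcal{C} = \{e^{i\theta} : \theta \in (-\pi/2, \pi/2]\}$. Writing $S_L = A + iB$ with $A, B$ real symmetric, the identity $S_L^* S_L = I$ unpacks into $A^2 + B^2 = I$ together with $[A, B] = 0$. Commuting real symmetric matrices share a real orthonormal eigenbasis by the spectral theorem, so there exists $O \in O(d)$ with $O^T S_L O = \mathrm{diag}(e^{i\alpha_1}, \dots, e^{i\alpha_d})$ for uniquely chosen $\alpha_j \in (-\pi, \pi]$. Setting $\theta_j := \alpha_j/2 \in (-\pi/2, \pi/2]$ and $\Phi := O \, \mathrm{diag}(e^{i\theta_1}, \dots, e^{i\theta_d}) O^T$ yields a symmetric unitary matrix with $\Phi^2 = S_L$ and spectrum in $\mathcal{C}$, so in particular $\Phi(\R^d) = L$; this will be the candidate $\Phi_{H,L}$.

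Uniqueness would follow by a spectral argument. The squaring map $e^{i\theta} \mapsto e^{2i\theta}$ is a bijection from $\mathcal{C}$ onto the whole unit circle (the endpoint $\theta = \pi/2$ sends to $-1$, whose other square root $-i$ corresponds to $-\pi/2 \notin \mathcal{C}$). If $\Phi_1, \Phi_2$ both satisfy the conclusion, each commutes with $S_L = \Phi_k^2$ and preserves every $S_L$-eigenspace $E_\alpha = \ker(S_L - e^{i\alpha} I)$; on $E_\alpha$ it must act as the unique $\mathcal{C}$-root $e^{i\alpha/2}$ of $e^{i\alpha}$, so $\Phi_1 = \Phi_2$. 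The main technical ingredient is the simultaneous real orthogonal diagonalization of the commuting parts $A, B$ of $S_L$ (an Autonne--Takagi factorization in the unitary case); the only subtle point is the half-open convention $(-\pi/2, \pi/2]$, which is forced by the eigenvalue $-1$ of $S_L$, at which one must break the ambiguity between $\pm i$ as its $\Phi$-roots.
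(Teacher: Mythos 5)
The paper states this result with a citation to \cite{LPS} and does not prove it, so there is no internal proof to compare against; I assess correctness only. Your outline is sound: the identification $E\cong\C^d$, $H\cong\R^d$ via a $g$-orthonormal basis of $H$; the equivalence between ``unitary and diagonalizable in a real orthonormal basis of $H$ with unimodular eigenvalues'' and ``symmetric unitary'' (via simultaneous orthogonal diagonalization of the commuting real symmetric parts $A,B$ of $\Phi=A+iB$); the observation that $\theta\in(-\pi/2,\pi/2]$ is a fundamental domain for squaring on the circle; and the eigenspace argument for uniqueness are all correct.

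There is one unjustified step in the existence half. You show that any symmetric unitary $\Phi$ with $\Phi(\R^d)=L=U(\R^d)$ satisfies $\Phi^2=UU^T=:S_L$, and then claim existence ``reduces to'' producing a symmetric unitary square root of $S_L$ with spectrum in $\mathcal{C}$. That reduction tacitly uses the converse: a symmetric unitary $\Phi$ with $\Phi^2=S_L$ must map $\R^d$ onto $L$; equivalently, the assignment $L\mapsto S_L$ is injective on $U(d)/O(d)$. This is true but needs a proof. One clean way: set $K:=U^{-1}\Phi$. Using $\Phi^T=\Phi$ and $\Phi^2=S_L=UU^T$, compute
\[
K^T K \;=\; \Phi\,(U^T)^{-1}U^{-1}\,\Phi \;=\; \Phi\,S_L^{-1}\,\Phi \;=\; \Phi\,\Phi^{-2}\,\Phi \;=\; I,
\]
so $K$ is complex orthogonal; being also unitary, $K^T=K^{-1}=K^*$ forces $K=\overline{K}$, i.e. $K\in O(d)$, and therefore $\Phi(\R^d)=UK(\R^d)=U(\R^d)=L$. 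With this lemma inserted, the proof is complete.
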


{In the sequel, we apply Proposition \ref{prop JP} to each symplectic vector space $(T_x\M,\omega_x)$, endowed with the almost complex structure $J$. The fixed Lagrangian subspace $H$ in each $T_x\M$ is $JV(x)$.}

\begin{defi}
	Let $L\in\Lambda_x$. The angles of $L$ with respect to $JV(x)$ is the equivalence class\[
	(\theta_1^{JV(x),L},\dots,\theta_d^{JV(x),L} )/\sim\, ,
	\]
	where \begin{itemize}
		\item $(\theta_1^{JV(x),L},\dots,\theta_d^{JV(x),L} )\in ]-\frac{\pi}{2},\frac{\pi}{2}]^d$ is the $d$-uplet composed by arguments of the $d$ eigenvalues given by Proposition \ref{prop JP} { applied at $T_x\M$ with respect to $JV(x)$ and $L$};
		\item $\sim$ is the equivalence relation obtained from permutations over the $d$-entries. 
	\end{itemize} 
\end{defi}

{Let us denote by}
$$\{ e_1(x),\dots,e_d(x) \}\, $$ 
{ a unitary basis of $T_x\cal M$ that is contained in $JV(x)$ and given by Proposition \ref{prop JP}.}
{We have} $$\C e_1(x)\oplus\dots\oplus\C e_d(x)=T_x\M\, ,$$
where $T_x\M$ is seen as a complex vector space. 
In particular, $$\{ e_1(x),\dots,e_d(x),J_xe_1(x),\dots,J_xe_d(x) \}$$ is a symplectic basis of $T_x\M$, seen as a real vector space of dimension $2d$. 

{Observe that for every $x\in\M$ and every $v\in T_x\M$ {it holds $J_xv=iv=e^{i\frac{\pi}{2}}v$}. Refering then to notations introduced in Subsection \ref{def angular MI subsection}, the image $\cR_x(JV(x))$ is the equivalence class of {$J_x\in \cU_x(\M)$}. 
Thus, for $L\in\Lambda_x$ we have that $\cR_x(L)$ is the equivalence class of the unitary transformation $\Phi_{JV(x),L}\circ J_x$. Consequently, since $\Delta_x=\bar\delta_x\circ\cR_x$, it holds
\begin{equation}\label{Delta and angles}
\Delta_x(L)=
\left(\mathrm{det}\left(\Phi_{JV(x),L}\circ J_x\right)\right)^2= \exp\Big(2i\sum_{j=1}^d\theta_j^{JV(x),L}\Big)\exp(i\,d\pi)\, .
\end{equation}}

Let $\Gamma:[a,b]\to \Lambda$ be a continuous map. 
\begin{nota}
To ease the notation, for every $t\in[a,b]$, we denote the angles of $\Gamma(t)$ (with respect to $JV(p\circ\Gamma(t))$) as
\[
\left( \theta_1^{JV,\Gamma}(t),\dots,\theta_d^{JV,\Gamma}(t) \right)/\sim\, .
\]
\end{nota}
The angular Maslov index $\alpha\mathrm{MI}(\Gamma)$ differs by an integer from the angular quantity
\begin{equation}\label{relation AMI and angles}
\dfrac{1}{\pi} \left(\sum_{j=1}^d\left( \theta_j^{JV,\Gamma}(b)-\theta_j^{JV,\Gamma}(a)\right)\right)\, ,
\end{equation}
{
since the angular Maslov index is a continuous lift of the function $\Delta$ and because of Equation \ref{Delta and angles}. We will see in the next paragraph that this integer is actually $\mathrm{MI}(\Gamma)$.}
~\newline


%
%

\begin{remk}
	We have that $\mathrm{dim}(L\cap V(x))=k$, for some $0\leq k\leq n$, if and only if exactly $k$ angles of $L$ with respect to $JV(x)$ are equal to $\frac \pi 2$.
\end{remk}

\subsection{Relation between Maslov index and angular Maslov index}

The following proposition clarifies the relation between Maslov index and angular Maslov index.

\begin{proposition}\label{lemma relation MI et AMI}
	Let $\Gamma:[a,b]\to \Lambda$ be a smooth path such that
	$$
	\Gamma(a)\cap V(p\circ\Gamma(a))=\Gamma(b)\cap V(p\circ\Gamma(b))=\{0\}\, .
	$$
	 Then
	 \begin{equation}\label{eq of prop ami mi}
	 \alpha\mathrm{MI}(\Gamma)=\frac{1}{\pi}\left( \sum_{j=1}^d(\theta_j^{JV,\Gamma}(b)-\theta_j^{JV,\Gamma}(a)) \right)+\mathrm{MI}(\Gamma)\, .
	 \end{equation}
\end{proposition}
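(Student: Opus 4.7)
The plan is to identify the integer $\alpha\mathrm{MI}(\Gamma)-\tfrac{1}{\pi}\sum_j\left(\theta_j^{JV,\Gamma}(b)-\theta_j^{JV,\Gamma}(a)\right)$, whose integrality was already noted just after Equation~\eqref{relation AMI and angles}, with $\mathrm{MI}(\Gamma)$. First I would reduce to the case where $\Gamma$ is in D-general position with respect to $\Sigma$, so that its crossings with $\Sigma$ occur at finitely many isolated times in the regular locus $\Sigma_1$: the angular Maslov index and the angle sum both depend continuously on small $C^1$-perturbations of $\Gamma$ (keeping the endpoints outside $\Sigma$), and $\mathrm{MI}$ is locally constant on general-position arcs, so agreement on a dense subset forces agreement everywhere. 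Then I would split $[a,b]$ at times $a=s_0<s_1<\dots<s_N=b$ with each $s_k\notin \Sigma$ and at most one crossing in each $[s_k,s_{k+1}]$; both $\alpha\mathrm{MI}$ and $\mathrm{MI}$ are additive under such concatenations, and the telescoping sum of the endpoint angle sums equals the total one, so it suffices to verify~\eqref{eq of prop ami mi} on each sub-interval.

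On a sub-interval without crossing, the angles $\theta_j^{JV,\Gamma}(t)$ can be labeled continuously in $t$, $\mathrm{MI}$ is $0$, and Equation~\eqref{relation AMI and angles} applied to this continuous labeling gives the identity directly (the integer ambiguity has to vanish by continuity). On a sub-interval containing a single regular crossing at some $t_0$, exactly one angle, say $\theta_{j_0}$, tends to $\pi/2$ while the other $d-1$ angles remain in a compact subset of $(-\pi/2,\pi/2)$. Choosing a continuous lift $\tilde\theta_{j_0}$ of this angle through the crossing, the conventional angle $\theta_{j_0}\in(-\pi/2,\pi/2]$ jumps by $-\pi$ when $\tilde\theta_{j_0}$ crosses $\pi/2$ from below and by $+\pi$ when it crosses from above; a direct computation then yields $\alpha\mathrm{MI}(\Gamma|_{[s_k,s_{k+1}]})-\tfrac{1}{\pi}\sum_j(\theta_j^{JV,\Gamma}(s_{k+1})-\theta_j^{JV,\Gamma}(s_k))=\pm1$ according to the direction of the crossing.

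The main obstacle, and the only part requiring explicit computation, is matching this $\pm1$ with the paper's sign convention for $\mathrm{MI}(\Gamma)$. For this, by Proposition~\ref{Pcoorientation} the coorientation of $\Sigma_1$ at the crossing can be computed using any auxiliary Lagrangian transverse to the vertical and to $\Gamma(t_0)$, so I would take $K:=JV$. Using the orthogonal decomposition $T_{p\circ\Gamma(t_0)}{\cal M}=\bigoplus_j \mathbb{C}\,e_j$ provided by the eigenbasis $(e_1,\dots,e_d)$ of $\Phi_{JV,\Gamma(t_0)}$ from Proposition~\ref{prop JP}, the height quadratic form $\mathcal{Q}_{JV}(V,L)$ splits as an orthogonal sum of $2$-dimensional contributions, each of which in the $j$-th factor is proportional to $-\sin\theta_j\cos\theta_j$ (as can be checked in the model chart $T^*\mathbb{R}$ with its standard $J$). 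Only the $j_0$-th component changes signature at the crossing: its index drops from $1$ to $0$ exactly when $\tilde\theta_{j_0}$ increases through $\pi/2$, which is the paper's convention for a positive crossing (the index of $\mathcal{Q}_{K}(V,L)$ decreases by $1$ as $t$ increases). This identifies the signed count of angular crossings with $\mathrm{MI}(\Gamma)$, and summing over all sub-intervals yields the proposition.
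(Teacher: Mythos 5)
Your approach is essentially the same as the paper's: reduce to general position with respect to $\Sigma$, localize around a single crossing, compute the angular contribution via the jump of the conventional angle $\theta_{j_0}$ by $-\pi$ as its lift passes $\pi/2$, and match the sign by computing the height form $\mathcal{Q}_{JV}(V,\Gamma(t))$ in the unitary eigenbasis of Proposition~\ref{prop JP}. The diagonal entries you exhibit (proportional to $-\sin\theta_j\cos\theta_j$) agree, up to the expected sign flip and a positive rescaling, with the paper's computation of $\mathcal{Q}_{JV}(\tilde\Gamma(s),V)(E_j,E_j)=\tfrac12\sin(2\theta_j^{JV,\tilde\Gamma}(s))$, and your identification of the positive-crossing direction with $\tilde\theta_{j_0}$ increasing through $\pi/2$ is correct under the paper's coorientation convention.

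There is, however, a genuine gap in the step where you declare ``so I would take $K:=JV$.'' Proposition~\ref{Pcoorientation} only allows an auxiliary Lagrangian $K$ that is transverse to \emph{both} $V$ and $L_0=\Gamma(t_0)$, and the height form $\mathcal{Q}_{JV}(V,\Gamma(t))$ is only defined when $\Gamma(t)$ is transverse to $JV$. At the crossing time $t_0$ one has $\theta_{j_0}^{JV,\Gamma}(t_0)=\pi/2$, but nothing prevents another angle $\theta_j^{JV,\Gamma}(t_0)$ from equalling $0$, i.e. $\Gamma(t_0)\cap JV(p\circ\Gamma(t_0))\neq\{0\}$. In that case $JV$ is not an admissible auxiliary Lagrangian at $t_0$, the eigenvector $P^{JV}(e^{i\theta_j}v_j)$ you use degenerates, and the orthogonal splitting of $\mathcal{Q}_{JV}(V,\cdot)$ you invoke breaks down. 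Being in $D$-general position with respect to $\Sigma$ does not automatically prevent this coincidence, because the singular cycle for $V$ and the singular cycle for $JV$ are distinct codimension-one sets that can meet. The paper handles exactly this by an extra perturbation (its case $(2)$, producing $\tilde\Gamma$ with $\tilde\Gamma(s)\cap JV(p\circ\tilde\Gamma(s))=\{0\}$ for all $s$ near $t_0$) before performing the quadratic-form computation. You should add this perturbation step, or equivalently first put $\Gamma$ in $D$-general position simultaneously with respect to $\Sigma$ and to $\{L:L\cap JV(p(L))\neq\{0\}\}$ and then perturb further so that the crossing times with the two cycles are disjoint.
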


\begin{proof}{
Without loss of generality, assume that the path $\Gamma$ is in general position with respect to $\Sigma(\M)=\{L\in\Lambda(\M) :\ L\cap V(p(L))\neq \{0\}\}$. Let $t\in]a,b[$ be a crossing. Since $\Gamma$ is in general position, $\Gamma(t)$ has exactly only one angle equal to $\frac{\pi}{2}$ with respect to $JV$. {Up to a permutation over angles, we can assume that $\theta_1^{JV,\Gamma}(t)=\frac{\pi}{2}$.}

Let $\epsilon>0$ be small enough such that
\begin{itemize}
\item for $s\in[t-\epsilon,t+\epsilon]\setminus\{t\}$ it holds $\Gamma(s)\cap V(p\circ\Gamma(s))=\{0\}$;
\item for $s\in[t-\epsilon,t+\epsilon]$ it holds that, for all $j>1$, $$|\theta^{JV,\Gamma}_1(s)|>|\theta_j^{JV,\Gamma}(s)|\, .$$ 
\end{itemize}
It will be sufficient to show that Equation \ref{eq of prop ami mi} holds for the subpath $\Gamma_{|[t-\epsilon,t+\epsilon]}$.

Let us start by calculating the angular Maslov index of $\Gamma_{|[t-\epsilon,t+\epsilon]}$:
	\begin{equation*}
	\alpha\text{MI}(\Gamma_{|[ t-\epsilon,t+\epsilon]})=
	\end{equation*}
	\begin{equation*}
	\dfrac{\theta_1^{JV,\Gamma}(t+\epsilon)-\theta_1^{JV,\Gamma}(t-\epsilon)}{\pi}+\dfrac{1}{\pi}\Big(\sum_{j=2}^d\theta_j^{JV,\Gamma}(t+\epsilon)-\theta_j^{JV,\Gamma}(t-\epsilon\Big)+k,
	\end{equation*}
	where \[
	k=\begin{cases}
	+1\quad\text{if }-\dfrac{\pi}{2}<\theta_1^{JV,\Gamma}( t+\epsilon)<0<\theta_1^{JV,\Gamma}( t-\epsilon)<\dfrac{\pi}{2},\\
	\\
	-1\quad\text{if }-\dfrac{\pi}{2}<\theta_1^{JV,\Gamma}( t-\epsilon)<0<\theta_1^{JV,\Gamma}( t+\epsilon)<\dfrac{\pi}{2}.
	\end{cases}
	\]}
	
{
 	Let us now calculate $\mathrm{MI}(\Gamma_{|[t-\epsilon,t+\epsilon]})$. We can smoothly perturb the path $\Gamma|_{[t-\epsilon,t+\epsilon]}$ into a Lagrangian path $\tilde\Gamma:{[ t-\varepsilon,t+\varepsilon] }\to \Lambda(\M)$  
such that
	\begin{itemize}
	\item[$(i)$] $\mathrm{MI}(\Gamma_{|[t-\epsilon, t+\epsilon]})=\mathrm{MI}(\tilde\Gamma)$;\vspace{4pt}
		\item[$(ii)$] $\tilde\Gamma$ is in general position with respect to $\Sigma$, $\tilde\Gamma$ has a crossing at $0$ with $\Sigma$ and $\tilde\Gamma(s)\cap V(p\circ\tilde\Gamma(s))=\{0\}$ for ${ s\in[ t-\varepsilon,t+\varepsilon]\backslash \{ t\} }$ 
		; \vspace{4pt}
		
		\item[$(iii)$] $\tilde\Gamma$ is in general position with respect to $\{L\in\Lambda(\M) :\ L\cap JV(p(L))\neq \{0\}\}$ and { $\tilde\Gamma(t)\cap JV(p\circ\tilde\Gamma(t))=\{0\}$} 
		.
		
		
		
	\end{itemize}
	
Conditions $(i)$ and $(ii)$ can be obtained easily, see Section \ref{section def MI}, and they are stable under small perturbations. Moreover, since being in general position is a dense and open condition, we can assume, up to perturb $\Gamma$, that the initial path is also in general position with respect to $\{L\in\Lambda(\M) :\ L\cap JV(p(L))\neq \{0\}\}$. 

To obtain $\tilde\Gamma$, we need to perturb $\Gamma_{|[t-\epsilon,t+\epsilon]}$ so that the new path $\tilde\Gamma$ does not intersect the horizontal $JV$ at { time $t$} .

Two cases can happen.\vspace{4pt}
	
	\noindent $(1)$ $\Gamma( t)\cap JV(p\circ\Gamma(t))=\{0\}$. Then we conclude by defining { $\tilde\Gamma=\Gamma$} 
	.\vspace{4pt}

	\noindent $(2)$ $\Gamma(t)\cap JV(p\circ\Gamma(t))\neq \{0\}$. In this case, because of the general position assumption, the subspace $\Gamma(t)\cap JV(p\circ\Gamma(t))$ is 1-dimensional, generated by one vector $w$. Let $0<\theta \ll 1$, complete $w$ to a {unitary} basis and consider the unitary transformation $R$ that rotates by $e^{i\theta}$ the vector $w$ and that is the identity on the other vectors {of the basis}. Up to select $\theta$ small enough, the Lagrangian path {$\tilde\Gamma := R\circ\Gamma $  } 
	is a small perturbation of $\Gamma_{|[t-\epsilon,t+\epsilon]}$. Up to select a subpath of $\tilde\Gamma$, the defined path satisfies all the required conditions. 
	

To calculate the Maslov index $\mathrm{MI}(\Gamma|_{[t-\epsilon,t+\epsilon]})$, we calculate then $\mathrm{MI}(\tilde\Gamma)$, because of condition $(i)$.
In particular, up to select a subpath, we can assume that
\[
\tilde\Gamma:{ [t-\epsilon,t+\epsilon]} \to\Lambda(\M)
\]
is in general position with respect to $\Sigma$, it has a unique crossing with the vertical at {$s=t$}, for all $j>1$ and all {
$s\in[t-\epsilon,t+\epsilon]$} 
it holds $$|\theta_1^{JV,\tilde\Gamma}(s)|>|\theta_j^{JV,\tilde\Gamma}(s)|$$ and $\tilde\Gamma(s)\cap JV(p\circ\tilde\Gamma(s))=\{0\}$ for all {
$s\in[t-\epsilon,t+\epsilon]$} 
.

For every {
$s\in[t-\epsilon,t+\epsilon]$} 
, by Proposition \ref{prop JP}, we have a unitary basis { of $T_{p(\tilde\Gamma(s))}\M$ whose vectors are} in $JV(p\circ\tilde\Gamma(s))$
$$
\{v_1(s),v_2(s),\dots,v_d(s)\}
$$
made up of eigenvectors relative to the eigenvalues $e^{i\theta_j^{JV,\tilde\Gamma}(s)}$, $j=1,\dots,d$ {such that $(e^{i\theta_j^{JV,\tilde\Gamma}(s)}v_j)$, which is also a unitary basis of $T_{p(\tilde\Gamma(s))}\M$, is a basis of $\tilde \Gamma(s)$ over $\R$}. 

We want then to consider the variation of the index of the quadratic form $$Q_{JV(p\circ\tilde\Gamma(s)}(V(p\circ\tilde\Gamma(s)),\tilde\Gamma(s))\, .$$
Up to a sign change, we can work with the quadratic form
$$
{Q=} Q_{JV(p\circ\tilde\Gamma(s)}(\tilde\Gamma(s),V(p\circ\tilde\Gamma(s))\, .
$$
In the sequel, we denote by $Q$ both the quadratic form and the associated bilinear form.
We consider the basis $${(E_j)_{1\leq j\leq d} =}(P^{JV(p\circ\tilde\Gamma(s))}(e^{i\theta_j^{JV,\tilde\Gamma}(s)}v_j(s)))_{1\leq j\leq d}$$
of $T_{p\circ\tilde\Gamma(s)}\M/JV(p\circ\tilde\Gamma(s))$. {
Then we have for all $j, k$
\[\begin{split}Q\big(E_j&, E_k\big)\\
&=\frac{1}{2}\Big( \omega( e^{i\theta_j^{JV,\tilde\Gamma}(s)}v_j(s), i\sin(\theta_k^{JV,\tilde\Gamma}(s))v_k)+ \omega( e^{i\theta_k^{JV,\tilde\Gamma}(s)}v_k(s), i\sin(\theta_j^{JV,\tilde\Gamma}(s))v_j) \Big)
\end{split}
\]
We deduce that $(E_j)_{1\leq j\leq d}$ is orthogonal for $Q$ and that for all $j\in\{ 1,\dots, d\}$ 
\[\begin{split}
{Q(E_j,E_j)}&=\omega( e^{i\theta_j^{JV,\tilde\Gamma}(s)}v_j(s), i\sin(\theta_j^{JV,\tilde\Gamma}(s))v_j)\\
&=\omega (\cos(\theta_j^{JV,\tilde\Gamma}(s))v_j(s)+\sin(\theta_j^{JV,\tilde\Gamma}(s)Jv_j(s),\sin(\theta_j^{JV,\tilde\Gamma}(s))Jv_j(s)))\\
&= \cos(\theta_j^{JV,\tilde\Gamma}(s))\sin(\theta_j^{JV,\tilde\Gamma}(s))=\frac{1}{2}\sin\big(  2\theta_j^{JV,\tilde\Gamma}(s)    \big)
\end{split}\]}





We can thus conclude that
	\begin{equation*}
	\mathrm{MI}(\Gamma|_{[t-\epsilon,t+\epsilon]})=\mathrm{MI}(\tilde\Gamma)=\begin{cases}
	+1\quad\text{if }-\frac \pi 2 <\theta_1^{JV,\tilde\Gamma}({ t+\varepsilon})<0<\theta_1^{JV,\tilde\Gamma}( { t-\varepsilon})<\frac\pi 2\, ,\\
	\\
	-1\quad\text{if }-\frac \pi 2 <\theta_1^{JV,\tilde\Gamma}( { t-\varepsilon})<0<\theta_1^{JV,\tilde\Gamma}({ t+\varepsilon})<\frac\pi 2\, .
	\end{cases}
	\end{equation*}

}\end{proof}

From Proposition \ref{lemma relation MI et AMI} we immediately obtain the following results.
\begin{cor}\label{comparison DAMI et DMI}
	Let $\M$ be a $2d$-dimensional symplectic manifold that admits a Lagrangian foliation. Let $(\phi_t)
	$ be an isotopy of conformally symplectic diffeomorphisms of $\M$. For every $L\in\Lambda(\M)$ and $t>0$ it holds
	\begin{equation}
	\vert\text{D}\alpha\text{MI}(L,(\phi_s)_{s\in[0,t]})-\text{DMI}(L,(\phi_s)_{s\in[0,t]})\vert <  d\, .
	\end{equation}
	In particular, whenever the asymptotic angular Maslov index exists at $x\in\M$, it does not depend on the chosen Lagrangian subspace and it holds
	\[
	\mathrm{D}\alpha\mathrm{MI}_{\infty}(x,(\phi_t))=\mathrm{DMI}_\infty(x,(\phi_t))\, .
	\]
\end{cor}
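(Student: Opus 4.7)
The plan is to derive Corollary \ref{comparison DAMI et DMI} essentially as a book-keeping consequence of Proposition \ref{lemma relation MI et AMI}. Fix $L\in\Lambda(\M)$ and $t>0$ for which the dynamical Maslov index $\mathrm{DMI}(L,(\phi_s)_{s\in[0,t]})$ is defined, i.e.\ such that both $L$ and $D\phi_tL$ are transverse to the vertical. Then $\Gamma:[0,t]\to\Lambda(\M)$, $\Gamma(s):=D\phi_sL$, is a smooth Lagrangian path with endpoints outside $\Sigma(\M)$, and Proposition \ref{lemma relation MI et AMI} applies directly, giving
\[
\mathrm{D}\alpha\mathrm{MI}(L,(\phi_s)_{s\in[0,t]})-\mathrm{DMI}(L,(\phi_s)_{s\in[0,t]})=\frac{1}{\pi}\sum_{j=1}^{d}\bigl(\theta_j^{JV,\Gamma}(t)-\theta_j^{JV,\Gamma}(0)\bigr).
\]

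The key bound is obtained from the characterisation of transversality in terms of the angles: a Lagrangian subspace $L'\in\Lambda_y$ is transverse to $V(y)$ if and only if none of its angles is equal to $\pi/2$. Since both endpoints $\Gamma(0)=L$ and $\Gamma(t)=D\phi_tL$ are transverse to the vertical, all the angles $\theta_j^{JV,\Gamma}(0)$ and $\theta_j^{JV,\Gamma}(t)$ lie in the open interval $\left(-\tfrac{\pi}{2},\tfrac{\pi}{2}\right)$. Therefore each individual difference $\theta_j^{JV,\Gamma}(t)-\theta_j^{JV,\Gamma}(0)$ has absolute value strictly less than $\pi$, and summing over $j=1,\dots,d$ and dividing by $\pi$ yields the wanted strict inequality
\[
\bigl|\mathrm{D}\alpha\mathrm{MI}(L,(\phi_s)_{s\in[0,t]})-\mathrm{DMI}(L,(\phi_s)_{s\in[0,t]})\bigr|<d.
\]

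For the asymptotic statement, divide the preceding inequality by $t$; since the difference is uniformly bounded by the constant $d$, the ratios $\mathrm{D}\alpha\mathrm{MI}(L,(\phi_s)_{s\in[0,t]})/t$ and $\mathrm{DMI}(L,(\phi_s)_{s\in[0,t]})/t$ have the same asymptotic behaviour as $t\to+\infty$: one converges if and only if the other does, and to the same limit. The independence from the choice of $L\in\Lambda_x$ is then inherited from the analogous statement for the angular Maslov index given by Proposition \ref{indep lagrangian sub}, which is itself proved by dividing the $8d$-bound of that proposition by $t$. Combining these two observations delivers the stated equality $\mathrm{D}\alpha\mathrm{MI}_\infty(x,(\phi_t))=\mathrm{DMI}_\infty(x,(\phi_t))$.

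There is essentially no hard step here: the genuine work is encoded in Proposition \ref{lemma relation MI et AMI} (where crossings are analysed via a local perturbation that sends the path off the horizontal foliation $JV$) and in Proposition \ref{indep lagrangian sub}. The only mild point to check is that one may freely sum over the $d$ angles even though they are defined only up to permutation; this is harmless because the symmetric function $\sum_j\theta_j^{JV,\Gamma}(s)$ (using the distinguished representatives in $(-\pi/2,\pi/2]$) is well-defined, and the discontinuities caused by angles crossing $\pi/2$ are exactly what the integer $\mathrm{MI}(\Gamma)$ records in the formula of Proposition \ref{lemma relation MI et AMI}.
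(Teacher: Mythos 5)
Your proposal is correct and takes exactly the route the paper has in mind: the paper presents this corollary as immediately following from Proposition \ref{lemma relation MI et AMI} and Proposition \ref{indep lagrangian sub}, and you supply precisely the missing book-keeping, namely that transversality of the endpoints to the vertical forces all angles into the open interval $\left(-\tfrac{\pi}{2},\tfrac{\pi}{2}\right)$, so each angle difference is strictly bounded by $\pi$ and the sum by $d\pi$. Your remark that the symmetric sum of angles is well-defined despite the permutation ambiguity is a welcome clarification, and the passage to the asymptotic statement by dividing the uniform bound by $t$ and invoking Proposition \ref{indep lagrangian sub} is exactly the intended argument.
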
 

\subsection{Independence of the aymptotic Maslov index from the isotopy}

The index $\mathrm{D}\alpha\mathrm{MI}$ does not depend on the chosen conformally symplectic isotopy.

\begin{proposition}\label{independence isotopy}
	Let $\phi$ be a conformally symplectic diffeomorphism isotopic to the identity on $\M$. Let $(\phi_t)_{t\in[0,1]}, (\psi_t)_{t\in[0,1]}$ be isotopies of conformally symplectic diffeomorphisms such that $\phi_0=\psi_0=\mathrm{Id}_{T^*M}$ and $\phi_1=\psi_1=\phi$.
	Then for every $L\in\Lambda$
	$$
	\mathrm{D}\alpha\mathrm{MI}(L, (\phi_t)_{t\in[0,1]})=\mathrm{D}\alpha\mathrm{MI}(L, (\psi_t)_{t\in[0,1]})\, .
	$$
	Extend then each isotopy on $[0,+\infty)$ by asking that $\phi_{1+t}=\phi_t\circ\phi$ and $\psi_{1+t}=\psi_t\circ\phi$. Thus,
 whenever the limit exists, the asymptotic angular Maslov index does not depend on the chosen isotopy, i.e. 
	$$
	\mathrm{D}\alpha\mathrm{MI}_\infty(p(L),\phi):=\mathrm{D}\alpha\mathrm{MI}_\infty(p(L), (\phi_t)
	)=\mathrm{D}\alpha\mathrm{MI}_\infty(p(L), (\psi_t)
	)\,.
	$$
	
\end{proposition}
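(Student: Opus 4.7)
The plan is to deduce the first assertion from Proposition~\ref{Indisotopybis} (on the integer Maslov index) via Proposition~\ref{lemma relation MI et AMI} (linking integer and angular indices), first under a transversality assumption and then in general by density; the asymptotic assertion then follows from additivity of $\alpha\mathrm{MI}$ under concatenation together with the cocycle property.

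First, I would assume that both $L$ and $D\phi(L)$ are transverse to the vertical foliation. The two smooth paths
$$
\Gamma_\phi(t):=D\phi_t(L),\qquad \Gamma_\psi(t):=D\psi_t(L),\qquad t\in[0,1],
$$
share the endpoints $L$ and $D\phi(L)$. By Proposition~\ref{lemma relation MI et AMI}, each angular Maslov index decomposes as the same boundary-angle contribution $\frac{1}{\pi}\sum_{j=1}^{d}\bigl(\theta_j^{JV}(D\phi(L))-\theta_j^{JV}(L)\bigr)$ plus the associated integer Maslov index, and Proposition~\ref{Indisotopybis} gives $\mathrm{DMI}(L,(\phi_t)_{t\in[0,1]})=\mathrm{DMI}(L,(\psi_t)_{t\in[0,1]})$. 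Hence $\mathrm{D}\alpha\mathrm{MI}(L,(\phi_t))=\mathrm{D}\alpha\mathrm{MI}(L,(\psi_t))$ in this case.

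For a general $L$, I would approximate it by a sequence $L_n\to L$ in $\Lambda_{p(L)}$ such that both $L_n$ and $D\phi(L_n)$ are transverse to the vertical; this is possible since each of the two transversality conditions cuts out an open dense subset of $\Lambda_{p(L)}$, so their intersection is dense. Joint continuity of $(t,L')\mapsto D\phi_t(L')$ yields $C^0$-convergence of $\Gamma_{\phi,L_n}$ to $\Gamma_{\phi,L}$ and similarly for $\psi$. Since $\alpha\mathrm{MI}(\Gamma)$ is the normalized variation of a continuous lift of $\Delta\circ\Gamma\colon[0,1]\to U(1)$, it depends continuously on $\Gamma$ in the $C^0$ topology, and the equality from the transverse case passes to the limit. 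For the asymptotic statement, the cocycle hypothesis $\phi_{1+t}=\phi_t\circ\phi$ gives inductively $\phi_{k+u}=\phi_u\circ\phi^k$ for $k\in\N$ and $u\in[0,1]$, whence
$$
(D\phi_sL)_{s\in[k,k+1]}=(D\phi_u(D\phi^kL))_{u\in[0,1]},
$$
and similarly for $\psi$ (noting $\phi^k=\psi^k$ since $\phi_1=\psi_1=\phi$). By additivity of $\alpha\mathrm{MI}$ under concatenation and the first assertion applied to $D\phi^kL$ in place of $L$, the partial sums
$$
\alpha\mathrm{MI}\bigl((D\phi_sL)_{s\in[0,n]}\bigr)=\sum_{k=0}^{n-1}\alpha\mathrm{MI}\bigl((D\phi_uD\phi^kL)_{u\in[0,1]}\bigr)
$$
coincide for the two isotopies. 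Dividing by $n$ and letting $n\to\infty$ yields equality of the asymptotic angular Maslov indices.

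The main obstacle is the passage to non-transverse $L$ in the first assertion: Proposition~\ref{Indisotopybis} does not apply directly there, and the clean splitting of Proposition~\ref{lemma relation MI et AMI} requires transverse endpoints. The density argument circumvents this, but hinges on continuity of $\alpha\mathrm{MI}$ with respect to $C^0$-convergence of paths — a property the integer Maslov index itself fails at non-transverse endpoints, which is precisely why the angular refinement is the right tool here.
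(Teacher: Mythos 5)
Your argument is circular with respect to the paper's logical structure. You invoke Proposition~\ref{Indisotopybis} (equality of the integer Maslov indices $\mathrm{DMI}(L,(\phi_{t}))=\mathrm{DMI}(L,(\psi_t))$ for $L$ and $D\phi(L)$ transverse to the vertical) as the key input, but in the paper Proposition~\ref{Indisotopybis} is \emph{deduced from} Proposition~\ref{independence isotopy} together with Proposition~\ref{lemma relation MI et AMI}; its stand-alone proof is never given, and it is not an elementary fact (there is no a~priori homotopical reason why the loop obtained by going forward along $(\phi_t)$ and backward along $(\psi_t)$ should have zero Maslov index). Supplying an independent proof of \ref{Indisotopybis} for all transverse $L$ would amount to redoing the hard work, so as written your first assertion is assumed rather than proved.

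What the paper actually does is weaker where you need it to be strong and stronger where you can afford to be weak. From Equation~\eqref{relation AMI and angles}, since $\phi_1=\psi_1$ the boundary-angle term is the same for both isotopies, so the difference $\mathrm{D}\alpha\mathrm{MI}(L,(\phi_t))-\mathrm{D}\alpha\mathrm{MI}(L,(\psi_t))$ takes values in $2\Z$ for \emph{every} $L$ (no transversality needed). This difference is a continuous function of $L$ on the connected manifold $\Lambda$, hence constant. To pin the constant to zero one does not need integer-Maslov equality for all transverse $L$; one needs it for a \emph{single} $L$, and Theorem~\ref{Tppal} supplies precisely such an $L$: taking a Lagrangian graph $\cl$, the closed 1-form $\eta$ and graph selector $u$ depend only on $\phi_1(\cl)=\psi_1(\cl)$, so the same point $\bar x\in\phi^{-1}(\mathrm{graph}((\eta+du)_{|U}))$ gives $\mathrm{DMI}(T_{\bar x}\cl,(\phi_t)_{[0,1]})=\mathrm{DMI}(T_{\bar x}\cl,(\psi_t)_{[0,1]})=0$, and then Proposition~\ref{lemma relation MI et AMI} converts this to angular equality at $T_{\bar x}\cl$. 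Your density-and-continuity step for non-transverse $L$ is sound but plays a smaller role than you assign it: once the difference is known to be $2\Z$-valued and continuous, constancy is automatic and one does not need to approximate a general $L$ by transverse ones. Your treatment of the asymptotic statement (cocycle property, additivity of $\alpha\mathrm{MI}$ under concatenation, divide by $n$) is correct and matches the intent of the paper.
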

\begin{proof}
Since $\phi_1=\psi_1=\phi$ and from \eqref{relation AMI and angles}, for every $L\in\Lambda$ it holds
	\[
	\text{D}\alpha\text{MI}(L,(\phi_t)_{t\in[0,1]})=\text{D}\alpha\text{MI}(L,(\psi_t)_{t\in[0,1]})+ 2k_L,
	\]
	for some $k_L\in\Z$. The function
	$$ L\mapsto \mathrm{D}\alpha\mathrm{MI}(L,(\phi_t)_{t\in[0,1]})-\mathrm{D}\alpha\mathrm{MI}(L,(\psi_t)_{t\in[0,1]})
	$$
	is continuous. Therefore, the constant $k=k_L\in\Z$ does not depend on $L\in\Lambda$. To conclude, it is sufficient to find $L\in\Lambda$ such that \begin{equation}\label{goal eq}
	\text{D}\alpha\text{MI}(L,(\phi_t)_{t\in[0,1]})=\text{D}\alpha\text{MI}(L,(\psi_t)_{t\in[0,1]})	\,.\end{equation}
	Consider then a Lagrangian graph $\cl\subset\M$. By Theorem \ref{Tppal}, with $\eta,U$ and $u$ defined as in the statement of Theorem \ref{Tppal}, for every $x\in\phi^{-1}(\text{graph}((\eta+du)_{|U}))$ it holds
	\begin{equation}\label{DMI on graph sel}
	\mathrm{DMI}(T_x\cl,(\phi_t)_{t\in[0,1]})=\mathrm{DMI}(T_x\cl,(\psi_t)_{t\in[0,1]})=0.
	\end{equation}
	Let then $\bar x$ be a point in $ \phi^{-1}(\text{graph}((\eta+du)_{|U}))\subset \cl$. From Proposition \ref{lemma relation MI et AMI} and from \eqref{DMI on graph sel}, it holds
	\[
	\text{D}\alpha\text{MI}(T_{\bar x}\cl,(\phi_t)_{t\in[0,1]})-\text{D}\alpha\text{MI}(T_{\bar x}\cl,(\psi_t)_{t\in[0,1]})=\]\[\dfrac{1}{\pi}\Big(\sum_{j=1}^d\theta_j^{JV,D\phi_t(T_{\bar x}\cl)}(1)-\theta_j^{JV,D\phi_t(T_{\bar x}\cl)}(0)\Big)-\dfrac{1}{\pi}\Big(\sum_{j=1}^d\theta_j^{JV,D\psi_t(T_{\bar x}\cl)}(1)-\theta_j^{JV,D\psi_t(T_{\bar x}\cl)}(0)\Big)\, .
	\]
	Since $D\phi_1(T_{\bar x}\cl)=D\psi_1(T_{\bar x}\cl)=D\phi(T_{\bar x}\cl)$, the second term of the last equality is zero, as required.

\end{proof}
{
We may now deduce the
\begin{proof}[Proof of Proposition \ref{Indisotopybis}]
Since the difference between the angular Maslov index and the Maslov index in Proposition \ref{lemma relation MI et AMI} only depends on $\Gamma(b)$ and $\Gamma(a)$, the results of Proposition \ref{independence isotopy} also hold for Maslov index.\end{proof}}
From Corollary \ref{comparison DAMI et DMI} and Proposition \ref{indep lagrangian sub}, we deduce the following result.
\begin{cor}\label{no subspace}
	Let $(\phi_t)
	$ be an isotopy of conformally symplectic diffeomorphisms of $\M$. For every $x\in \M$ the asymptotic Maslov index, whenever it exists, does not depend on the chosen Lagrangian subspace $L\in\Lambda_x$.
\end{cor}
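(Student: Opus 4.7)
The plan is to deduce this directly by combining the two cited results via the triangle inequality, with no further work beyond an $O(1/t)$ estimate. Fix $x \in \M$ and two Lagrangian subspaces $L_1, L_2 \in \Lambda_x$.

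First, I would invoke Corollary \ref{comparison DAMI et DMI} twice, once for $L_1$ and once for $L_2$, which gives the uniform-in-$t$ bounds
\[
|\mathrm{DMI}(L_i, (\phi_s)_{s\in[0,t]}) - \mathrm{D}\alpha\mathrm{MI}(L_i, (\phi_s)_{s\in[0,t]})| < d, \qquad i=1,2.
\]
Next, I would apply Proposition \ref{indep lagrangian sub} to the pair $(L_1, L_2)$, yielding
\[
|\mathrm{D}\alpha\mathrm{MI}(L_1, (\phi_s)_{s\in[0,t]}) - \mathrm{D}\alpha\mathrm{MI}(L_2, (\phi_s)_{s\in[0,t]})| < 8d.
\]
A triangle inequality combining these three estimates then gives
\[
|\mathrm{DMI}(L_1, (\phi_s)_{s\in[0,t]}) - \mathrm{DMI}(L_2, (\phi_s)_{s\in[0,t]})| < 10d
\]
for every $t > 0$, with $10d$ independent of $t$.

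Finally, dividing by $t$ and letting $t \to +\infty$, the right-hand side tends to $0$. Hence if the limit defining $\mathrm{DMI}_\infty(L_1, (\phi_t))$ exists, so does that defining $\mathrm{DMI}_\infty(L_2, (\phi_t))$, and the two limits coincide. As $L_1, L_2 \in \Lambda_x$ were arbitrary, this is the claim. There is no real obstacle here, since both ingredients are already at our disposal; the only thing to notice is that the uniform-in-$t$ character of both bounds is what allows the Cesàro-type passage to the limit.
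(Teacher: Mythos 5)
Your proof is correct and is exactly the paper's argument: the paper dispatches this corollary with the single line ``From Corollary \ref{comparison DAMI et DMI} and Proposition \ref{indep lagrangian sub}, we deduce the following result,'' and your triangle-inequality combination, yielding a uniform-in-$t$ bound of $10d$ on $|\mathrm{DMI}(L_1,(\phi_s)_{s\in[0,t]})-\mathrm{DMI}(L_2,(\phi_s)_{s\in[0,t]})|$ which vanishes after dividing by $t$, is precisely what that line means when unpacked. Your closing remark that the key point is the \emph{uniformity} in $t$ of both bounds is exactly the right thing to flag.
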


Moreover, the following holds.
\begin{cor}
Let $(\phi_{1,t})_t, (\phi_{2,t})_t$ be two isotopies of conformally symplectic diffeomorphisms of $\M$ such that $\phi_{1,0}=\phi_{2,0}=\mathrm{Id}_{\M}, \phi_{1,1}=\phi_{2,1}$ and $\phi_{i,1+t}=\phi_{i,t}\circ\phi_{i,1}$ for $i=1,2$.
	Then for every $x\in\M$, whenever the limit exists,
	$$
	\text{DMI}_\infty(x,(\phi_{1,t}))=	\text{DMI}_\infty(x,(\phi_{2,t}))\, .
	$$
\end{cor}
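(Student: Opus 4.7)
The plan is to reduce this corollary to the angular-Maslov counterpart already proved in Proposition \ref{independence isotopy}, via the comparison between the two indices given in Corollary \ref{comparison DAMI et DMI}.

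First I would fix $x \in \M$ and a Lagrangian subspace $L \in \Lambda_x$, and invoke Corollary \ref{comparison DAMI et DMI} to get the uniform bound
\[
\bigl|\text{D}\alpha\text{MI}(L,(\phi_{i,s})_{s\in[0,t]})-\text{DMI}(L,(\phi_{i,s})_{s\in[0,t]})\bigr| < d
\]
for every $t > 0$ and each $i = 1,2$. Since the right-hand side is a constant independent of $t$, dividing by $t$ and letting $t \to +\infty$ shows that the asymptotic angular Maslov index $\mathrm{D}\alpha\mathrm{MI}_\infty(x, (\phi_{i,t}))$ exists if and only if the asymptotic Maslov index $\mathrm{DMI}_\infty(x, (\phi_{i,t}))$ does, and when they exist they coincide. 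This is the statement already highlighted at the end of Corollary \ref{comparison DAMI et DMI}.

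Next I would apply Proposition \ref{independence isotopy} to the common time-one map $\phi := \phi_{1,1} = \phi_{2,1}$. Both isotopies $(\phi_{1,t})_t$ and $(\phi_{2,t})_t$ extend $\phi$ and satisfy the iteration relation $\phi_{i,1+t} = \phi_{i,t}\circ\phi_{i,1}$, which is exactly the hypothesis under which the asymptotic angular index $\mathrm{D}\alpha\mathrm{MI}_\infty(x, \phi)$ is declared in that proposition to be isotopy-independent. Therefore
\[
\mathrm{D}\alpha\mathrm{MI}_\infty(x,(\phi_{1,t})) = \mathrm{D}\alpha\mathrm{MI}_\infty(x,(\phi_{2,t}))\,.
\]

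Combining the two previous steps: whenever $\mathrm{DMI}_\infty(x,(\phi_{1,t}))$ and $\mathrm{DMI}_\infty(x,(\phi_{2,t}))$ exist, each of them equals the corresponding asymptotic angular index, and these two angular indices are equal. This yields the desired equality. There is no real obstacle here; the whole proof amounts to stringing together Corollary \ref{comparison DAMI et DMI} and Proposition \ref{independence isotopy}, and the only mild point to check is that the additive constant $d$ in the comparison estimate does not depend on $t$ so that it indeed disappears in the Cesàro limit.
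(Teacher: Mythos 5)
Your argument is exactly the route the paper has in mind: the corollary is stated without explicit proof immediately after Proposition \ref{independence isotopy} and Corollary \ref{comparison DAMI et DMI}, and the intended deduction is precisely the chain you spell out (pass to the angular index via the uniform $d$-bound, use isotopy-independence of $\mathrm{D}\alpha\mathrm{MI}_\infty$, then pass back). Your proposal is correct and follows the paper's approach.
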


%
%

 \section{Applications and proofs of main outcomes}\label{section applications}

This section is devoted to the proofs of the main consequences presented in the introduction and further interesting applications.
%
%
%
\subsection{Proof of Corollary \ref{cormeasurebis}}

Let $(\phi_t)_{t\in\R}$ be a conformally symplectic isotopy of ${\cal M}$ such that $\phi_0=\mathrm{Id}_{\cal M}$ and $\phi_{t+1}=\phi_t\circ\phi_1$.
Let $\cl\subset {\cal M} $ be a Lagrangian submanifold   that is Hamiltonianly isotopic to a graph and such that $\displaystyle{\bigcup_{t\in[0, +\infty)}\phi_t(\cl)}$ is relatively compact.\\
More precisely, let $\cl_0\subset\M$ be a Lagrangian graph and let $(h_t)_{t\in[0,1]}$ be a Hamiltonian isotopy such that $h_0=\mathrm{Id}_{\M}$ and $h_1(\cl_0)=\cl$. Let $\alpha:[0,1]\to[0,1]$ be a smooth non-decreasing function such that $\alpha(0)=0$ and $\alpha$ is constant equal to $1$ when restricted to some neighborhood of $1$. {Let $\beta:[0,1]\to[0,1]$ be a smooth non-decreasing function such that $\beta$ is constant equal to $0$ on some neighborhood of $0$ and equal to the identity on some neighborhood of $1$.} Define then $(\psi_t)_{t\in[0,+\infty)}$ as
\[
{\psi_t:=\begin{cases}
h_{\alpha(t)}\quad \text{for }t\in\left[0,1\right]\, , \\
\\
\phi_{\beta(t-1)}\circ h_1\quad\text{for }t\in[1,2]\, , \\
\\
\phi_{t-2}\circ h_1\quad \text{for }t\in[2,+\infty)\, .
\end{cases}}
\]
Then $(\psi_t)
$ is an isotopy of conformally symplectic diffeomorphisms such that $\psi_0=\mathrm{Id}_\M$ { and $\psi_t(\cl_0)=\phi_{t-2}(\cl)$ for $t\geq 2$}.

Applying then Theorem \ref{Tppal} to the Lagrangian graph $\cl_0$ with respect to the isotopy $(\psi_t)
$, for every $t\in[1,+\infty)$ there exists at least a point $x_t\in\cl_0$ such that
\begin{equation}\label{thm 1.1 a L0}
\mathrm{DMI}(T_{x_t}\cl_0, (\psi_s)_{s\in[0,t]})=0\, .
\end{equation}
By compactness of $\cl_0$, by the relation between $\mathrm{DMI}$ and $\mathrm{D}\alpha\mathrm{MI}$ (see  { Corollary} \ref{comparison DAMI et DMI}) and by the continuity of the angular Maslov index, there exists a constant $C>0$ such that for every $x\in\cl_0$ it holds
\begin{equation}\label{control over hs}
\vert \mathrm{DMI}(T_x\cl_0, (\psi_t)_{t\in[0,1]})\vert \leq C\, .
\end{equation}
From \eqref{thm 1.1 a L0} and \eqref{control over hs}, for every $t\in[0,+\infty)$ we have then a point $z_t:= h_1(x_{t+1})\in h_1(\cl_0)=\cl$ such that
\begin{equation}\label{control over phis}
\vert \mathrm{DMI}(T_{z_t}\cl, (\phi_s)_{s\in[0,t]})\vert \leq C\, .
\end{equation}
Consider then the sequence $(z_n)_{n\in\N}$ in $\cl$. For every $n\in\N$, we define the following probability measure on $\Lambda(\M)$:
\[
\mu_n := \dfrac{1}{n}\sum_{i=0}^{n-1} \delta_{D\phi_i(T_{z_n}\cl)}\, ,
\]
where $\delta_*$ is the Dirac measure supported on $*\in\Lambda(\M)$.  Since $\bigcup_{t\in[0,+\infty)}\phi_t(\cl)$ is relatively compact, we can extract a subsequence $(\mu_{n_k})_{k\in\N}$ that converges to a probability measure $\bar\mu$ on $\Lambda(\M)$. The measure $\bar\mu$ is $D\phi_1$-invariant. The projected measure 
{$\mu:=p_*\bar\mu$} is a $\phi_1$-invariant probability measure on $\M$.

By Corollary \ref{comparison DAMI et DMI} it holds
\[
\mathrm{DMI}(\mu,(\phi_t)
)=
\]
\[
\int_{\M}\mathrm{DMI}_\infty(x, (\phi_t)
)\, d\mu(x) = \int_\M \mathrm{D}\alpha\mathrm{MI}_\infty(x, (\phi_t)
)\, d\mu(x) \, .
\]
Since the asymptotic angular Maslov index does not depend on the chosen Lagrangian subspace, we have that
\[
\int_\M \mathrm{D}\alpha\mathrm{MI}_\infty(x, (\phi_t)
)\, d\mu(x)=\int_{\Lambda(\M)} \mathrm{D}\alpha\mathrm{MI}_\infty(p(L), (\phi_t)
)\, d\bar\mu(L)\, .
\]
Birkhoff's Ergodic Theorem, applied at the function $L\mapsto \mathrm{D}\alpha\mathrm{MI}(L,(\phi_t)_{t\in[0,1]})$ and at the probability measure $\bar\mu$ on $\Lambda(\M)$, assures us that

\[
\int_{\Lambda(\M)} \mathrm{D}\alpha\mathrm{MI}_\infty(p(L), (\phi_t)
)\, d\bar\mu(L)=\int_{\Lambda(\M)}\mathrm{D}\alpha\mathrm{MI}(L,(\phi_t)_{t\in[0,1]})\, d\bar\mu (L)\, .
\]
Since $(\mu_{n_k})_{k\in\N}$ converges to $\bar\mu$, it holds
\[\begin{split}
\int_{\Lambda(\M)}\mathrm{D}\alpha\mathrm{MI}(L,(\phi_t)_{t\in[0,1]})\, d\bar\mu (L)&=\lim_{k\to+\infty}\dfrac{1}{n_k}\sum_{i=0}^{n_k-1}\mathrm{D}\alpha\mathrm{MI}(D\phi_i(T_{z_{n_k}}\cl),(\phi_t)_{t\in[0,1]})\\
&=
\lim_{k\to+\infty}\dfrac{1}{n_k}\mathrm{D}\alpha\mathrm{MI}(T_{z_{n_k}}\cl,(\phi_t)_{t\in[0,n_k]})\, .
\end{split}
\]
From \eqref{control over phis} and from Corollary \ref{comparison DAMI et DMI}, we have that for every $k\in\N$
\[\vert \mathrm{D}\alpha\mathrm{MI}(T_{z_{n_k}}\cl,(\phi_t)_{t\in[0,n_k]})\vert \leq C+d\, .\] Thus, we conclude that
\[
\mathrm{DMI}(\mu,(\phi_t)
)=\lim_{k\to+\infty}\dfrac{1}{n_k}\mathrm{D}\alpha\mathrm{MI}(T_{z_{n_k}}\cl,(\phi_t)_{t\in[0,n_k]})=0\, ,
\]
as required. Observe that the support of the measure $\mu$ is contained in 
{$$\displaystyle{\bigcap_{T\in [0, +\infty)}\overline{\bigcup_{t\in[T, +\infty)}\phi_t(\{ z_n :\ n\in\N \})}}\subset \displaystyle{\bigcap_{T\in [0, +\infty)}\overline{\bigcup_{t\in[T, +\infty)}\phi_t(\cl)}}\, .$$}

Let now $(\phi_t)
$ be a conformally symplectic flow on $\M$. We can then consider for every $t\in[0,+\infty)$ the measure on $\Lambda(\M)$
\begin{equation}\label{def mu_t}
\mu_t:=[z_t]_t=\frac 1 t \int_0^t\delta_{D\phi_s(T_{z_t}\cl)}\, ds\, .
\end{equation}
Observe, as before, that, from the choice of $z_t$, for every $t$ it holds
\begin{equation}\label{property z_t}
\vert \mathrm{D}\alpha\mathrm{MI}(T_{z_t}\cl,(\phi_s)_{s\in[0,t]})\vert \leq C+d\, .
\end{equation}
Consider then an accumulation point $\bar \mu$ of $(\mu_t)_{t\in[0,+\infty)}$ in the space of measure on $\Lambda(\M)$, which exists because $\bigcup_{t\in[0,+\infty)}\phi_t(\cl)$ is relatively compact. More precisely, let $(t_n)_{n\in\N}$ be a sequence such that $t_n\to+\infty$ and $\mu_{t_n}\rightharpoonup\bar\mu$ as $n\to+\infty$.
The measure $\bar\mu$ is $(D\phi_t)$-invariant. The projection {$\mu=p_*\bar\mu$} 
is then a $\phi_t$-invariant measure on $\M$. 

{We denote by $F$ the derivative of the function $\Delta$ that we introduced in section \ref{angular MI} in the direction of the vectofield $\chi$, where $\chi$ is the vectorfield associated to the flow $(D\phi_s):\Lambda({\cal M})\righttoleftarrow$. Then,}
for every $L\in\Lambda(\M)$ and every $t$ it holds}
\[
\mathrm{D}\alpha\mathrm{MI}(L,(\phi_s)_{s\in[0,t]})=\int_0^t F\circ D\phi_s(L)\, ds\, .
\]
{By Birkhoff Ergodic Theorem for flows (see \cite[Page 459]{NemSte60}), the following integral exists $\overline\mu$ almost everywhere 

}
\[
\bar F(L):=\lim_{t\to+\infty}\dfrac 1 t \int_0^t F\circ D\phi_s(L)\, ds=\mathrm{D}\alpha\mathrm{MI}_\infty(p(L),(\phi_t)
)\, ,
\]
{ and we have}
\[
\int_{\Lambda(\M)}\bar F(L)\, d\bar \mu(L) =\int_{\Lambda(\M)}F(L)\, d\bar \mu(L)\, .
\]
Following then the same calculus as for the previous case, it holds
\[\begin{split}
\mathrm{DMI}(\mu,(\phi_t)
)&= \int_\M \mathrm{DMI}_\infty (x,(\phi_t)
)\, d\mu(x) \\
&
=\int_\M\mathrm{D}\alpha\mathrm{MI}_\infty(x, (\phi_t)
)\, d\mu(x)\\
&=\int_{\Lambda(\M)}\bar F(L)\, d\bar\mu(L)=\int_{\Lambda(\M)}F(L)\, d\bar\mu(L)\, .
\end{split}\]
Since $\bar\mu=\lim_{n\to\infty}\mu_{t_n}$, because of \eqref{def mu_t} and from \eqref{property z_t}, we have that
\[\begin{split}
\int_{\Lambda(\M)}F(L)\, d\bar\mu(L)&=\lim_{n\to+\infty}\dfrac{1}{t_n}\int_0^{t_n}F\circ D\phi_s(T_{z_{t_n}}\cl)\, ds\\
&= \lim_{n\to+\infty}\dfrac{\mathrm{D}\alpha\mathrm{MI}(t_{z_{t_n}}\cl, (\phi_s)_{s\in[0,t_n]})}{t_n}=0\, .
\end{split}
\]
Thus, we conclude that $\mathrm{DMI}(\mu,(\phi_t)
)=0$, as desired.

%

\subsection{Proof of Corollary \ref{measuretorus}}

Let $(\phi_t)
$ be a   symplectic isotopy of $\T^{2d}$ such that $\phi_0=\mathrm{Id}_{\T^{2d}}$ and $\phi_{t+1}=\phi_t\circ\phi_1$. Using a covering $\Pi:T^*\T^d\to\T^{2d}$, we can lift the symplectic isotopy $(\phi_t)
$ on $\T^{2d}$ to a symplectic isotopy $(\Phi_t)
$ on $T^*\T^d$ such that for every $t\in\R$
\[
\Pi\circ \Phi_t=\phi_t\circ\Pi\, .
\]
Let $\cz_0\subset T^*\T^d$ be the zero section, which is a Lagrangian submanifold. By Theorem \ref{Tppal} for every $n\in\N$ there exists a point $u_n\in\cz_0$ such that
\[
\mathrm{DMI}(T_{u_n}\cz_0, (\Phi_t)_{t\in[0,n]})=0\, .
\]
Since the covering $\Pi$ is a submersion, for every $L\in\Lambda(T^*\T^d)$ we have
\[
\mathrm{DMI}(D\Pi(L), (\phi_t)_{t\in[0,1]})=\mathrm{DMI}(L,(\Phi_t)_{t\in[0,1]})\, ,
\]
where the Maslov index in $T^*\T^d$ is calculated with respect to the vertical Lagrangian foliation $\cv$ whose associated tangent bundle is $T_xT^*\T^d$, while the Maslov index in $\T^{2d}$ is calculated with respect to the image foliation $\Pi(\cv)$. Observe that the tangent bundle associated to $\Pi(\cv)$ is $\mathrm{ker}\,(dp_1)$, where $p_1:\T^{2d}\to \T^d$ is the projection of the first $d$-coordinates.

For every $n\in\N$ define then $U_n:=D\Pi(T_{u_n}\cz_0)\in\Lambda(\T^{2d})$ and the probability measure on $\Lambda(\T^{2d})$
\[
\mu_n:=\frac 1 n \sum_{i=0}^{n-1}\delta_{D\phi_i(U_n)}\, .
\] 
Since $\Lambda(\T^{2d})$ is compact, we can extract from $(\mu_n)_{n\in\N}$ a subsequence converging to a $D\phi_1$-invariant probability measure $\bar\mu$ on $\Lambda(\T^{2d})$. Using the projection $p:\Lambda(\T^{2d})\to\T^{2d}$ and repeating the calculus done in the proof of Corollary \ref{cormeasurebis}, the $\phi_1$-invariant probability measure { $\mu=p_*\bar\mu$} on $\T^{2d}$ is then such that
\[
\mathrm{DMI}(\mu,(\phi_t)
)=0\, .
\]

\subsection{Existence of points and ergodic measures with vanishing asymptotic Maslov index for conformally symplectic isotopies that twist the vertical}
In this subsection we are mainly concerned with the proof of Theorems \ref{existencepoints0DMIbis} and \ref{existencepoints0DMItris}. Let us first recall that, in Proposition \ref{PtwistMAslov}, we prove that, for an isotopy $(\phi_t)_{t\in\R}$ of conformally symplectic diffeomorphisms of $\M$ that twists the vertical, for every $L\in\Lambda(\M)$ and every $[\alpha,\beta]\subset\R$ such that $D\phi_\alpha(L),D\phi_\beta(L)\notin\Sigma(\M)$ it holds $$\mathrm{DMI}(L,(\phi_t)_{t\in[\alpha,\beta]})\leq 0\, .$$
Consequently, for every $x\in \M$ we have
\[
\mathrm{DMI}_\infty(x,(\phi_t)_{t\in[0,+\infty)})\leq 0\, .
\]
Moreover, from Corollary \ref{comparison DAMI et DMI}, we deduce that, for an isotopy $(\phi_t)_{t\in\R}$ of conformally symplectic diffeomorphisms on $\M=T^*M$ that twists the vertical, for every $L\in\Lambda$ and every $x\in \M$ it holds
\[
\mathrm{D}\alpha\mathrm{MI}(L, (\phi_t)_{t\in[0,1]})< d\quad\text{and}\quad \mathrm{D}\alpha\mathrm{MI}_\infty(x,(\phi_t)_{t\in[0,+\infty)})\leq 0\, ,
\]
where $d=\mathrm{dim}(M)$.


\begin{proof}[Proof of Theorem \ref{existencepoints0DMIbis}]
Let $(\phi_t)_{t\in\R}$ be a conformally symplectic isotopy of ${\cal M}$ such that $\phi_0=\mathrm{Id}_{\cal M}$. Let $\cl\subset {\cal M} $ be a Lagrangian submanifold   that is Hamiltonianly isotopic to a graph.
Let $\cl_0\subset\M$ be a Lagrangian graph and let $(h_t)_{t\in[0,1]}$ be a Hamiltonian isotopy such that $h_0=\mathrm{Id}_{\M}$ and $h_1(\cl_0)=\cl$. Let $\alpha:[0,1]\to[0,1]$ be a smooth non-decreasing function such that $\alpha(0)=0$ and $\alpha$ is constant equal to $1$ when restricted to some neighborhood of $1$. {Let $\beta:[0,1]\to[0,1]$ be a smooth non-decreasing function such that $\beta$ is constant equal to $0$ on some neighborhood of $0$ and equal to the identity on some neighborhood of $1$.} Define then $(\psi_t)_{t\in[0,+\infty)}$ as
\[
{\psi_t:=\begin{cases}
h_{\alpha(t)}\quad \text{for }t\in\left[0,1\right]\, , \\
\\
\phi_{\beta(t-1)}\circ h_1\quad\text{for }t\in[1,2]\, , \\
\\
\phi_{t-2}\circ h_1\quad \text{for }t\in[2,+\infty)\, .
\end{cases}}
\]
Then $(\psi_t)_{t\in[0,+\infty)}$ is an isotopy of conformally symplectic diffeomorphisms such that $\psi_0=\mathrm{Id}_\M$.	

Apply then Theorem \ref{Tppal} to the Lagrangian graph $\cl_0$ with respect to the isotopy $(\psi_t)_{t\in[0,+\infty)}$. That is, for every $t\in[0,+\infty)$ there exists at least a point $z_t\in \cl_0$\footnote{Actually there exists an open set whose projection on $M$ has full Lebesgue measure.} such that
\begin{equation}\label{0 value}
\mathrm{DMI}(T_{z_t}\cl_0,(\psi_s)_{s\in[0,t]})=0\, .
\end{equation}
From \eqref{0 value} and from the compactness of $\{ h_s(\cl_0) :\ s\in[0,1] \}$, there exists an integer $\rho>0$ such that for every $t\in[0,+\infty)$ there exists a point $x_t:=\psi_1(z_{t+1})=h_1(z_{t+1})\in \cl$ such that
$$
\mathrm{DMI}(T_{x_t}\cl,(\phi_s)_{s\in[0,t]})\in[-\rho,\rho]\, .
$$
{Moreover, as $(\phi_s)$ twists the vertical, we have in fact

\begin{equation}\label{choice of zt}
\mathrm{DMI}(T_{x_t}\cl,(\phi_s)_{s\in[0,t]})\in[-\rho,0]\, .
\end{equation}
}
By compactness of $\cl$, we can extract from $(x_t)_{t\in[0,+\infty)}$ a subsequence $(x_n)_{n\in\N}$ which converges to a point $ x\in\cl$.

Fix $N\in\N$ and $\epsilon>0$. By continuity of the angular Maslov index, there exists $\bar n\in\N$ such that for every $n\geq\bar n$ it holds
\begin{equation}\label{continuity DAMI}
\Big\vert \mathrm{D}\alpha\mathrm{MI}(T_{ x}\cl, (\phi_s)_{s\in[0,N]}) - \mathrm{D}\alpha\mathrm{MI}(T_{x_n}\cl, (\phi_s)_{s\in[0,N]}) \Big\vert<\epsilon\, .
\end{equation}
Since the isotopy twists the vertical, we claim that, for every $n\geq \max (\bar n, N)$, it holds
\begin{equation}\label{control aussi avant}
\mathrm{DMI}(T_{x_n}\cl,(\phi_s)_{s\in[0,N]})\in[-\rho, 0]\, .
\end{equation}
Indeed, if this does not hold, then for some $n\geq \max(\bar n, N)$ from Proposition \ref{PtwistMAslov} we have that
\[
\mathrm{DMI}(T_{x_n}\cl,(\phi_s)_{[0,N]})\leq -\rho-1\, .
\]
From \eqref{choice of zt} and since
\[
\mathrm{DMI}(T_{x_n}\cl,(\phi_s)_{[0,n]})=\mathrm{DMI}(T_{x_n}\cl,(\phi_s)_{[0,N]})+\mathrm{DMI}(D\phi_N(T_{x_n}\cl),(\phi_s)_{s\in[0,n-N]})\, ,
\]
we contradict Proposition \ref{PtwistMAslov} because
\[\mathrm{DMI}(T_{\phi_N(x_n)}\phi_N(\cl),(\phi_s)_{s\in[0,n-N]})\geq 1\, .\]
From \eqref{continuity DAMI}, \eqref{control aussi avant} and Corollary \ref{comparison DAMI et DMI}, we have that for every $n\geq\max(\bar n, N)$
\[\begin{split}
\Big\vert \mathrm{D}&\alpha\mathrm{MI}(T_{x}\cl,(\phi_s)_{s\in[0,N]}) \Big\vert\\
&\leq\Big\vert \mathrm{D}\alpha\mathrm{MI}(T_{ x}\cl,(\phi_s)_{s\in[0,N]}) -\mathrm{D}\alpha\mathrm{MI}(T_{x_n}\cl, (\phi_s)_{s\in[0,N]})\Big\vert+ \Big\vert \mathrm{D}\alpha\mathrm{MI}(T_{x_n}\cl, (\phi_s)_{s\in[0,N]})\Big\vert\\
&<\epsilon+\rho+d\, ,
\end{split}
\]
where $d=\mathrm{dim}(M)$. Letting $\epsilon\to 0$ and again by Corollary \ref{comparison DAMI et DMI}, for every $t\in[0,+\infty)$ we conclude that
\[
\mathrm{DMI}(T_x\cl,(\phi_s)_{s\in[0,t]})\in[-C,C]\, ,
\]
where $C:= \rho+2d$. In particular, we deduce also that $\mathrm{DMI}_\infty(x,(\phi_t)_{t\in[0,+\infty)})=0$.

\end{proof}

\begin{proof}[Proof of Theorem \ref{existencepoints0DMItris}]
{	Let $(\phi_t)$ be an isotopy of conformally symplectic diffeomorphisms of $\M$ such that $\phi_{1+t}=\phi_t\circ\phi_1$. Observe that, if $(\phi_t)$ twists the vertical, then, by Proposition \ref{PtwistMAslov}, for every 
$\phi_1$-invariant measure with compact support $\mu$ it holds
	\begin{equation}\label{eq mes twist}
	\mathrm{DMI}(\mu,(\phi_t)
	)=\int_\M \mathrm{DMI}_\infty(x,(\phi_t)
	)d\mu(x) \leq 0\, .
	\end{equation}
	{ As the function $\mathrm{DMI}(.,(\phi_t)
	)$ is measurable and non-positive, this implies that
	$\mathrm{DMI}(.,(\phi_t)_{t\in[0,+\infty)})\in L^1(\mu)$.}\\
Let $x\in\cl$ be the point given by Theorem \ref{existencepoints0DMIbis}. The assumption that its positive orbit is relatively compact enables us to find a $\phi_1$-invariant measure $\mu$ supported on the closure of the orbit of $x$ with vanishing asymptotic Maslov index.\\
By Ergodic Decomposition Theorem (see \cite{Mane1987}), {for $\mu$ almost every $y$, the measure 
$$\mu_y=\lim_{N\to\infty}\frac{1}{N}\sum_{n=0}^N\delta_{\phi_n(y)}$$ exists and is ergodic, we have $\mathrm{DMI}(.,(\phi_t)
)\in L^1(\mu_y)$ and 
\[
0=\mathrm{DMI}(\mu,(\phi_t)
)=\int_{\M}\mathrm{DMI}(\mu_y,(\phi_t)
)d\mu(y)\, .
\] 
As the function in the integral is non-positive by \eqref{eq mes twist}, we deduce that for $\mu$ almost every $y$, the measure $\mu_y$ is ergodic and has vanishing Maslov index.}

}
\end{proof}

\subsection{Autonomous and 1-periodic Tonelli Hamiltonian flow case}
We can consider the particular case of a Hamiltonian 1-periodic Tonelli flow on a cotangent bundle $T^*M$, where $M$ is a $d$-dimensional compact manifold. More precisely, let $H:T^*M\times \R/\Z\to \R$ be a Tonelli $1$-periodic Hamiltonian. Denote as $(\phi^H_{s,t})$ the family of symplectic maps generated by the Hamiltonian vector field of $H$.

Using Weak KAM Theory, we can easily obtain Theorem \ref{existencepoints0DMIbis} for a Lagrangian graph. 
More precisely, let $\cl\subset T^*M$ be a Lagrangian graph, that is there exists a $C^{1,1}$ function $u:M\to \R$ such that $\cl=\mathrm{graph}\,du$. Then, the existence of a point $x\in\cl$ with zero asymptotic Maslov index can be deduced from Weak KAM theory. Indeed, let $v:M\to\R$ be a Weak KAM solution of positive type. In particular, $v$ is semiconvex.
Consider then the function $v-u$, which is still semiconvex. Let $x_0\in M$ be a local maximum of the function $v-u$. Then, since $v-u$ is semiconvex and $x_0$ is a local maximum, actually the function $v-u$ is differentiable at $x_0$. We deduce that $dv(x_0)=du(x_0)$. Consequently, the Lagrangian submanifold $\cl$ intersects the partial graph of $dv$ in $du(x_0)$. Since $v$ is a weak KAM solution of positive type, the orbit of a point lying in the partial graph of $dv$ is minimizing on every interval $[0,t]$, for $t>0$. In particular, the point $du(x_0)$ does not have conjugate points in the future. This implies that the Maslov index on every interval $[0,t]$ at $du(x_0)$ is zero, and so $du(x_0)\in\cl$ has zero asymptotic Maslov index.

Recall that an autonomous Tonelli Hamiltonian flow provides an isotopy of symplectic diffeomorphisms that twists the vertical, see Proposition \ref{PHconvextwist}. By Theorem \ref{existencepoints0DMItris} there exists then an ergodic invariant measure of vanishing asymptotic Maslov index. In the case of an autonomous Tonelli Hamiltonian flow on $T^*M$, we can characterise the invariant measure of vanishing Maslov index given by Theorem \ref{existencepoints0DMItris}. That is, the given invariant measure is actually a Mather minimizing measure, as stated in Corollary \ref{cor on minimizing measure}.
\begin{proof}[Proof of Corollary \ref{cor on minimizing measure}]
	Indeed it can be proved that the graph selector is unique (see e.g. \cite{ArnaVent2017}). In this case, the graph selector can be built by using the Lax-Oleinik semi-group, {\color{black}see \cite{Jou91} or \cite{Wei14}}. Fathi, \cite{Fathi2008}, proved the convergence of the Lax-Oleinik semi-group (that is the graph selectors in our case)  to a weak KAM solution. Arnaud proved  in \cite{Arnaud2005} that the resulting pseudographs converge to the pseudograph of a weak KAM solution for the Hausdorff distance. Hence the supports of measures  that are given by the  last theorem are in the pseudograph of a weak KAM solution and thus minimizing, { see (3.14) in \cite{Bernard08}}.
\end{proof}

\subsection{Proof of Corollary \ref{hausdorff dimension}} { We endow $M$ with a Riemannian metric. 

} 
{
	We are assuming that \begin{equation}\label{cor dim hau hyp}\forall (\lambda_1,\dots,\lambda_n)\in\R^n\setminus\{0_{\R^n}\}, \forall q\in M\quad\text{it holds}\quad \sum_{k=1}^n\lambda_k\eta_k(q)\neq 0\, .\end{equation}
{ This implies that the  map $I$
from $M\times \R^n$ to $\M=T^*M$ that is defined by
$$I(q, \lambda_1, \dots, \lambda_n)=\sum_{k=1}^n\lambda_k\eta_k(q)$$ is a bi-Lipschitz embedding. Indeed, it is a fibered linear monomorphism  from $M\times \R^n$ to $T^*M$ that continuously  depends on the point $q\in M$. We denote by $\cal Q\subset \M$ its image $I(M\times \R^n)$. Then the map $j:\cal Q\to\R^n$ that is defined by $$j\big(\sum_{k=1}^n\lambda_k\eta_k(q)\big)=(\lambda_1, \dots, \lambda_n)$$ is Lipschitz.}

For every $(\lambda_1,\dots,\lambda_n)\in\R^n$ we consider the Lagrangian graph $\cl_{(\lambda_1,\dots,\lambda_n)}:=\{
	\sum_{k=1}^n\lambda_k\eta_k(q) :\ q\in M\}\subset \M$. As $(\phi_t)$ is an isotopy of conformally symplectic diffeomorphisms that twists the vertical, from Theorem \ref{existencepoints0DMIbis}, there exists at least one point $x\in\cl_{(\lambda_1,\dots,\lambda_n)}$ with zero asymptotic Maslov index. In particular,{ 
	\[
	 j\left( \{ p\in\cal Q :\ \mathrm{DMI}_\infty(p,(\phi_t)_{t\in[0,+\infty)})=0 \} \right)= \R^n\, .
	\]
	Because $j$ is Lipschitz, this implies that}
	\[
	\mathrm{dim}_H\Big(\{ p\in \M :\ \mathrm{DMI}_\infty(p,(\phi_t)_{t\in[0,+\infty)})=0 \}\Big)\geq n\, .
	\]
	

	



}

\bibliographystyle{alpha}
\bibliography{Biblio.bib}

\end{document}